\documentclass[10pt]{article}
\usepackage{amsmath}
\usepackage{amsfonts}
\usepackage{amssymb}
\usepackage[T1]{fontenc}
\usepackage[utf8]{inputenc}
\usepackage{graphicx}
\usepackage{amsthm}
\usepackage{hyperref}
\usepackage{xcolor}
\usepackage{accents}
\usepackage{graphics,wrapfig, graphicx, mathrsfs,xcolor}
\usepackage{mathtools}
\usepackage{amssymb,amscd,graphicx,xcolor,subfig,tikz}
\usepackage{graphics}
\usepackage{indentfirst}
\usepackage{bm, enumerate,xcolor}
\usepackage[dvips]{epsfig}
\usepackage{latexsym}
\usepackage{float}
\usepackage{amsmath}
\usepackage[numbers,sort&compress,square]{natbib}

\usepackage[english]{babel}
\usepackage [autostyle, english = american]{csquotes}
\MakeOuterQuote{"}

\numberwithin{equation}{section}
\theoremstyle{plain}
\newtheorem*{theorem*}{Theorem}
\newtheorem*{lemma*}{Lemma}
\newtheorem*{corollary*}{Corollary}

\newtheorem{lemma}{Lemma}[section]
\newtheorem{corollary}[lemma]{Corollary}

\newtheorem{proposition}[lemma]{Proposition}

\theoremstyle{definition}

\newtheorem{remark}[lemma]{Remark}

\newtheorem*{example*}{Example}
\newtheorem*{er*}{Examples and Remarks}

\usepackage{geometry}
\newcommand{\dthth}{\partial_{\theta \theta}} 
 
\newcommand{\dth}{\partial_{\theta}}

\newcommand{\drr}{\partial_{rr}} 
\newcommand{\dr}{\partial_{r}} 
 
\newcommand{\dt}{\partial_{t}} 
\newcommand{\dy}{\partial_{y}}

\begin{document}

 \title{ Strong Ill-posedness of the 2d Incompressible Euler Equation in Critical Besov Spaces.}
\author{Karim R. Shikh Khalil}
 
\maketitle
\abstract{We prove strong ill-posedness  of the 2d incompressible Euler Equation for velocity field in the critical Besov Spaces $B^{1}_{\infty, q}$ for $1<q<\infty$. }

\section{Introduction}

The incompressible Euler equation:
\begin{equation}\label{Euler}
\begin{split}
\partial_t u &+  u\cdot\nabla u +\nabla p= 0, \\ 
&\nabla \cdot u=0, \\  
\end{split}
\end{equation}
is a fundamental model in fluid dynamics that describes the motion of ideal incompressible fluids with zero viscosity. Here, $u$ is the velocity and $p$ is the pressure that enforces the incompressibility constraint. The challenge of analyzing the Euler equation comes from  the fact that the equation is nonlinear and nonlocal (due to the incompressibility condition).

In this work, we study the well/ill-posedness of the initial value problem of the two-dimensional Euler equation. The literature on the initial value problem for the Euler equation is vast. The discussion here is not exhaustive, but we will focus on the works most relevant to this paper. The study of well-posedness started with the results of Lichtenstein~\cite{LL} and Gunther~\cite{G} proving well-posedness in H\"older spaces $C^{k,\alpha}$. Then, in two dimensions, Wolibner~\cite{Wo} and H\"older~\cite{HO} proved global well-posedness. Subsequently, Ebin and Marsden~\cite{EbM} proved well-posedness in Sobolev spaces $H^s$ for $s > \frac{d}{2} + 1$ in a compact domain, allowing for boundary. Bourguignon and Brezis~\cite{BB} extended this result to the Sobolev space $W^{s,p}$ for $s > \frac{d}{p} + 1$. Kato~\cite{Ka2} proved well-posedness in $H^s$ for  $s > \frac{d}{2} + 1$ on $\mathbb{R}^d$, and then Kato and Ponce~\cite{KaP} extended this to $W^{s,p}$ for $s > \frac{d}{p} + 1$. For Besov spaces, Vishik~\cite{MV1} proved global well-posedness in the critical Besov space $B^{2/p+1}_{p,1}$ with $1 < p < \infty$. Chae~\cite{C} proved well-posedness in $B^{d/p+1}_{p,1}$ with $1 < p < \infty$. Pak and Park~\cite{PP1,PP2} proved well-posedness in Besov spaces $B^1_{\infty,1}$ and  $B^{d+1}_{1,1}$. We remark that in all of the previously mentioned results on Besov spaces, the gradient of the velocity is bounded.

Regarding the strong ill-posedness results, we will first focus on  two dimensions. Bourgain and Li~\cite{BL1} proved strong ill-posedness  in the critical $H^2$ Sobolev space. In fact, they proved strong
 ill-posedness in dimensions  $d=2, 3$ for the Sobolev space $W^{\frac{d}{p}+1,p}$ for any $1 < p < \infty$, and for the Besov space $B^{\frac{d}{p}+1}_{p,q}$ for any $1 < p < \infty$, $1 < q \leq \infty$. Independently, Elgindi and Masmoudi~\cite{EM} and  Bourgain and Li~\cite{BL1} proved strong ill-posedness in $C^1$ and for all integer spaces. Later on, Elgindi and Jeong   \cite{EJ1} gave a simpler proof of the Sobolev ill-posedness in two dimensions. Jeong~\cite{J} gave an example of continuous loss of supercritical regularity, where the unique solution is initially in $W^{1,p}$ for $1 \leq p<2$, but continuously loses $W^{1,q(t)}$ regularity, where  $q(t)<p$ is decreasing. C\'{o}rdoba, Mart\'{i}nez-Zoroa, and O\.{z}a\'{n}ski~\cite{CMO} constructed unique solutions that exhibit gap loss of supercritical Sobolev regularity, where the solutions are initially in $H^{\beta}$ for $1 < \beta < 2$, but instantly leave $H^{\beta'}$ for all $\beta'>1+\frac{(3-\beta)(\beta-1)}{2-(\beta-1)^2}$.

Thus far, we have discussed strong ill-posedness results in two dimensions.  Bourgain and Li~\cite{BL1,BL} and Elgindi and Masmoudi~\cite{EM} results also hold in dimensions higher than two.  There are earlier results on strong ill-posedness. DiPerna and Lions~\cite{DL}  proved strong ill-posedness in supercritical spaces $W^{1,p}(\mathbb{T}^3)$ for $1 \leq p < \infty$, based on $2\frac{1}{2}$-dimensional shear flows. In addition, Bardos and Titi~\cite{BT} used the $2\frac{1}{2}$-dimensional shear flows to prove strong ill-posedness in the H\"older space $C^{\alpha}$. Further, Misio\symbol{170}ek and Yoneda~\cite{MY} also used   the $2\frac{1}{2}$-dimensional shear flows to prove strong ill-posedness in the log-Lip type space $LL^{\alpha}$ for   $0 < \alpha \leq 1$. Recently, Luo~\cite{L} proved strong ill-posedness for   velocity   in supercritical Sobolev spaces $H^s$ for any $0 < s < \frac{5}{2}$. More recently, Jeong, Mart\'{i}nez-Zoroa, and O\.{z}a\'{n}ski~\cite{JMO} proved instantaneous and continuous loss of supercritical Sobolev regularity for vorticity in $H^s$ for any $s \in (0, \frac{3}{2})$. There are also strong  ill-posedness results in Lorentz spaces by Kim and Jeong~\cite{KJ}, and recently by   Bang and Cheskidov~\cite{BC}. 

Up to this point, we have discussed strong ill-posedness results. There are also ill-posedness results in the sense of Hadamard, with regard to the continuity of the solution map. See the work of Himonas and   Misio\symbol{170}ek~\cite{HM} in Sobolev $H^s$ spaces, the work of  Cheskidov and Shvydkoy~\cite{ChS} in Besov spaces, and the work of  Misio\symbol{170}ek and Yoneda~\cite{MY2} in H\"older and Besov spaces.

\subsection{Main Result}
In this work, we study the 2d Euler equation in the critical  Besov spaces $B^{1}_{\infty, q}$ for $1<q<\infty$. To give more intuition about  this Besov regularity and put it in context, especially  with regards to the 2d Euler equation, we consider the following. From classical estimates for Besov spaces, see Vishik~\cite{MV1} and Bahouri, Chemin, and Danchin~\cite{BCD}, we have the following inclusion:    $$ B^{1}_{\infty, q} \subset LL^{1-\frac{1}{q}},$$ for $1\leq q\leq \infty$, where  $LL^{\alpha}$, with   $0 \leq \alpha \leq 1 $, is the Log-Lip-type space which is defined as follows: $$|u|_{LL^{\alpha}}=|u|_{L^{\infty}}+\sup_{0<|x-y|\leq \frac{1}{2}}\frac{|u(x)-u(y)|}{|x-y|(1+ |\log(|x-y|)|)^{\alpha}}$$

For the 2d Euler equation,  the Besov   $B^{1}_{\infty, q}$ for $1<q<\infty$ describes a remaining gap in regularity between what is known regarding well/ill-posedness (well-posedness: $C^{1+\alpha}$, $B^{1}_{\infty, 1}$ and ill-posedness: $C^{1}$, Lip) and the regularity threshold for uniqueness $B^{1}_{\infty, \infty}/$Log-Lip beyond which uniqueness may not hold. In this work, we prove strong ill-posedness of 2d Euler equation in this critical Besov $B^{1}_{\infty, q}$ for $1<q<\infty$. When the velocity is in these lower Besov regularity,  $B^{1}_{\infty, q}$ for $1<q<\infty$, the vorticity does not have to be bounded, though we still have uniqueness from the work of  Vishik \cite{MV2}. In order to obtain our ill-posedness result, we study the Besov $B^{1}_{\infty, q}$ space  in more detail. Namely, we introduce a simple yet useful approximation scheme by H\"older-type functions, where we couple the H\"older regularity exponent with an appropriate rescaling. This allows us to have quantitative bounds on the Besov norms in terms of  the H\"older regularity exponent. Using this approximation approach,  we identify two   classes of velocities that have the same $B^{1}_{\infty, q}$ norm, but with different gradients. Then, we leverage  their corresponding vorticity with the  Euler  dynamic in order to obtain the ill-posedness result through a new mechanism (see Subsection \ref{SketchProof} for more details). To control the Euler dynamic, we use the Biot-Savart law decomposition by Elgindi~\cite{E} to derive a leading order model for 2d Euler equation, where we first prove the ill-posedness.  Then, using a perturbative argument, we prove this result for 2d Euler equation. For a sketch of the main ideas in the proof, see Subsection \ref{SketchProof}. 

Finally, we remark that this work answers, in two dimensions, the remaining Besov end point $p=\infty$ left open in the work of Bourgain and Li~\cite{BL1}, which is also Problem 7 in "Geometric Hydrodynamics in Open Problems" by Khesin, Misio\symbol{170}ek, and Shnirelman~\cite{KMS}. This is achieved through a new ill-posedness mechanism, and improved understanding of the Besov $B^{1}_{\infty, q}$ space.

 \textbf{Statement of the Main Result:}
  \begin{theorem*} (Strong ill-posedness in ${B^{1}_{\infty,q}}$): For any $\delta>0$,  $1<q<\infty$, and $0<\alpha<1$,  there exist initial velocity data  $u_0^{\delta,\alpha,q} \in C^{1,\alpha}(\mathbb{R}^2)$ such that the solution to the 2d Euler equation  satisfies the following: 
  
  $$|u_0|_{B^{1}_{\infty,q}}=\delta  \hspace{0.3 cm } \text{but}  \hspace{0.3 cm }  \sup_{0\leq t \leq T(\alpha)}|u (t)|_{B^{1}_{\infty,q}} \geq   c \log(c |\log(\alpha)|) $$ 
  
  where 
  $T(\alpha)=c{\alpha^{1-\frac{1}{q}} \log(c |\log(\alpha)|) }$ and c is independent of $\alpha$.  
  \end{theorem*}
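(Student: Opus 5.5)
We would build the initial vorticity from Elgindi's framework: in polar coordinates $(r,\theta)$, take $\omega_0 = r^{\alpha}$-type profiles, i.e. $\omega_0(r,\theta)=g_0(\theta)\,\Omega_0(R)$ with $R=r^{\alpha}$, localized by a smooth cutoff, and with $g_0$ odd-symmetric (e.g. $\sin(2\theta)$). Such data are $C^{\alpha}$ in vorticity, so $u_0\in C^{1,\alpha}$. The first step is the approximation scheme announced in the introduction: quantify the $B^1_{\infty,q}$ norm of velocities whose vorticity is homogeneous of degree $\alpha$ near the origin. Since the Littlewood--Paley pieces at frequency $2^j$ of such a function have size $\sim 2^{-j\alpha}$ (relative to the $L^\infty$ amplitude $A$ of $\nabla u$), we get $|u|_{B^1_{\infty,q}}^q\approx A^q\sum_j 2^{-jq\alpha}\approx A^q/(q\alpha)$. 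So normalizing the amplitude as $A\sim \delta\,\alpha^{1/q}$ gives $|u_0|_{B^1_{\infty,q}}=\delta$, while $|\nabla u_0|_{L^\infty}$ (more precisely, the size of the $L^\infty$-type quantity $\int_r^\infty \omega(s)\,ds/s$) is only $\sim\delta\alpha^{1/q}$. Conversely, a profile carrying an $L^\infty$ gradient of size $M$ concentrated over $N$ dyadic scales has Besov norm $\gtrsim M N^{1/q}$. The task is to have the dynamics create a velocity gradient that is large in this second sense.

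Next, we would use Elgindi's Biot--Savart decomposition: for $\omega=g(\theta)\Omega(R)$ with small $\alpha$, the stream function satisfies $\Psi \approx \frac{1}{4\alpha}L(\Omega)\sin(2\theta)+$ error, where $L(\Omega)(R)=\int_R^\infty \Omega(s)\,ds/s$. This yields the leading order model $\partial_t\Omega = \frac{1}{\alpha}\cdot(\text{const})\,L(\Omega)\,\Omega$ type transport in $R$ (after the $\theta$-dependence is fixed by symmetry). It is explicitly solvable: $L(\Omega)$ grows like $L_0/(1-ctL_0)$, or in the regime of interest logarithmically driven. With the amplitude $A\sim\delta\alpha^{1/q}$ the natural time scale is $\alpha^{1-1/q}$, and integrating the model over time $T(\alpha)=c\alpha^{1-1/q}\log(c|\log\alpha|)$ we expect the gradient factor $\exp(\int \nabla u)$ to amplify the deformation so that the velocity develops $O(1)$ gradient spread over $\sim\log(c|\log\alpha|)^{q}$ dyadic scales, or equivalently that $L(\Omega(t))$ grows to size $\log(c|\log \alpha|)$ on a range of $R$. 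Using the lower bound from the first step, this gives $|u(t)|_{B^1_{\infty,q}}\gtrsim c\log(c|\log\alpha|)$ for the model solution.

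Finally, a perturbative argument passes from the model to the full 2d Euler equation: write $\Omega=\Omega_{\text{model}}+\Omega_{\text{err}}$ and estimate $\Omega_{\text{err}}$ in a weighted $\mathcal{H}^k$-type space adapted to $R$ (as in Elgindi's work), using that the Biot--Savart remainder is $O(1)$ rather than $O(1/\alpha)$. Via Gronwall on $[0,T(\alpha)]$, the error is bounded by $C\alpha\,e^{C\int_0^T|L(\Omega)|/\alpha}$, and we need this to stay small, which is precisely where the $\log|\log\alpha|$ (rather than a power of $|\log\alpha|$) gain arises. The cutoff/compact support is handled similarly, since it contributes only smooth, bounded corrections. \textbf{The main obstacle} we expect is this step: the error control must survive a time over which the model amplifies by a factor $\sim|\log\alpha|^{c}$, so the constants in the Gronwall exponent must be tracked carefully against the $\alpha$-gain of the remainder. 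Alongside it, the sharp two-sided Besov estimates in the first step (uniform in $\alpha$) must be made precise enough to transfer the growth of $L(\Omega)$ into a lower bound on the $B^1_{\infty,q}$ norm.
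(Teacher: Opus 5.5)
Your proposal has a genuine gap at its core: nothing in it makes $|u(t)|_{B^1_{\infty,q}}$ grow.

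\textbf{Why the single-profile data cannot work.} With a single odd profile such as $\delta\alpha^{1/q}r^{\alpha}\sin(2\theta)$, the 2D vorticity is only transported. The model $\partial_t\Omega=\frac{c}{\alpha}L(\Omega)\Omega$ with $L\sim L_0/(1-ctL_0)$ is Elgindi's 3D vortex-stretching model, not a 2D one. Consequently $|\omega(t)|\le\delta\alpha^{1/q}r^{\alpha}e^{\alpha X}$ with $\alpha|X|\ll 1$ on your time scale. The strain $L^s_{12}$ also stays at its initial size $\lesssim\delta\alpha^{1/q-1}$ (Lemma~\ref{UpperBndL12}; in your $R$-normalization, $\lesssim\delta\alpha^{1/q}$), so it cannot grow to $\log(c|\log\alpha|)$.

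More decisively, oddness in $\theta$ is preserved (in the model and in Euler) while $X$ is even, so $L^c_{12}(\Omega(t))\equiv 0$. The velocity then consists only of the $L^s_{12}$-terms. These are exactly the terms $K_2,K_3$ that the paper shows stay $O(1)$ in $B^1_{\infty,q}$; all the growth comes from $K_1$, i.e.\ from $rL^c_{12}$, which your data never generate.

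\textbf{Why the conversion heuristic fails.} Second differences measure how much $\nabla u$ changes from one dyadic scale to the next, so $|u|_{B^1_{\infty,q}}\approx|\omega|_{B^0_{\infty,q}}$; they do not measure the size of $\nabla u$. A gradient of size $M$ over $N$ scales gives $\gtrsim MN^{1/q}$ only if it oscillates by $\sim M$ at every scale. Your own initial data is a counterexample. Its strain is $\sim\delta\alpha^{1/q-1}$ over $\sim 1/\alpha$ scales. Note that $\delta\alpha^{1/q}$ is the vorticity amplitude, not $|\nabla u_0|_{L^\infty}$; it is this large strain that produces your time scale $\alpha^{1-1/q}$. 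Yet its velocity, of the form $\alpha^{1/q-1}r(1-r^{\alpha})$, has Besov norm $O(\delta)$ by Lemma~\ref{Type1}.

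\textbf{The missing idea.} You need to combine a large non-oscillating gradient with an oscillating one of $O(1)$ Besov size. The paper takes
$\omega_0=r^{\alpha}\sin(\alpha^{1/q}\log r)+\alpha^{1/q}r^{\alpha}\sin(2\theta)$.
The radial part has $O(1)$ amplitude but only $O(1)$ Besov norm, because it varies slowly in $\log r$ (Lemma~\ref{Type2}). The second part supplies the hyperbolic strain.

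The strain deforms the circular level sets of the radial part, creating $\alpha^{1/q}X\,r^{\alpha}\cos(\alpha^{1/q}\log r)$ with $X=2\int_0^tL^s_{12}\cos(2\theta(\tau))\,d\tau$. This term has a nonzero $\cos(2\theta)$ projection, by positivity of $\int_0^{\pi/2}\cos(2\theta(\tau))\cos(2\theta)\,d\theta$. It therefore feeds $L^c_{12}$, and $rL^c_{12}$ has a gradient that is simultaneously large and oscillating:
$|\Delta^2_h(rL^c_{12})|_{L^\infty}\gtrsim\alpha^{1/q}h^{1+\alpha}|\cos(\alpha^{1/q}\log h)|\int_0^tL^s_{12}\,d\tau$.
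The Riccati lower bound on $L^s_{12}$ turns this into the logarithmic growth.

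\textbf{On your perturbative step.} Gronwall with the $1/\alpha$ Schauder loss, whose double exponential is what limits the time to $T(\alpha)\sim\alpha^{1-1/q}\log|\log\alpha|$, matches the paper in spirit. It only helps once this interaction is present in the model solution. You would also need to control the error in a norm that bounds $|\omega_{\mathrm{err}}|_{B^0_{\infty,q}}$ uniformly in $\alpha$; the paper uses $|\omega_r|_{C^{\alpha}}\lesssim\alpha^{1/q}$.
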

  \begin{remark}
Taking $\alpha \rightarrow 0$ implies that   $T(\alpha)=c{\alpha^{1-\frac{1}{q}} \log(c |\log(\alpha)|) }  \rightarrow 0$, and  thus we have  strong ill-posedness, since  $  |u|_{B^{1}_{\infty,q}} \rightarrow \infty$. 
   \end{remark}
    \begin{remark}
 We observe that when $q=1$, the   ${B^{1}_{\infty,1}}$ case, the time scale is   $T(\alpha)=c \log(c |\log(\alpha)|)  \nrightarrow 0 $, as $\alpha \rightarrow 0$. This is consistent with the well-posedness and  the absence of norm inflation in ${B^{1}_{\infty,1}}$, see Vishik \cite{MV1} and  Pak and Park \cite{PP1}. 
   \end{remark}
    \begin{corollary*} 
    For any $\delta>0$,  $1<q<\infty$, and $0<\alpha<1$,  there exist smooth initial velocity   $u_0^{\delta,\alpha,q} \in C^{\infty}(\mathbb{R}^2)$ such that the solution to the 2d Euler equation  satisfies the following: 
  
  $$|u_0|_{B^{1}_{\infty,q}}=\delta  \hspace{0.3 cm } \text{but}  \hspace{0.3 cm }  \sup_{0\leq t \leq T(\alpha)}|u(t)|_{B^{1}_{\infty,q}} \geq   c \log(c |\log(\alpha)|) $$ 
  
  where 
  $T(\alpha)=c{\alpha^{1-\frac{1}{q}} \log(c |\log(\alpha)|) }$ and c is independent of $\alpha$.
      \end{corollary*}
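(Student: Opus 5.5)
The corollary follows from the Theorem by a smoothing and stability argument. Everything rests on two facts: the Theorem's data already lie in $C^{1,\alpha}$, and the Euler flow is well-posed in $C^{1,\alpha}$, which embeds continuously into $B^1_{\infty,q}$.

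\textbf{Step 1: mollify the data.} Fix $\delta$, $q$, $\alpha$, and let $v_0=u_0^{\delta,\alpha,q}\in C^{1,\alpha}(\mathbb{R}^2)$ be the datum from the Theorem. The construction produces compactly supported vorticity, or at least sufficient decay for the Biot--Savart law; if that is not already the case, I would truncate the vorticity at large radius at the cost of an arbitrarily small change. Set $u_0^\epsilon=\rho_\epsilon * v_0$, where $\rho_\epsilon$ is a standard mollifier. Then $u_0^\epsilon$ is smooth and divergence free, and $|u_0^\epsilon|_{C^{1,\alpha}}\le |v_0|_{C^{1,\alpha}}$. Moreover $u_0^\epsilon\to v_0$ in $C^{1,\alpha'}$ for every $\alpha'<\alpha$, and hence in $B^1_{\infty,q}$, because $C^{1,\alpha'}\subset B^{1+\alpha'}_{\infty,\infty}\hookrightarrow B^1_{\infty,q}$.

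\textbf{Step 2: restore the norm exactly.} Since $|u_0^\epsilon|_{B^1_{\infty,q}}\to\delta$, I would rescale the amplitude, taking $\tilde u_0^\epsilon=\lambda_\epsilon u_0^\epsilon$ with $\lambda_\epsilon=\delta/|u_0^\epsilon|_{B^1_{\infty,q}}\to 1$. This gives $|\tilde u_0^\epsilon|_{B^1_{\infty,q}}=\delta$ exactly. The rescaled data are still smooth, and still converge to $v_0$ in $C^{1,\alpha'}$.

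\textbf{Step 3: stability on $[0,T(\alpha)]$.} This is the main step. By the classical well-posedness of Lichtenstein and Gunther, and in two dimensions the global results of Wolibner and H\"older, the solution $v$ with data $v_0$ exists on $[0,T(\alpha)]$ with
$$\sup_{[0,T(\alpha)]}|v(t)|_{C^{1,\alpha}}\le M(\alpha)<\infty.$$
The solutions $\tilde u^\epsilon$ are smooth and global, and they satisfy the same bound, up to a constant, uniformly in $\epsilon$. For the stability estimate I would work with the difference $w=\tilde u^\epsilon-v$ at the vorticity level in $C^{\alpha'}$, with $\alpha'<\alpha$. The difference equation contains transport by $\tilde u^\epsilon$ plus terms like $w\cdot\nabla\omega_v$, and these cost one derivative. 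The standard remedy is interpolation: $w$ tends to $0$ in $L^2$ or in $C^0$ by an energy or Lagrangian estimate, and both solutions are uniformly bounded in $C^{1,\alpha}$. Interpolating between the two gives
$$\sup_{[0,T(\alpha)]}|w|_{C^{1,\alpha'}}\to 0\quad\text{as }\epsilon\to0.$$
The low-norm convergence itself comes from Gronwall, using $\nabla v\in L^\infty$. This step requires $T(\alpha)$, $\alpha$, and $M(\alpha)$ to be fixed before $\epsilon$ is sent to $0$, which is the case here.

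\textbf{Step 4: conclude.} Using $C^{1,\alpha'}\hookrightarrow B^1_{\infty,q}$ again, we get
$$\sup_{t\le T}|\tilde u^\epsilon(t)|_{B^1_{\infty,q}}\ge \sup_{t\le T}|v(t)|_{B^1_{\infty,q}}-o(1)\ge c\log(c|\log\alpha|)-o(1).$$
Choosing $\epsilon$ small enough, for example so that the $o(1)$ error is at most half the lower bound, and absorbing the factor into $c$, gives the corollary with $u_0^{\delta,\alpha,q}=\tilde u_0^\epsilon$. The constant $c$ is unchanged apart from a harmless factor independent of $\alpha$.
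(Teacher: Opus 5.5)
Your proof is correct and follows the paper's argument. The paper disposes of the corollary in one line: it approximates the initial vorticity by smooth data and invokes $C^{\alpha}\subset B^{0}_{\infty,q}$, which is the vorticity-level version of your embedding $C^{1,\alpha'}\hookrightarrow B^{1}_{\infty,q}$. You supply what the paper leaves implicit: approximating only in $C^{1,\alpha'}$ with $\alpha'<\alpha$, the rescaling that restores $|u_0|_{B^{1}_{\infty,q}}=\delta$ exactly, and continuous dependence on the fixed interval $[0,T(\alpha)]$.

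One minor caution concerns the low-norm convergence. Use the Lagrangian $C^0$ route, or, if you prefer the $L^2$ energy estimate, compare $\tilde u^\epsilon$ with $\lambda_\epsilon v(\lambda_\epsilon t)$ rather than with $v$. The reason is that $(\lambda_\epsilon-1)v_0\notin L^2(\mathbb{R}^2)$ when the total vorticity is nonzero, and it is nonzero for this data.
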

 
\subsection{Notation}

In this paper, we will be mostly working in  polar coordinates on $\mathbb{R}^2$. Given a Cartesian point $x=(x_1,x_2)$ recall we have: 
$$
r=\sqrt{x_1^2+x_2^2},\, \text{and} \,\, \theta=\arctan{(\frac{x_2}{x_1})}.
$$

We will use $|f|_{L^{\infty}}$ and $|f|_{C^{\alpha}}$ to denote the usual $L^{\infty}$ and $C^{\alpha}$ H\"older norm for $0<\alpha<1$.  In addition, we will also use some  H\"older-type norms introduced in the work of Elgindi and Jeong \cite{EJ3}: 
$$
|f|_{\mathring{C}^{1,\alpha}}=   |f|_{C^{\alpha}}+   |\dth  f|_{C^{\alpha}}+|r \dr f|_{C^{\alpha}}. 
$$

We will use $\Delta_{h}$ to denote the finite difference operator: $\Delta_{h}f=f(x+h)-f(x).$
Note that $\Delta$ will still denote the standard Laplacian: $\Delta f=\partial_{11}f+\partial_{22}f=\drr f+\frac{1}{r} \dr f+\frac{1}{r^2}\dthth f.$

 \subsection{Sketch of the Proof}\label{SketchProof}

 There are essentially three main ideas that will go into the proof: 
 
  \textbullet \textit{\, Computing Besov $B^{1}_{\infty,q}$ norm using  H\"older  type functions and identifying two classes of functions with different gradient but with the same Besov $B^{1}_{\infty,q}$ norm:} First, we approximate functions in the Besov space by H\"older  type ones, where we couple the H\"older regularity exponent  with appropriate rescaling.   This allows us to have a quantitive estimate on the Besov norm in terms of the H\"older regularity exponent. The first type of functions is of the following form: \begin{equation}\label{BesovType1}
f(r)=
 \frac{1}{\alpha^{\frac{1}{k}}}r (1- r^{\alpha}),
\end{equation}
  for  $1\leq k \leq \infty$. Then, the Besov norm, for $1\leq q< \infty$, is the following:
 $$|f|_{B^{1}_{\infty,q}} = c_0 \alpha^{1-\frac{1}{q}-\frac{1}{k}}.$$ 
For example when $k=2$, we have  
\begin{equation}\label{BesovEx1}
f(r)=
 \frac{1}{\alpha^{\frac{1}{2}}}r (1- r^{\alpha}). 
 \end{equation}
Therefore, for $1\leq q<\infty$, this function will have size of $$|f|_{B^{1}_{\infty,q}}= c\alpha^{\frac{1}{2}-\frac{1}{q}}.$$
 We observe the following: 
  \begin{align*}
 |f|_{B^{1}_{\infty,2}}&=c, \,\,  \text{\textit{independent} of $\alpha$, when} \,\, q=2.  \\  
|f|_{B^{1}_{\infty,q}}&=c\alpha^{\frac{1}{2}-\frac{1}{q}} \rightarrow \infty, \,\, \text{as} \,\, \alpha \rightarrow 0,  \,\, \text{when} \,\, 1 \leq q<2.\\
|f|_{B^{1}_{\infty,q}}&=c\alpha^{\frac{1}{2}-\frac{1}{q}} \rightarrow 0, \,\,  \,\, \,  \text{as} \,\, \alpha \rightarrow 0,  \,\, \text{when} \,\, 2 <q<\infty. 
 \end{align*} 
Hence, this function is $O(1)$ in  $B^{1}_{\infty,2}$. Whereas when $1 \leq q<2$, it is large in  $B^{1}_{\infty,q}$, and  when $2 <q<\infty$, it is small in  $B^{1}_{\infty,q}$.  See Section \ref{Besovf} for more details.

Next, we identify the second type of functions with the same Besov norm as their corresponding functions in \eqref{BesovType1}, but with a different gradient which is going to be bounded and  oscillating. Namely, we observe, for $1\leq k \leq \infty$, that the functions:  
\begin{equation}\label{BesovType2}
g(r)=r^{1+\alpha} \sin(\alpha^{1-\frac{1}{k}}\log(r))),
\end{equation}
 have the following  Besov norm:    
 $$ |g|_{B^{1}_{\infty,q}} =   c_0 \alpha^{1-\frac{1}{q}-\frac{1}{k}} $$
  To make it more concrete, as in the previous examples, consider the following function:
    \begin{equation}\label{BesovEx2}
    g(r)=r^{1+\alpha} \sin(\alpha^{\frac{1}{2}}\log(r))).
    \end{equation}
  We obseve the following: 
  \begin{align*}
 |g|_{B^{1}_{\infty,2}}&=c, \,\,  \text{\textit{independent} of $\alpha$, when} \,\, q=2.  \\  
|g|_{B^{1}_{\infty,q}}&=c\alpha^{\frac{1}{2}-\frac{1}{q}} \rightarrow \infty, \,\, \text{as} \,\, \alpha \rightarrow 0,  \,\, \text{when} \,\, 1 \leq q<2.\\
|g|_{B^{1}_{\infty,q}}&=c\alpha^{\frac{1}{2}-\frac{1}{q}} \rightarrow 0, \,\,  \,\, \,  \text{as} \,\, \alpha \rightarrow 0,  \,\, \text{when} \,\, 2 <q<\infty. 
 \end{align*} 
Hence, we have similar sizes as in \eqref{BesovEx1}  (See Section \ref{Besovf} for more details). From the two type of functions we considered so far, we observe that if we take   
 $$
 z(r)=\frac{1}{\alpha^{\frac{1}{2}}}r (1- r^{\alpha})  \sin(\alpha^{\frac{1}{2}}\log(r))).  \, \text{Then, we have } |z|_{B^{1}_{\infty,2}}=\frac{c}{\alpha^{\frac{1}{2}}}.
 $$
Heuristically, with the appropriate  rescaling, one can think of  functions in ${B^{1}_{\infty,q}}$ as either having a large gradient or an oscillating gradient, but not both. This will be a key into the ill-posedness mechanism. 
  \begin{figure}[ht]
\centering

\tikzset{every picture/.style={line width=0.75pt}} 

\begin{tikzpicture}[x=0.75pt,y=0.75pt,yscale=-1,xscale=1]

\draw  [fill={rgb, 255:red, 255; green, 0; blue, 0 }  ,fill opacity=0.6 ] (240.13,89.99) .. controls (240.13,84.58) and (244.52,80.19) .. (249.94,80.19) .. controls (255.35,80.19) and (259.73,84.58) .. (259.73,90) .. controls (259.73,95.41) and (255.34,99.79) .. (249.93,99.79) .. controls (244.52,99.79) and (240.13,95.4) .. (240.13,89.99) -- cycle ;
\draw  [draw opacity=0][line width=3] [line join = round][line cap = round] (160.29,26.64) .. controls (159.06,29.93) and (156.77,32.73) .. (154.79,35.64) ;
\draw    (230.39,140.4) -- (300.39,140.4) ;
\draw    (200.25,219.5) -- (200.16,170.13) ;
\draw    (99.43,140.4) -- (169.43,140.4) ;
\draw    (200.16,109.67) -- (200.07,60.3) ;
\draw  [fill={rgb, 255:red, 255; green, 0; blue, 0 }  ,fill opacity=0.6 ] (140.38,190.24) .. controls (140.38,184.83) and (144.77,180.44) .. (150.19,180.44) .. controls (155.6,180.44) and (159.98,184.83) .. (159.98,190.25) .. controls (159.98,195.66) and (155.59,200.04) .. (150.18,200.04) .. controls (144.77,200.04) and (140.38,195.65) .. (140.38,190.24) -- cycle ;
\draw  [fill={rgb, 255:red, 0; green, 0; blue, 255 }  ,fill opacity=0.6 ] (240.13,189.99) .. controls (240.13,184.58) and (244.52,180.19) .. (249.94,180.19) .. controls (255.35,180.19) and (259.73,184.58) .. (259.73,190) .. controls (259.73,195.41) and (255.34,199.79) .. (249.93,199.79) .. controls (244.52,199.79) and (240.13,195.4) .. (240.13,189.99) -- cycle ;
\draw  [fill={rgb, 255:red, 0; green, 0; blue, 255 }  ,fill opacity=0.6 ] (140.13,90.07) .. controls (140.13,84.65) and (144.52,80.27) .. (149.94,80.27) .. controls (155.35,80.27) and (159.73,84.66) .. (159.73,90.07) .. controls (159.73,95.49) and (155.34,99.87) .. (149.93,99.87) .. controls (144.52,99.87) and (140.13,95.48) .. (140.13,90.07) -- cycle ;
\draw  [fill={rgb, 255:red, 255; green, 0; blue, 0 }  ,fill opacity=0.9 ] (170.16,140.13) .. controls (170.16,123.56) and (183.59,110.13) .. (200.16,110.13) .. controls (216.73,110.13) and (230.16,123.56) .. (230.16,140.13) .. controls (230.16,156.7) and (216.73,170.13) .. (200.16,170.13) .. controls (183.59,170.13) and (170.16,156.7) .. (170.16,140.13) -- cycle ;
\draw  [fill={rgb, 255:red, 0; green, 0; blue, 255 }  ,fill opacity=0.9 ] (175.25,140.13) .. controls (175.25,126.37) and (186.4,115.21) .. (200.16,115.21) .. controls (213.92,115.21) and (225.08,126.37) .. (225.08,140.13) .. controls (225.08,153.89) and (213.92,165.04) .. (200.16,165.04) .. controls (186.4,165.04) and (175.25,153.89) .. (175.25,140.13) -- cycle ;
\draw  [fill={rgb, 255:red, 255; green, 0; blue, 0 }  ,fill opacity=0.9 ] (179.96,140.13) .. controls (179.96,128.97) and (189.01,119.93) .. (200.16,119.93) .. controls (211.32,119.93) and (220.36,128.97) .. (220.36,140.13) .. controls (220.36,151.28) and (211.32,160.33) .. (200.16,160.33) .. controls (189.01,160.33) and (179.96,151.28) .. (179.96,140.13) -- cycle ;
\draw  [fill={rgb, 255:red, 0; green, 0; blue, 255 }  ,fill opacity=0.9 ] (184.5,140.13) .. controls (184.5,131.48) and (191.51,124.46) .. (200.16,124.46) .. controls (208.81,124.46) and (215.83,131.48) .. (215.83,140.13) .. controls (215.83,148.78) and (208.81,155.79) .. (200.16,155.79) .. controls (191.51,155.79) and (184.5,148.78) .. (184.5,140.13) -- cycle ;
\draw  [fill={rgb, 255:red, 255; green, 0; blue, 0 }  ,fill opacity=0.9 ] (188.64,140.13) .. controls (188.64,133.76) and (193.8,128.6) .. (200.16,128.6) .. controls (206.53,128.6) and (211.68,133.76) .. (211.68,140.13) .. controls (211.68,146.49) and (206.53,151.65) .. (200.16,151.65) .. controls (193.8,151.65) and (188.64,146.49) .. (188.64,140.13) -- cycle ;
\draw  [fill={rgb, 255:red, 0; green, 0; blue, 255 }  ,fill opacity=0.9 ] (192.31,140.13) .. controls (192.31,135.79) and (195.82,132.27) .. (200.16,132.27) .. controls (204.5,132.27) and (208.02,135.79) .. (208.02,140.13) .. controls (208.02,144.46) and (204.5,147.98) .. (200.16,147.98) .. controls (195.82,147.98) and (192.31,144.46) .. (192.31,140.13) -- cycle ;
\draw  [fill={rgb, 255:red, 255; green, 0; blue, 0 }  ,fill opacity=0.9 ] (195.55,140.13) .. controls (195.55,137.58) and (197.61,135.51) .. (200.16,135.51) .. controls (202.71,135.51) and (204.78,137.58) .. (204.78,140.13) .. controls (204.78,142.68) and (202.71,144.74) .. (200.16,144.74) .. controls (197.61,144.74) and (195.55,142.68) .. (195.55,140.13) -- cycle ;
\draw  [fill={rgb, 255:red, 0; green, 0; blue, 255 }  ,fill opacity=0.9 ] (198.57,140.13) .. controls (198.57,139.25) and (199.28,138.53) .. (200.16,138.53) .. controls (201.04,138.53) and (201.76,139.25) .. (201.76,140.13) .. controls (201.76,141.01) and (201.04,141.72) .. (200.16,141.72) .. controls (199.28,141.72) and (198.57,141.01) .. (198.57,140.13) -- cycle ;
\draw  [fill={rgb, 255:red, 255; green, 0; blue, 0 }  ,fill opacity=0.6 ] (520.13,110.24) .. controls (520.13,104.83) and (524.52,100.44) .. (529.94,100.44) .. controls (535.35,100.44) and (539.73,104.83) .. (539.73,110.25) .. controls (539.73,115.66) and (535.34,120.04) .. (529.93,120.04) .. controls (524.52,120.04) and (520.13,115.65) .. (520.13,110.24) -- cycle ;
\draw    (510.11,140.23) -- (570.66,139.92) ;
\draw    (470.75,219.75) -- (470.58,162.56) ;
\draw    (369.93,140.65) -- (430.5,140.5) ;
\draw    (470.24,119.77) -- (470.57,60.55) ;
\draw  [fill={rgb, 255:red, 255; green, 0; blue, 0 }  ,fill opacity=0.6 ] (400.13,169.49) .. controls (400.13,164.08) and (404.52,159.69) .. (409.94,159.69) .. controls (415.35,159.69) and (419.73,164.08) .. (419.73,169.5) .. controls (419.73,174.91) and (415.34,179.29) .. (409.93,179.29) .. controls (404.52,179.29) and (400.13,174.9) .. (400.13,169.49) -- cycle ;
\draw  [fill={rgb, 255:red, 0; green, 0; blue, 255 }  ,fill opacity=0.6 ] (519.63,169.99) .. controls (519.63,164.58) and (524.02,160.19) .. (529.44,160.19) .. controls (534.85,160.19) and (539.23,164.58) .. (539.23,170) .. controls (539.23,175.41) and (534.84,179.79) .. (529.43,179.79) .. controls (524.02,179.79) and (519.63,175.4) .. (519.63,169.99) -- cycle ;
\draw  [fill={rgb, 255:red, 0; green, 0; blue, 255 }  ,fill opacity=0.6 ] (399.88,110.07) .. controls (399.88,104.65) and (404.27,100.27) .. (409.69,100.27) .. controls (415.1,100.27) and (419.48,104.66) .. (419.48,110.07) .. controls (419.48,115.49) and (415.09,119.87) .. (409.68,119.87) .. controls (404.27,119.87) and (399.88,115.48) .. (399.88,110.07) -- cycle ;
\draw  [fill={rgb, 255:red, 255; green, 0; blue, 0 }  ,fill opacity=0.9 ] (430.51,141.48) .. controls (430.41,129.67) and (448.2,119.95) .. (470.24,119.77) .. controls (492.28,119.6) and (510.22,129.04) .. (510.32,140.85) .. controls (510.41,152.67) and (492.62,162.39) .. (470.58,162.56) .. controls (448.54,162.74) and (430.6,153.3) .. (430.51,141.48) -- cycle ;
\draw  [fill={rgb, 255:red, 0; green, 0; blue, 255 }  ,fill opacity=0.9 ] (435.8,141.17) .. controls (435.8,131.97) and (451.3,124.51) .. (470.41,124.51) .. controls (489.53,124.51) and (505.02,131.97) .. (505.02,141.17) .. controls (505.02,150.37) and (489.53,157.82) .. (470.41,157.82) .. controls (451.3,157.82) and (435.8,150.37) .. (435.8,141.17) -- cycle ;
\draw  [fill={rgb, 255:red, 255; green, 0; blue, 0 }  ,fill opacity=0.9 ] (441.48,141.4) .. controls (441.43,134.65) and (454.34,129.09) .. (470.32,128.96) .. controls (486.3,128.84) and (499.29,134.2) .. (499.35,140.94) .. controls (499.4,147.68) and (486.49,153.25) .. (470.51,153.37) .. controls (454.53,153.5) and (441.53,148.14) .. (441.48,141.4) -- cycle ;
\draw  [fill={rgb, 255:red, 0; green, 0; blue, 255 }  ,fill opacity=1 ] (446.2,141.36) .. controls (446.17,137.18) and (456.98,133.71) .. (470.35,133.6) .. controls (483.72,133.5) and (494.59,136.8) .. (494.62,140.98) .. controls (494.65,145.16) and (483.84,148.63) .. (470.47,148.73) .. controls (457.1,148.84) and (446.24,145.54) .. (446.2,141.36) -- cycle ;
\draw  [fill={rgb, 255:red, 255; green, 0; blue, 0 }  ,fill opacity=0.9 ] (453.02,141.3) .. controls (453,139.15) and (460.77,137.33) .. (470.38,137.26) .. controls (479.99,137.18) and (487.79,138.87) .. (487.81,141.03) .. controls (487.82,143.19) and (480.05,145) .. (470.44,145.08) .. controls (460.84,145.15) and (453.03,143.46) .. (453.02,141.3) -- cycle ;
\draw  [fill={rgb, 255:red, 0; green, 0; blue, 255 }  ,fill opacity=0.9 ] (459.76,141.25) .. controls (459.75,140.54) and (464.52,139.93) .. (470.4,139.89) .. controls (476.29,139.84) and (481.06,140.38) .. (481.06,141.08) .. controls (481.07,141.79) and (476.31,142.4) .. (470.42,142.45) .. controls (464.54,142.49) and (459.76,141.96) .. (459.76,141.25) -- cycle ;
\draw  [draw opacity=0] (224.75,185.25) .. controls (224.75,185.25) and (224.75,185.25) .. (224.75,185.25) .. controls (224.75,171.44) and (235.94,160.25) .. (249.75,160.25) .. controls (249.75,160.25) and (249.75,160.25) .. (249.75,160.25) -- (249.75,185.25) -- cycle ; \draw   (224.75,185.25) .. controls (224.75,185.25) and (224.75,185.25) .. (224.75,185.25) .. controls (224.75,171.44) and (235.94,160.25) .. (249.75,160.25) .. controls (249.75,160.25) and (249.75,160.25) .. (249.75,160.25) ;  
\draw  [draw opacity=0] (175,94.75) .. controls (175,94.75) and (175,94.75) .. (175,94.75) .. controls (175,108.56) and (163.81,119.75) .. (150,119.75) -- (150,94.75) -- cycle ; \draw   (175,94.75) .. controls (175,94.75) and (175,94.75) .. (175,94.75) .. controls (175,108.56) and (163.81,119.75) .. (150,119.75) ;  
\draw  [draw opacity=0] (151,160.75) .. controls (164.81,160.75) and (176,171.94) .. (176,185.75) -- (151,185.75) -- cycle ; \draw   (151,160.75) .. controls (164.81,160.75) and (176,171.94) .. (176,185.75) ;  
\draw   (247,117) -- (251,119) -- (247,121) ;
\draw   (153.75,159) -- (149.75,161) -- (153.75,163) ;
\draw   (246.75,158.5) -- (250.75,160.5) -- (246.75,162.5) ;
\draw  [draw opacity=0] (250,119) .. controls (250,119) and (250,119) .. (250,119) .. controls (236.19,119) and (225,107.81) .. (225,94) -- (250,94) -- cycle ; \draw   (250,119) .. controls (250,119) and (250,119) .. (250,119) .. controls (236.19,119) and (225,107.81) .. (225,94) ;  
\draw   (153,117.5) -- (149,119.5) -- (153,121.5) ;

\draw (190.25,33.25) node [anchor=north west][inner sep=0.75pt]  [font=\tiny] [align=left] {$\displaystyle t=0$};
\draw (459.75,32.25) node [anchor=north west][inner sep=0.75pt]  [font=\tiny] [align=left] {$\displaystyle t >0$};

\end{tikzpicture}
\caption{Sketch of initial data and the ill-posedness mechanism }\label{1-1}
\end{figure}
 
\textbullet \textit{\, Ill-posedness mechanism:}  To explain the ill-posedness mechanism and make it more concrete, we will focus on the ${B^{1}_{\infty,2}}$ case. The functions we have discussed so far are at the level of the velocity field. Now, we will discuss their corresponding vorticity. First, we observe that if we consider a compactly supported vorticity of the following form: $$\omega_f(r,\theta)=\alpha^{\frac{1}{2}}r^{\alpha}\sin(2\theta),$$
It will generate a hyperbolic flow around the origin, and the velocity will be of approximately the following form: $$u(\omega_f)\approx \frac{1 }{\alpha^{\frac{1}{2}}} r  (1- r^{\alpha}), $$ which will have large gradient, but, as discussed, it will  still be $O(1)$ in   ${B^{1}_{\infty,2}}$. Now we consider another compactly supported  vorticity of the following form: 
$$\omega_g(r,\theta)=r^{\alpha} \sin(\alpha^{\frac{1}{2}}\log(r)).$$
This vorticity will generate a velocity with a bounded oscillating gradient of the form: $$u(\omega_g) \approx r^{1+\alpha} \sin(\alpha^{\frac{1}{2}}\log(r))).  $$  
Similarly, as discussed, it will still be $O(1)$ in   ${B^{1}_{\infty,2}}$. Our initial data   will be
$$
\omega_0=\omega_{g}+\omega_{f}=r^{\alpha} \sin(\alpha^{\frac{1}{2}}\log(r))+ \alpha^{\frac{1}{2}}r^{\alpha}\sin(2\theta).
$$
The ill-posedness will arise because $\omega_{f}= \alpha^{\frac{1}{2}}r^{\alpha}\sin(2\theta)$ will generate a hyperbolic flow around the origin that will stretch the radial vorticity $\omega_{g}$ along the $x$-axis and shrink it along the $y$-axis. Thus, making the level sets of the vorticity $\omega_{g}$, which were initially   circular,  look approximately more like an ellipse. Hence,  the vorticity coming from $\omega_{g}$ will develop an angular dependence.  See Figure 1.  Hence, at $O(t)$ time, it will generate a term of the following type: 
$$
\omega \approx r^{\alpha} \sin(\alpha^{\frac{1}{2}}\log(r))\cos(2\theta). 
$$ Now, this vorticity will generate a velocity of approximately the following form: $$u(\omega)\approx \frac{1}{\alpha^{\frac{1}{2}}} r  (1- r^{\alpha}) \sin(\alpha^{\frac{1}{2}}\log(r)),$$ which will be of size: $$|u(\omega)|_{B^{1}_{\infty,2}}= O(\frac{1}{\alpha^{\frac{1}{2}}}).$$ See Section \ref{LOME} for more details. We remark that the dynamic generated from the full initial data $\omega_0$ will essentially be hyperbolic due to the  $\omega_{f}$ term. This is because, although $\omega_g$ and $\omega_f$ both have the same initial size in $B^{1}_{\infty,2}$, the gradient of the velocity field from $\omega_{f}$ will be $O(\alpha^{-\frac{1}{2}})$, whereas the gradient of the velocity field from $\omega_{g}$ will be $O(1)$. Thus, on our time scale, it will not have much effect on the dynamics.

\textbullet \textit{\, Capturing the main dynamic through a leading order model:} To control the dynamic and to prove ill-posedness, we first use the Biot-Savart law decomposition by Elgindi \cite{E}, see Proposition \ref{ElgindiCcirc}, to derive   a leading order model for the 2d Euler  (see Section \ref{LOM}). This idea was used in the  previous work with Elgindi  \cite{ESK} to prove ill-posedness in $L^{\infty}$ for the Euler equation with Riesz forcing. The idea here is to consider a model where we keep only the main terms of the velocity field identified   by Elgindi. This will reduce the complexity of Euler system and make it easier to capture the main dynamic. We prove the  ill-posedness of the leading order model in Section \ref{LOME}. Then, we use a perturbative argument to show that this ill-posedness of the leading order model will still carry to the full 2d Euler (this will be done in Section \ref{RemainderrEstS} and  Section \ref{Main}).

 \subsection{Organization}
Section \ref{Prelim} is preliminary results where we recall Biot-Savart law decomposition.  In Section \ref{LOM}, we introduce  the leading order model. In Section  \ref{Besovf}, we discuss  the Besov $B^{1}_{\infty,q}$ norm approximation with H\"older type functions and two classes of functions discussed   in Subsection \ref{SketchProof}. In Section \ref{LOME}, we obtain our main estimates for the ill-posedness of the leading order model.  In Section \ref{UsefulLOME}, we  obtain useful estimates for the leading order model  which we will use in Section \ref{RemainderrEstS} to control the error between the leading order model  and the full 2d Euler equation. Finally, in Section \ref{Main}, we prove our main result.

 \section{Preliminary Results:}\label{Prelim}
Here we recall the Biot Savart law decomposition by Elgindi \cite{E}, which we will use to derive a leading order model for 2d Euler equation, see Section \ref{LOM}.

\begin{proposition}\label{ElgindiCcirc}(Elgindi \cite{E}: Biot Savart law decomposition) Let vorticity $\omega \in C^{\alpha}(\mathbb{R}^{2})\cap {\mathring{C}^{1,\alpha}}(\mathbb{R}^{2}) $ with compact support. We set  the stream function as follows: $\psi(r,\theta)=r^2\Psi(r,\theta)$. Then we can write 
 
 $$
 \Psi(\omega)(r,\theta)=L^{s}_{12}(\omega)(r) \sin(2\theta)+L^{c}_{12}(\omega)(r) \cos(2\theta)+\Psi_r(\omega)(r,\theta)
 $$
 where
 
 $$L^s_{12}(\omega)(r)= \frac{1}{4\pi} \int_{r}^{\infty} \int_0^{2\pi} \frac{\omega(\theta,s)\sin(2\theta)}{s} \, d\theta \, ds, \, \, \text{and} \, \, L^c_{12}(\omega)(r)=\frac{1}{4\pi} \int_{r}^{\infty} \int_0^{2\pi} \frac{\omega(\theta,s)\cos(2\theta)}{s} \, d\theta \, ds, $$
  and 
  
  $$|\Psi_r(\omega)|_{C^{\alpha}}, |r\dr \Psi_r(\omega)|_{C^{\alpha}} \leq c |\omega|_{C^{\alpha}\cap {\mathring{C}^{1,\alpha}}} $$
where $c$ is independent of $\alpha$. 
\end{proposition}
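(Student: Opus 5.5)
We prove the statement by the approach of Elgindi's argument in \cite{E}: write the Biot--Savart law in polar coordinates, isolate the part of the kernel that does not decay at the origin, and show that the rest is bounded in the weighted H\"older norms. With $\psi=r^2\Psi$, the equation $\Delta\psi=\omega$ becomes
\begin{equation*}
r^2\drr\Psi+5r\dr\Psi+4\Psi+\dthth\Psi=\omega .
\end{equation*}
This is an ODE in $\log r$ once we pass to angular Fourier modes. For the mode $e^{in\theta}$ the homogeneous solutions are $r^{-2\pm|n|}$. The only mode whose Green's function fails to gain decay is $|n|=2$, where one root is $r^{0}$. This degeneracy produces the logarithmic operator $L_{12}$.

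\textbf{Step 1 (modes $|n|=2$).} Project $\omega$ onto $\sin(2\theta)$ and $\cos(2\theta)$, and solve the radial ODE explicitly by variation of parameters. Choosing the solution that decays at infinity, the $\sin(2\theta)$ coefficient equals
\begin{equation*}
\frac{1}{4\pi}\int_r^\infty \int_0^{2\pi}\frac{\omega(s,\theta)\sin(2\theta)}{s}\,d\theta\,ds
\;-\;\frac{1}{4\pi r^4}\int_0^r\int_0^{2\pi}\omega\, s^3\sin(2\theta)\,d\theta\,ds .
\end{equation*}
The first term is $L^s_{12}$, and the cosine mode is handled the same way. The second term is a Hardy-type average. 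For it, both the term itself and $r\dr$ of it are bounded in $C^\alpha$ by $|\omega|_{C^\alpha}$ with constants independent of $\alpha$, because the weight $s^3/r^4$ integrates to a fixed constant. This term is placed in $\Psi_r$.

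\textbf{Step 2 (all other modes).} Take $\Psi_r$ to be this Hardy term plus the inverse of the operator on the orthogonal complement of the $|n|=2$ modes. On that complement the kernel in the variable $\log r$ is $e^{-(|n|-2)|\log(r/s)|}$-type, except for the $n=0$ and $|n|=1$ modes. For $n=0$ the roots are $-2\pm 0$, i.e.\ the $r^{-2}$ degeneracy. Those modes are treated by using compact support, taking the representation $\frac{1}{r^2}\int_0^r s\,\omega_0(s)\,ds$, which is again an averaging operator. For the modes $|n|\ge 3$, estimate the solution operator $(\dthth+4+\text{radial})^{-1}$ in $C^\alpha$ directly, rather than mode by mode. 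The plan is to rewrite the equation in the variables $(\log r,\theta)$, where it becomes a constant-coefficient elliptic operator on a cylinder, and to apply Schauder estimates there.

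The main obstacle is obtaining constants independent of $\alpha$. Schauder constants normally blow up like $1/(\alpha(1-\alpha))$. The intended remedy is to use the $\mathring{C}^{1,\alpha}$ norm on the right-hand side, which controls $\dth\omega$ and $r\dr\omega$. This lets us estimate $\Psi_r$ and $r\dr\Psi_r$ with no gain of derivatives, requiring only $L^\infty$ boundedness of a zeroth-order, integrable-kernel operator applied to $\omega$, $\dth\omega$ and $r\dr\omega$. Such operators are bounded on $C^\alpha$ uniformly in $\alpha$, because they commute with dilations and rotations and have $L^1$ kernels on the cylinder (the spectral gap away from $|n|=2$ gives exponential decay). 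Concretely, one then bounds $|\Psi_r|_{C^\alpha}$ and $|r\dr\Psi_r|_{C^\alpha}$ by convolution, in the group variables $(\log r,\theta)$, of $L^1$ kernels with $C^\alpha$ data, which is uniformly bounded. A separate check is still needed to convert H\"older continuity in $(\log r,\theta)$ back to Cartesian $C^\alpha$ near the origin.
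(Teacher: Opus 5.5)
The paper does not prove this proposition; it imports it from Elgindi's decomposition and Theorem 2 of Elgindi--Jeong. Your Fourier-mode/cylinder framework is a sound independent route, but it has a genuine gap, and it is exactly the step you defer at the end. The statement concerns the \emph{Cartesian} $C^\alpha$ norm, uniformly in $\alpha$. Your justification is that operators commuting with dilations and rotations, with $L^1$ kernels on the cylinder, are bounded on $C^\alpha$ uniformly in $\alpha$. That holds for the H\"older space of the cylinder $(\log r,\theta)$, but not for the Cartesian norm. For example, $f\mapsto f(\lambda x)$ has a unit point mass as its cylinder kernel, yet it multiplies $[f]_{C^\alpha}$ by $\lambda^\alpha$. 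Moreover, cylinder-H\"older bounds say nothing at the origin: a nonconstant $g(\theta)$ is smooth on the cylinder but discontinuous at $0$. The detour through $\dth\omega$, $r\dr\omega$ does not repair this either. $L^\infty$ control of $\dth f$ and $r\dr f$ only gives $|f(x)-f(y)|\lesssim |x-y|^\alpha|x|^{-\alpha}$ for $|x-y|\le |x|/2$.

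The missing ingredient is a weighted kernel bound. If $Tf(x)=\int f(e^{-s}R_{-\varphi}x)\,d\mu(s,\varphi)$, then
\[
|Tf|_{L^\infty}\le |\mu|\,|f|_{L^\infty},\qquad [Tf]_{C^\alpha}\le \Big(\int e^{-\alpha s}\,d|\mu|(s,\varphi)\Big)[f]_{C^\alpha},
\]
so you need the kernel to be integrable against $e^{\alpha|s|}$ on the outer side $s<0$. The bounded Green's functions of $(\partial_s+2)^2-n^2$ are:
\begin{itemize}
\item $se^{-2s}\mathbf{1}_{s>0}$ for $n=0$;
\item $\frac12(e^{-s}-e^{-3s})\mathbf{1}_{s>0}$ for $|n|=1$;
\item $-\frac14\mathbf{1}_{s<0}-\frac14e^{-4s}\mathbf{1}_{s>0}$ for $|n|=2$;
\item for $|n|\ge 3$, $-\frac{1}{2|n|}e^{(|n|-2)s}$ on $s<0$ and $-\frac{1}{2|n|}e^{-(|n|+2)s}$ on $s>0$.
\end{itemize}
The only non-decaying outer piece is the constant $-\frac14\mathbf{1}_{s<0}$ at $|n|=2$, and this is precisely $L_{12}$. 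Once it is removed, the outer decay rate is $1$, coming from $|n|=3$. The local singularities of the kernels for $\Psi_r$ and $r\dr\Psi_r$ are $\log\rho$ and $\rho^{-1}$, both integrable. This yields both estimates with constant $O((1-\alpha)^{-1})$. That is uniform for small $\alpha$, which is all the paper uses; it cannot be uniform as $\alpha\to 1$, because of the $|n|=3$ resonance. The $\mathring{C}^{1,\alpha}$ hypothesis is not needed for these two quantities.

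Smaller corrections:
\begin{itemize}
\item In Step 1 both terms carry the same sign. With $\Delta\psi=\omega$ and $t=\log r$, the solution is $a=-\frac14\int_t^\infty w-\frac14\int_{-\infty}^t e^{-4(t-\tau)}w$.
\item The $n=0$ representation should be $\frac{1}{r^2}\int_0^r s\log(r/s)\,\omega_0(s)\,ds$, with $\omega_0$ the angular average. Your formula $\frac{1}{r^2}\int_0^r s\,\omega_0$ is $r^{-1}\dr\psi_0$, not $\Psi_0$.
\item The modes $|n|=1$ still need to be treated.
\item Choosing the solutions bounded at the origin for $n=0,\pm1$ amounts to normalizing $\psi(0)=0$ and $\nabla\psi(0)=0$. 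You should state this, since for the Newtonian potential $\psi/r^2$ is unbounded.
\end{itemize}
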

\begin{remark}
The Biot-Savart law decomposition in \cite{E} is for the 3d Euler equation.  In the case of 2d Euler, the Biot-Savart law decomposition simplifies. There are several variants of it. For the $L^{\infty}$ version, see  \cite{Eremarks} and Lemma 5.1 in \cite{EJ2}. For $L^{2}$ energy version, see Theorem 4.23 and Theorem 4.24 in \cite{DE}. The version  we are using  essentially follows from the mentioned works and Theorem 2 in \cite{EJ3}.  Finally, we remark that this can be considered as a generalization of the Key-Lemma of Kiselev and \v{S}ver\'{a}k \cite{KS}. 
\end{remark}
\begin{remark}\label{PsiReminderEst}
When we do the remainder estimate in Section \ref{RemainderrEstS}, we will write $$\psi_r:=\psi(r,\theta)-r^2L^{s}_{12}(\omega)(r) \sin(2\theta)-r^2L^{c}_{12}(\omega)(r) \cos(2\theta) =r^2\Psi(r,\theta)-r^2L^{s}_{12}(\omega)(r) \sin(2\theta)-r^2L^{c}_{12}(\omega)(r) \cos(2\theta),$$
to denote the part the stream function that will give rise to  the gradient of velocity field which has H\"older estimate with a constant of $c$ independent of $\alpha$. 
\end{remark}

\section{Leading Order Model}\label{LOM}
In this section we present the leading order model for the 2d Euler equation. Recall that the 2d Euler equation in vorticity form can be written as:  
\begin{equation}
\partial_t \omega + u \cdot \nabla \omega = 0,
\end{equation}
where the velocity is given by $u = \nabla^{\perp} \psi$, with $\psi$ denoting the stream function, which solves $\Delta \psi = \omega$. In polar coordinates, the 2d Euler equation takes the following form:  
\begin{equation}
\partial_t \omega + \frac{1}{r} \, \partial_r \psi \, \partial_\theta \omega 
- \frac{1}{r} \, \partial_\theta \psi \, \partial_r \omega = 0.
\end{equation}
From Proposition \ref{ElgindiCcirc}, we can  rewrite the stream function as follows:

$$\psi(r,\theta)=r^2\Psi(r,\theta),$$ where  
 
 $$
 \Psi(\omega)(r,\theta)=L^{s}_{12}(\omega)(r) \sin(2\theta)+L^{c}_{12}(\omega)(r) \cos(2\theta)+\Psi_r(\omega)(r,\theta),
 $$
 and the operator $L^s_{12}$ and $L^c_{12}$ are defined as follows: 
 \begin{align}
 L^{s}_{12} (\omega)(r)=&\frac{1}{4\pi} \int_r^{\infty} \int_0^{2\pi}  \frac{ 1}{s} \omega(s,\theta) \sin(2\theta) d \theta \, ds, \label{L12s}\\  
 L^{c}_{12} (\omega)(r)=&\frac{1}{4\pi} \int_r^{\infty} \int_0^{2\pi}  \frac{ 1}{s} \omega(s,\theta) \cos(2\theta) d \theta \, ds. \label{L12c} 
 \end{align} 
The leading order model we consider is the following:

\begin{equation} \label{LOMeqExp}
 \dt \Omega+  2 L^s_{12}(\Omega) \sin(2\theta)  \dth \Omega- 2 rL^s_{12}(\Omega) \cos(2\theta) \dr \Omega =0. 
\end{equation} 
In our previous work with Elgindi \cite{ESK}, we derived a similar leading order model   for the Euler equation   with Riesz forcing to prove ill-posedness in $L^{\infty}$ .

\section{Besov  $B^{1}_{\infty,q}$ Norm using H\"older-type Functions }\label{Besovf}
In this section, we approximate functions in the Besov space  $B^{1}_{\infty,q}$ by H\"older-type  functions, where we couple the H\"older regularity exponent with appropriate rescaling. We do this in order to have a quantitative estimate on the Besov norm for these functions. Besov spaces can be defined in several equivalent ways, commonly using  Littlewood-Paley theory, but due to the nature of our approach where we work in the physical space, we follow the definition of the Besov norm using finite difference. This definition can be found in the book on Sobolev spaces by  Leoni \cite{GL}.

Following  \cite{GL},  The general Besov $B^{s}_{p,q}$ space using finite difference is defined as follows:  Given a function $u:\mathbb{R}^{d} \rightarrow \mathbb{R}$,  and for every $h \in \mathbb{R}^{d}$, $x \in \mathbb{R}^d$, and  $m \in \mathbb{N}$. The finite difference operator $\Delta_{h}$ is defined as follows:
$$
\Delta_h u(x)= u(x+h)-u(x).$$
Note, we use subscript $\Delta_h $ to distinguish it  from the Laplacian operator $\Delta$.  Then $\Delta_h^m$ is define as follows:
$$
\Delta^m_h u(x)= \Delta_h ( \Delta^{m-1}_h(u(x))).$$
 Using the notation that $\lfloor s \rfloor $ to denote  the integer part of $s$, we have for $s>0$,  
$$
|u|_{B^{s}_{p,q}}= \bigg(\int_{\mathbb{R}^n} |\Delta_h^{\lfloor s \rfloor +1} u|^q_{L^p}\frac{1}{|h|^{d+sq}}   dh \bigg)^{\frac{1}{q} \,}.
$$ 
 Note we wrote the integrality index $p$ and the summability index $q$ as subscript. In this paper, we  will be mostly working with $s=1$, $p=\infty$, and $1\leq q<\infty$. Hence, in this case we have:  
$$
|u|_{B^{1}_{\infty,q}}= \bigg(\int_{\mathbb{R}^n} |\Delta_h^{2} u|^q_{L^{\infty}}\frac{1}{|h|^{d+sq}}   dh \bigg)^{\frac{1}{q} \,},
$$ 
where
$$
\Delta^2_h u(x)= u(x+2h)-2 u(x+h)+u(x).
$$

Now we are ready to state and prove the lemmas for this section.  
\begin{lemma}\label{Type1}  Let  $0<\alpha<1$ and consider for any $1\leq k \leq \infty$ the function $f:[0,1] \rightarrow \mathbb{R} $ defined as follows: 

$$f(r)=   
 \frac{c}{\alpha^{\frac{1}{k}}}r (1- r^{\alpha}).
 $$
Then, Besov norm ${B^{1}_{\infty,q}}$  of  $f$, for $1 \leq q < \infty  $, satisfies the following:
$$
  c_1 \alpha^{1-\frac{1}{q}-\frac{1}{k}}  \leq |f|_{B^{1}_{\infty,q}} \leq   c_0 \alpha^{1-\frac{1}{q}-\frac{1}{k}} ,
$$
for $c_0$ and $c_1$ independent of $\alpha$.

\end{lemma}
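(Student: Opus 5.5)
The plan is to reduce everything to an explicit computation for the single profile $g(r)=r^{1+\alpha}$. We work with the finite-difference definition of $|\cdot|_{B^{1}_{\infty,q}}$ recalled above, treating $f$ as a function of the one variable $r\in[0,1]$ (so $d=1$, and the $L^\infty$ norm is over admissible $x$ with $x,x+2h\in[0,1]$). The first observation is that the second difference kills affine functions. Hence
$$\Delta_h^2 f(x)= -\frac{c}{\alpha^{\frac1k}}\,\Delta_h^2 g(x),$$
and it suffices to prove
$$c_1\alpha^{1-\frac1q}\le \Big(\int |\Delta_h^2 g|_{L^\infty}^q\,|h|^{-1-q}\,dh\Big)^{\frac1q}\le c_0\alpha^{1-\frac1q}.$$
The factor $\alpha^{-1/k}$ then comes out directly.

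\textbf{Step 1: exact size of $|\Delta_h^2 g|_{L^\infty}$ for small $h$.} For $h>0$ write
$$\Delta_h^2 g(x)=\int_0^h\!\!\int_0^h g''(x+s+t)\,ds\,dt, \qquad g''(y)=\alpha(1+\alpha)y^{\alpha-1}.$$
Since $g''>0$ is decreasing, $\Delta_h^2 g(x)$ is positive and decreasing in $x$, so its supremum is attained at $x=0$:
$$|\Delta_h^2 g|_{L^\infty}=(2h)^{1+\alpha}-2h^{1+\alpha}=(2^{1+\alpha}-2)\,h^{1+\alpha}.$$
Because $2^{1+\alpha}-2=2(e^{\alpha\log 2}-1)$, this coefficient lies between $2\log 2\,\alpha$ and $4\log 2\,\alpha$ for $0<\alpha<1$. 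Negative $h$ is symmetric. This monotonicity trick avoids any delicate case analysis near the origin, which is where the naive bound via the mean value theorem loses the factor $\alpha$.

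\textbf{Step 2: integrate in $h$.} For $|h|\le \tfrac12$,
$$\int_{|h|\le 1/2}\alpha^q |h|^{q(1+\alpha)}|h|^{-1-q}\,dh = 2\alpha^q\int_0^{1/2} h^{q\alpha-1}\,dh=\frac{2\alpha^{q}\,2^{-q\alpha}}{q\alpha}.$$
Taking the $q$-th root gives a quantity comparable to $\alpha^{1-\frac1q}$, with constants depending only on $q$. This yields both the lower bound, since we may discard the large-$h$ part, and the main term of the upper bound.

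\textbf{Step 3: the tail $|h|>\tfrac12$.} Here we only need $|\Delta_h^2(r-r^{1+\alpha})|\le 4\sup_{[0,1]}|r(1-r^\alpha)|$. Using $1-r^\alpha\le \alpha|\log r|$ and $r|\log r|\le e^{-1}$, this is at most $4\alpha/e$. Then
$$\int_{|h|>1/2} h^{-1-q}\,dh<\infty$$
gives a tail contribution of order $\alpha\le\alpha^{1-\frac1q}$, which is absorbed into the upper bound.

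The only genuinely delicate point is Step 1: obtaining the sharp factor $\alpha$, rather than $O(1)$, in $\sup_x|\Delta_h^2 g|$ uniformly near $r=0$. The convexity/monotonicity argument above is what handles it. If one instead insists on viewing $f$ as a radial function on $\mathbb{R}^2$, I would repeat the same argument along rays. The dominant contribution still comes from second differences based at the origin, and the $h$-integral acquires the factor $|h|^{-2-q}\,dh$ with $d=2$, which again produces $\int_0 h^{q\alpha-1}\,dh\sim (q\alpha)^{-1}$.
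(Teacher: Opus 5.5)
Your proof is correct and follows the paper's route: evaluate the second difference at the base point $0$ for the lower bound, bound it by a constant times $\alpha^{1-\frac1k}h^{1+\alpha}$ for the upper bound, and integrate $h^{q\alpha-1}$. Your observation that $\Delta_h^2 g(x)=\int_0^h\!\int_0^h g''(x+s+t)\,ds\,dt$ is decreasing in $x$ also supplies the justification the paper omits for its bound $|\Delta_h^2 f|_{L^\infty}\le c_0|h|^2|f''(h)|$, which the mean value theorem alone does not give near $x=0$.

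One caution about your closing remark: as a radial function on $\mathbb{R}^2$, $|x|(1-|x|^{\alpha})$ has a conical point at the origin that second differences do not annihilate. At the base point $-h$ one gets $2|h|(1-|h|^{\alpha})$, so its $B^{1}_{\infty,q}$ norm is infinite for $q<\infty$. The lemma is genuinely one-dimensional, and where the paper uses such profiles as velocities they appear as $r(1-r^{\alpha})\sin\theta$, whose linear part $r\sin\theta=x_2$ is affine.
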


\begin{proof}

For the upper bound, we observe that 
$$
|\Delta_h^{2} f|_{L^{\infty}}\leq c_0  |h|^2|f''(h)| \leq c_0 \frac{\alpha}{\alpha^{\frac{1}{k}}}|h|^{1+\alpha}.
$$
Hence,
$$
|f|^q_{B^{1}_{\infty,q}} \leq   \int_{0}^1 |\Delta_h^{2} f|^q_{L^{\infty}}\frac{1}{|h|^{1+q}}   dh \leq  c  \int_{0}^1  \frac{\alpha^q}{\alpha^{\frac{q}{k}}}|h|^{q+q\alpha}  \frac{1}{|h|^{1+q}}   dh  =  c  \int_{0}^1  \frac{\alpha^q}{\alpha^{\frac{q}{k}}}|h|^{q\alpha-1}    dh  = c_0   \alpha^{q-1-\frac{q}{k}}.$$
Thus,
$$
|f|_{B^{1}_{\infty,q}} \leq  c_0 \alpha^{1-\frac{1}{q}-\frac{1}{k}}.  
$$
Therefore, we have the upper bound. Now we would like to obtain the lower bound. Here we have

$$
|\Delta^2_h f|_{L^{\infty}}\geq  |f(2h)-2 f(h)+f(0)|= |-\frac{c}{\alpha^{\frac{1}{k}}}(2h)^{1+\alpha}+2 \frac{c}{\alpha^{\frac{1}{k}}}(h)^{1+\alpha}|=|\frac{c}{\alpha^{\frac{1}{k}}}h^{1+\alpha} (2^{\alpha}-1) |.
$$
Now since $0<\alpha<1$ is small, we have $(2^{\alpha}-1)\geq c' \alpha$. Thus,

$$
|\Delta^2_h f|_{L^{\infty}}\geq  |f(2h)-2 f(h))|\geq  c_1 \frac{\alpha}{\alpha^{\frac{1}{k}}}|h|^{1+\alpha},
$$
and hence we have 

$$
|f|_{B^{1}_{\infty,q}} \geq   c_1 \alpha^{1-\frac{1}{q}-\frac{1}{k}} .
$$

\end{proof}

\begin{lemma}\label{Type2}  Let  $0<\alpha<1$ and consider, for any $1\leq k \leq \infty$, the functions  $g_1,g_2:[0,1] \rightarrow \mathbb{R} $ defined as follows: 
$$g_1(r)=r^{1+\alpha} (\sin(\alpha^{1-\frac{1}{k}}\log(r)))^2, \,   \text{and} \,\, g_2(r)=r^{1+\alpha} \sin(\alpha^{1-\frac{1}{k}}\log(r))\,. $$
Then the Besov norm ${B^{1}_{\infty,q}}$  of $g_1$ and  $g_2$,  for $1 \leq q < \infty  $, satisfies
$$
  c_1 \alpha^{1-\frac{1}{q}-\frac{1}{k}}  \leq |g_1|_{B^{1}_{\infty,q}}, \, |g_2|_{B^{1}_{\infty,q}} \leq   c_0 \alpha^{1-\frac{1}{q}-\frac{1}{k}},
$$for $c_0$ and $c_1$ independent of $\alpha$.

\end{lemma}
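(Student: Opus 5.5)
We follow the same scheme as in the proof of Lemma \ref{Type1}: an upper bound from a second-derivative estimate for $|\Delta_h^2 g_i|_{L^\infty}$, and a lower bound from evaluating the second difference at the base point $x=0$. Throughout set $\beta=\alpha^{1-\frac{1}{k}}$, so that $0<\alpha\le \beta\le 1$ and $\alpha/\beta=\alpha^{1/k}$.

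\textbf{Upper bound.} Write $g_2(r)=r^{1+\alpha}\sin(\beta\log r)$.
\begin{itemize}
\item Differentiating twice,
$$g_2''(r)=r^{\alpha-1}\Big[\big((1+\alpha)\alpha-\beta^2\big)\sin(\beta\log r)+(1+2\alpha)\beta\cos(\beta\log r)\Big],$$
so $|g_2''(r)|\le c\,\beta\, r^{\alpha-1}$, using $\alpha\le\beta\le1$.
\item The analogous computation for $g_1=r^{1+\alpha}\sin^2(\beta\log r)=\tfrac12 r^{1+\alpha}\big(1-\cos(2\beta\log r)\big)$ gives $|g_1''(r)|\le c(\alpha+\beta)r^{\alpha-1}\le c\beta r^{\alpha-1}$.
\item Since $g_i''$ is integrable near $0$, the second difference satisfies
$$|\Delta_h^2 g_i(x)|\le |h|\int_x^{x+2h}|g_i''|\le c\beta\,|h|^{1+\alpha}/\alpha .$$
This bound is too lossy. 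Instead, split into two cases. For $x\ge 2|h|$, the mean value bound gives $|\Delta_h^2 g|\le |h|^2\sup|g''|\le c\beta|h|^2x^{\alpha-1}\le c\beta|h|^{1+\alpha}$. For $x\le 2|h|$, rescale: $g(hy)=h^{1+\alpha}\big[y^{1+\alpha}\sin(\beta\log y+\beta\log h)\big]$, and the second difference of $y\mapsto y^{1+\alpha}\sin(\beta\log y+\phi)$ on $[0,4]$ is bounded by $c\beta$ uniformly in $\phi$. This holds because the function equals $y^{1+\alpha}\sin\phi$ plus $O(\beta)$ terms: $\Delta^2$ of $y^{1+\alpha}$ at scale one is $O(\alpha)$, and the $\beta$-dependent part is controlled by integrating $|y^{\alpha}\beta\log y|$-type derivatives, which are $O(\beta)$ in $C^1$ away from the singular log, handled by integrability.
\end{itemize}
Either way $|\Delta_h^2 g_i|_{L^\infty}\le c\beta|h|^{1+\alpha}$ for $|h|\le1$. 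For $|h|>1$ the difference is trivially bounded, contributing $O(1)$ after integration. Integrating exactly as in Lemma \ref{Type1},
$$\int_0^1\beta^q|h|^{q\alpha-1}\,dh=\frac{\beta^q}{q\alpha},$$
so $|g_i|_{B^1_{\infty,q}}\le c_0\,\beta\,\alpha^{-1/q}=c_0\,\alpha^{1-\frac1q-\frac1k}$.

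\textbf{Lower bound.} Take $x=0$, so that $\Delta_h^2 g(0)=g(2h)-2g(h)$. For $g_2$,
$$\Delta_h^2 g_2(0)=h^{1+\alpha}\Big[2^{1+\alpha}\sin(\beta\log h+\beta\log 2)-2\sin(\beta\log h)\Big].$$
Expanding in $\beta\log2$ and $\alpha$, the bracket equals
$$2\beta\log2\,\cos(\beta\log h)+2(2^\alpha-1)\sin(\beta\log h)+O(\beta^2).$$
This is not uniformly bounded below: the cosine vanishes when $\beta\log h\in\pi/2+\pi\mathbb{Z}$. Since $\log h$ ranges over $(-\infty,0)$, however, the phase $\beta\log h$ cycles, and on a fixed proportion of each period $|\cos(\beta\log h)|\ge\tfrac12$. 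On those $h$ the bracket is $\ge c\beta$ once we also use $2^\alpha-1\approx\alpha\log2\le\beta\log 2$, restricting further to the sub-arcs where the two terms do not cancel. These are arcs where $\cos$ and $\sin$ have the same sign, which again form a fixed fraction of each period; alternatively use sub-arcs where $\sin\approx0$.

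It then suffices to show that integrating $\beta^q h^{q\alpha-1}$ over this set $E$ is comparable to integrating over all of $(0,1)$. In the variable $s=-\log h$ the measure is $e^{-q\alpha s}\,ds$. The set $E$ is periodic in $s$ with period $\pi/\beta$ and density bounded below. Since $\pi/\beta\le\pi/\alpha$, this period is at most comparable to the decay scale $1/(q\alpha)$ of the weight, so a periodic set of positive density captures a fixed fraction of the mass $\int_0^\infty e^{-q\alpha s}\,ds=1/(q\alpha)$. When $\beta=\alpha$ (the case $k=\infty$) the two scales are comparable rather than separated; there one handles the first period directly, using that $E\cap[0,\pi/\beta]$ carries a fixed fraction of the weight. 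This gives $|g_2|^q_{B^1_{\infty,q}}\ge c\beta^q/\alpha$.

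The same argument works for $g_1$, using $\sin^2=\tfrac12(1-\cos(2\cdot))$. Its second difference at $0$ is
$$\tfrac12 h^{1+\alpha}\Big[(2^{1+\alpha}-2)-\big(2^{1+\alpha}\cos(2\beta\log(2h))-2\cos(2\beta\log h)\big)\Big],$$
which has the same structure with the extra nonnegative term $2^{1+\alpha}-2\sim\alpha$.

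\textbf{Main obstacle.} The main obstacle is the lower bound's handling of the oscillation: controlling the cancellation between the $\alpha$ and $\beta$ terms, and the measure-comparison argument in the borderline regime $\beta\sim\alpha$.
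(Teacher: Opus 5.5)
Your proposal is correct and follows essentially the paper's route. The upper estimate comes from the second-derivative bound $|g_i''|\le c\beta r^{\alpha-1}$. The lower estimate evaluates $\Delta_h^2 g_i$ at $x=0$, expands trigonometrically, and integrates $h^{q\alpha-1}$ over a periodic family of arcs in $\log h$.

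Your version is actually more careful on one point. The paper's pointwise bound $|g_1(2h)-2g_1(h)|\ge c\beta h^{1+\alpha}|\sin(2\beta\log h)|$ silently drops the $(2^\alpha-1)$ term, which is of the same order as the kept term when $k=\infty$. Your restriction to arcs where the two terms do not cancel handles this correctly.

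Two small repairs are needed.
\begin{enumerate}
\item The range $|h|>1$ must contribute $O(\beta^q)$, not $O(1)$, since an $O(1)$ term cannot be absorbed when $1-\frac1q-\frac1k>0$ (for example $k=\infty$). Either use $\sup|g_i|\le c\beta$, or note that no such $h$ occurs for differences taken inside $[0,1]$.
\item Your expansion in small $\beta$ does not cover $k=1$, where $\beta=1$. Instead, write the bracket exactly as $A\sin(\beta\log h+\varphi)$ with $A\ge 2^{1+\alpha}\sin(\beta\log 2)\ge c\beta$. This handles all $\beta\in[\alpha,1]$ at once.
\end{enumerate}
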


\begin{proof}

The upper bounds follow similarly to the previous lemma. We will show how to obtain the lower bounds. It is clear that 
$$
|\Delta^2_h g_1|_{L^{\infty}}\geq  |g_1(2h)-2 g_1(h)|= | (2h)^{1+\alpha}  (\sin(\alpha^{1-\frac{1}{k}}\log(2h)))^2-2  (h)^{1+\alpha} (\sin(\alpha^{1-\frac{1}{k}}\log(h)) )^2|.$$
Now observe that we can write 
\begin{align}\label{Deltahf}
g_1(2h)-2 g_1(h)&= 2 h^{1+\alpha}  \bigg(2^{\alpha}(\sin(\alpha^{1-\frac{1}{k}}\log(2h))^2-  (\sin(\alpha^{1-\frac{1}{k}}\log(h))^2) \bigg).
 \end{align}
 Thus,
$$
\sin(\alpha^{1-\frac{1}{k}}\log(2h)= \sin(\alpha^{1-\frac{1}{k}}\log(h))\cos( \alpha^{1-\frac{1}{k}} \log(2))+\cos(\alpha^{1-\frac{1}{k}}\log(h) ) \sin(  \alpha^{1-\frac{1}{k}} \log(2)).
$$
Hence, 
\begin{align*}
(\sin(\alpha^{1-\frac{1}{k}}\log(2h))^2=
&\big(\sin(\alpha^{1-\frac{1}{k}}\log(h))\cos( \alpha^{1-\frac{1}{k}} \log(2))\big)^2+\\
 &2 \sin(\alpha^{1-\frac{1}{k}}\log(h))\cos( \alpha^{1-\frac{1}{k}} \log(2))\cos(\alpha^{1-\frac{1}{k}}\log(h) ) \sin(  \alpha^{1-\frac{1}{k}} \log(2)) +   \\
   &\big(\cos(\alpha^{1-\frac{1}{k}}\log(h) ) \sin(  \alpha^{1-\frac{1}{k}} \log(2))\big)^2.  
 \end{align*}
 
 Thus, when  $\alpha$  is small, the   leading term of $2^{\alpha}\big(\sin(\alpha^{1-\frac{1}{k}}\log(h))\cos( \alpha^{1-\frac{1}{k}} \log(2))\big)^2$, which is $ \big(\sin(\alpha^{1-\frac{1}{k}}\log(h))\big)^2$, cancels with $(\sin(\alpha^{1-\frac{1}{k}}\log(h))^2))$ in the right hand of \eqref{Deltahf}.  Therefore, we have 
 \begin{align*}
|g_1(2h)-2 g_1(h)|&\geq  2 |h|^{1+\alpha}  |2 \sin(\alpha^{1-\frac{1}{k}}\log(h))\cos( \alpha^{1-\frac{1}{k}} \log(2))\cos(\alpha^{1-\frac{1}{k}}\log(h) ) \sin(  \alpha^{1-\frac{1}{k}} \log(2))|.
 \end{align*}
 Hence, we have

\ \begin{align}\label{Deltahf2}
|g_1(2h)-2 g_1(h)|&\geq  c  \alpha^{1-\frac{1}{k}}   |h|^{1+\alpha}  | \sin(2\alpha^{1-\frac{1}{k}}\log(h)) | ,\end{align}
where $c$ is independent of $\alpha$. Therefore, we have

  $$
|g_1|^q_{B^{1}_{\infty,q}} \geq  
c   \alpha^{q-\frac{q}{k}}     \int_{0}^1  |h|^{q+q\alpha} |\sin(2 \alpha^{1-\frac{1}{k}}\log(h))|^q \frac{1}{|h|^{1+q}}   dh =c   \alpha^{q-\frac{q}{k}}    \int_{0}^1  |h|^{q\alpha} |\sin(2 \alpha^{1-\frac{1}{k}}\log(h))|^q \frac{1}{|h|}   dh.$$

Changing variables to $u=2 \alpha^{1-\frac{1}{k}}\log(h)$, we have

$$
 \alpha^{q-\frac{q}{k}}     \int_{0}^1  |h|^{q\alpha} |\sin(2 \alpha^{1-\frac{1}{k}}\log(h))|^q \frac{1}{|h|}   dh = \frac{ \alpha^{q-\frac{q}{k}} }{2 \alpha^{1-\frac{1}{k}}}   \int_{0}^{\infty}      e^{-\frac{q\alpha^{\frac{1}{k}} }{2 } u }  |\sin(u)|^q \, du.   
$$

Now observe that 

$$
   \int_{0}^{\infty}      e^{-\frac{q\alpha^{\frac{1}{k}} }{2 } u }  |\sin(u)|^q    du \geq \sum_{k=0}^{\infty}\int_{\frac{\pi}{4}+k\pi}^{\frac{3\pi}{4}+k\pi}      e^{-\frac{q\alpha^{\frac{1}{k}} }{2 } u }  |\sin(u)|^q    du \geq  \frac{1}{2^{q/2}}\sum_{j=0}^{\infty}\int_{\frac{\pi}{4}+j\pi}^{\frac{3\pi}{4}+j\pi}      e^{-\frac{q\alpha^{\frac{1}{k}} }{2 } u }     du = \frac{c}{2^{q/2}} \frac{1}{q\alpha^{\frac{1}{k}}}.
$$

Hence,

$$
|g_1|^q_{B^{1}_{\infty,q}} \geq   \alpha^{q-\frac{q}{k}}     \int_{0}^1  |h|^{q\alpha} |\sin(2 \alpha^{1-\frac{1}{k}}\log(h))|^q \frac{1}{|h|}   dh \geq    \frac{\alpha^{q-\frac{q}{k}} }{2 \alpha^{1-\frac{1}{k}}}    \frac{c}{2^{q/2}} \frac{1}{q\alpha^{\frac{1}{k}}}= c  \alpha^{q-\frac{q}{k}-1} . 
 $$

Thus, we obtain our desired lower bound

$$
|g_1|_{B^{1}_{\infty,q}} \geq   c  \alpha^{1-\frac{1}{k}-\frac{1}{q}}, 
$$

where $c$ is independent of $\alpha$. The estimate on $g_2$ follows similarly  from trig identities. 

\end{proof}

\begin{lemma} Let  $0<\alpha<1$,  $1\leq q<\infty$, and consider the following vorticity on $\mathbb{R}^2$:  
 \begin{align*}
\omega_A(r,\theta)&=\alpha^{\frac{1}{q}}r^{\alpha} \sin(2\theta) , \quad  \omega_{B}(r,\theta)= \alpha^{\frac{1}{q}}r^{\alpha}\cos(\alpha^{\frac{1}{q}}\log(r))\sin(2\theta), \\
\omega_{C}(r,\theta)&= r^{\alpha}\sin(\alpha^{\frac{1}{q}}\log(r)), \quad \omega_{D}(r,\theta)=r^{\alpha}\sin(\alpha^{\frac{1}{q}}\log(r)) \sin(2\theta).
\end{align*}

All compactly supported on the ball of radius 1. Then the velocities   generated by each of them have the corresponding   Besov norm ${B^{1}_{\infty,q}}$: 
$$
|u(\omega_A)|_{B^{1}_{\infty,q}} , |u(\omega_B)|_{B^{1}_{\infty,q}}, |u(\omega_B)|_{B^{1}_{\infty,q}} \leq c,     
$$
where  $c$ is independent  of $\alpha$. But we have
$$
|u(\omega_D)|_{B^{1}_{\infty,q}} \geq  \frac{c}{\alpha^{1-\frac{1}{q}}}.     
$$

\end{lemma}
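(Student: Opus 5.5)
The plan is to compute the velocity of each vorticity explicitly near the origin, reducing each to one of the one-dimensional profiles of Lemma \ref{Type1} or Lemma \ref{Type2} (with $k=q$). The remaining pieces should be harmless, and the Besov norm of the velocity is controlled by the $B^{1}_{\infty,q}$ behaviour of these radial profiles. The cutoff to the unit ball is applied to the vorticity. By Proposition \ref{ElgindiCcirc}, the stream function is $\psi=r^2\Psi$, with $\Psi$ equal to $L^s_{12}(\omega)\sin(2\theta)+L^c_{12}(\omega)\cos(2\theta)$ plus a remainder $\Psi_r$. The remainder satisfies $|\Psi_r|_{C^\alpha},|r\dr\Psi_r|_{C^\alpha}\le c|\omega|_{C^\alpha\cap\mathring C^{1,\alpha}}$. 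For all four vorticities the right-hand side is $O(1)$ uniformly in $\alpha$, because $r\dr$ applied to $r^\alpha\sin(\alpha^{1/q}\log r)$ produces only factors $\alpha$ and $\alpha^{1/q}$. This gives a gradient contribution from $\Psi_r$ that is bounded in $C^{\alpha}$ with $O(1)$ constant. Its second differences are then $O(|h|^{1+\alpha})$, contributing $O(\alpha^{-1/q})$ to the norm. That is too crude, so I would instead treat the non-leading parts by an explicit angular-mode computation, as follows.

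\textbf{Explicit modes.} For a vorticity of the form $a(r)\sin(2\theta)$ one has $\psi=\phi(r)\sin(2\theta)$ with $\phi''+\phi'/r-4\phi/r^2=a$. This ODE is solved by
\[
\phi(r)=-\tfrac{r^2}{4}\int_r^1 a(s)s^{-1}ds-\tfrac{r^{-2}}{4}\int_0^r a(s)s^3ds .
\]
The first term is exactly the $L^s_{12}$ piece. For radial $a(r)$, the velocity is $u_\theta=r^{-1}\int_0^r a(s)s\,ds$.

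\textbf{Case $\omega_A$.} Here $\int_r^1\alpha^{1/q}s^{\alpha-1}ds=\alpha^{1/q-1}(1-r^\alpha)$. So the main part of $\nabla u$ is the Lemma \ref{Type1} profile $\alpha^{1/q-1}r(1-r^\alpha)$ differentiated once, i.e. $u\sim\alpha^{-(1-1/q)}r(1-r^\alpha)$. Lemma \ref{Type1} with $1/k=1-1/q$ gives an $O(1)$ norm. The second term of $\phi$ is $cr^{2+\alpha}\alpha^{1/q}$, which is harmless: its Lemma \ref{Type2}-type computation gives $\alpha^{1/q}\cdot\alpha^{-1/q}\cdot\alpha=O(1)$ after the usual $\int h^{q\alpha-1}$.

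\textbf{Case $\omega_B$.} The integral is $\int_r^1 \alpha^{1/q}s^{\alpha-1}\cos(\alpha^{1/q}\log s)ds$. In the variable $t=\log s$ this is $\Re$ of $\alpha^{1/q}\int e^{(\alpha+i\alpha^{1/q})t}dt$, which equals $\alpha^{1/q}(\alpha+i\alpha^{1/q})^{-1}(1-r^{\alpha+i\alpha^{1/q}})$. Its modulus is $O(1)$, so the leading profile is $r^{1+\alpha}$ times a combination of $\sin$ and $\cos$ of $\alpha^{1/q}\log r$. This is a Lemma \ref{Type2} profile with $1-1/k=1/q$, plus a term $cr$ that is linear and invisible to $\Delta_h^2$. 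Lemma \ref{Type2} then gives an $O(1)$ norm.

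\textbf{Case $\omega_C$.} This vorticity is radial, and $u_\theta=r^{-1}\int_0^r s^{1+\alpha}\sin(\alpha^{1/q}\log s)ds$ is $r^{1+\alpha}$ times a bounded combination of $\sin$ and $\cos$ of $\alpha^{1/q}\log r$. Again Lemma \ref{Type2} gives $O(1)$.

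\textbf{Case $\omega_D$.} The same complex integral now appears without the prefactor $\alpha^{1/q}$: $\int_r^1 s^{\alpha-1}\sin(\alpha^{1/q}\log s)ds=\Im[(\alpha+i\alpha^{1/q})^{-1}(1-r^{\alpha+i\alpha^{1/q}})]$. Its leading part is $-\alpha^{-1/q}(1-r^\alpha\cos(\alpha^{1/q}\log r))$, up to a relative $O(\alpha^{1-1/q})$ error. So the stream function contains $\alpha^{-1/q}r^2(1-r^{\alpha}\cos(\alpha^{1/q}\log r))\sin(2\theta)$, and the velocity along the ray $\theta=\pi/4$ has radial profile $\alpha^{-1/q}r(1-r^\alpha\cos(\alpha^{1/q}\log r))$.

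The lower bound is then obtained exactly as in the proofs of Lemmas \ref{Type1} and \ref{Type2}, evaluating $\Delta_h^2u$ at $x=0$ with $h$ along that ray. The $\alpha^{-1/q}r$ part is linear and drops out. The oscillating part gives $|u(2h)-2u(h)|\ge c\alpha^{-1/q}\alpha^{1/q}|h|^{1+\alpha}|\sin(\alpha^{1/q}\log h+\cdot)|$, using $2^{\alpha+i\alpha^{1/q}}-1\approx i\alpha^{1/q}\log 2$. Integrating against $|h|^{-1-q}$ as in Lemma \ref{Type2} (with $1/k=1-1/q$) gives $|u|^q\gtrsim \alpha^{-1}$, i.e. only $\alpha^{-1/q}$.

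This is the main obstacle: the naive count gives $\alpha^{-1/q}$, not the claimed $\alpha^{-(1-1/q)}$. I would therefore keep the full $x$-dependence rather than evaluating at $x=0$. I would take $x$ with $|x|\sim|h|$ where the $h$-derivative of the oscillation is maximal, and carefully track the product-rule cross term between the large factor $\alpha^{-1/q}(1-r^\alpha)$ and the oscillation. That cross term behaves like $z$ in Subsection \ref{SketchProof}. The real issue is identifying which combination of exponents reproduces the stated power, and I expect this to take the bulk of the work. The remainder term of $\phi$, namely $r^{-2}\int_0^r$, is again of Lemma \ref{Type2} type and $O(1)$, so it cannot cancel the lower bound.
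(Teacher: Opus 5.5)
Your treatment of $\omega_A,\omega_B,\omega_C$ is sound. It matches the paper's second route: identify the leading radial profile and feed it to Lemma \ref{Type1} or Lemma \ref{Type2}. For $\omega_A,\omega_B$ the paper's first route is shorter. There the whole vorticity satisfies $|\omega|_{C^\alpha}\le c\alpha^{1/q}$, so the crude $C^\alpha$-based estimate you set aside costs a factor $\alpha^{-1/q}$ and lands exactly at $O(1)$.

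The genuine gap is $\omega_D$. Your plan to recover $\alpha^{-(1-\frac1q)}$ by taking $|x|\sim|h|$ and tracking a product-rule cross term cannot work, because your count is right and the stated bound is not. The paper asserts $u(\omega_D)=rL^s_{12}(\omega_D)\sin\theta=c\,\alpha^{-(1-\frac1q)}r^{1+\alpha}\cos(\alpha^{\frac1q}\log r)\sin\theta$ and then quotes Lemmas \ref{Type1} and \ref{Type2}. In fact
\[
L^s_{12}(\omega_D)(r)=\frac14\int_r^1 s^{\alpha-1}\sin(\alpha^{\frac1q}\log s)\,ds=\frac14\operatorname{Im}\frac{1-r^{\alpha+i\alpha^{1/q}}}{\alpha+i\alpha^{1/q}},
\]
whose prefactor is $|\alpha+i\alpha^{1/q}|^{-1}\approx\alpha^{-1/q}$, exactly as you found. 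The exponent $1-\frac1q$ belongs to the non-oscillating $\omega_A$. The two coincide only at $q=2$, which is the case worked out in Subsection \ref{SketchProof}.

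Moreover, $\alpha^{-1/q}$ is also an upper bound, so no choice of $x$ can help. The paper's own argument for $\omega_A,\omega_B$ shows this:
\begin{itemize}
\item We have $[\omega_D]_{C^\alpha}\le c$ uniformly in $\alpha$.
\item Hence the Littlewood--Paley pieces satisfy $|P_j\omega_D|_{L^\infty}\le c\,2^{-j\alpha}$ for $j\ge0$, which gives $|\omega_D|_{B^0_{\infty,q}}\le c\,\alpha^{-1/q}$.
\item Boundedness of the Biot--Savart law then gives $|u(\omega_D)|_{B^1_{\infty,q}}\le c\,\alpha^{-1/q}$.
\end{itemize}
For $q>2$ this bound equals $\alpha^{1-\frac2q}\cdot\alpha^{-(1-\frac1q)}=o(\alpha^{-(1-\frac1q)})$, so the claimed lower bound is false there. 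The crude estimate you discarded is precisely what refutes it.

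For $1\le q\le2$ you need nothing more. Since $\frac1q\ge1-\frac1q$, we have $\alpha^{-1/q}\ge\alpha^{-(1-\frac1q)}$. Your evaluation at $x=0$ already yields the stated inequality. On the ray through $h$, every homogeneous piece $F$ of degree $1+\beta$, with $\beta=\alpha+i\alpha^{1/q}$, satisfies $F(2h)-2F(h)=2(2^\beta-1)F(h)$, and $2(2^\beta-1)/\beta\to2\log 2$; then integrate as in Lemma \ref{Type2}.

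So the correct conclusion is $|u(\omega_D)|_{B^1_{\infty,q}}\simeq\alpha^{-1/q}$. You should prove that, with the lower bound at $x=0$ and the upper bound via $C^\alpha$, rather than search for a mechanism producing $\alpha^{-(1-\frac1q)}$. Also, the second $|u(\omega_B)|$ in the statement is a typo for $|u(\omega_C)|$, as you assumed.
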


\begin{proof}
First, let us  consider $\omega_A$ and $\omega_B$. There are couple of ways to see this.   Here, we have  $|\omega_A|_{C^{\alpha}}, |\omega_B|_{C^{\alpha}} \leq c\alpha^{\frac{1}{q}} $. Thus,  $$|\omega_A|_{B^{\alpha}_{\infty,\infty}}, |\omega_B|_{B^{\alpha}_{\infty,\infty}} \leq c\alpha^{\frac{1}{q}},$$ which implies that $$|\omega_A|_{B^{0}_{\infty,q}}, |\omega_B|_{B^{0}_{\infty,q}} \leq c, $$
where  $c$ is independent  of $\alpha$. This can be seen by using Littlewood-Paley theory. Since  the Riesz operators are bounded on   Besov spaces, we have   $$
|u(\omega_A)|_{B^{1}_{\infty,q}} , |u(\omega_B)|_{B^{1}_{\infty,q}}  \leq c.      
$$

One can also see this by computing the velocity field generated by $\omega_A$ and $\omega_B$. Since the vorticity has $\sin(2\theta)$ symmetry, the stream functions will also be $\sin(2\theta)$ symmetric. Thus, one can compute the velocity field by solving $\Delta \psi = \omega$; this reduces to an ODE, which we can explicitly solve. For vorticity $\omega_{A}$, one observes that the main term in the velocity field (by the Biot-Svart decomposition, Proposition \ref{ElgindiCcirc}), for instance for the first component $u_1$, will be:
\begin{equation}\label{omegaAtype1}
u_1(\omega_A)=rL^s_{12}(\omega_A)\sin(\theta)=\frac{c}{\alpha^{1-\frac{1}{q}}}r (1- r^{\alpha})\sin(\theta).\end{equation}

This can also be computed it directly through the ODE. We observe that \eqref{omegaAtype1} is of the same form as Lemma \ref{Type1}. Similarly, for $\omega_{B}$, we note that  the main term in the velocity field, by the Biot-Svart decomposition Proposition \ref{ElgindiCcirc}, is the following: 

\begin{equation}\label{omegaBtype2}
u_1(\omega_B)=rL^s_{12}(\omega_B)\sin(\theta)=cr^{1+\alpha}\sin(\alpha^{\frac{1}{q}}\log(r))\sin(\theta). 
\end{equation}
 Hence, we observe it is of the same form as in  Lemma \ref{Type2}.  Now for vorticity $\omega_C$, since it is radial, we can also compute the velocity field through the stream function, and see that the main term in the velocity field will be: 
$$
u_1(\omega_C) =cr^{1+\alpha}\sin(\alpha^{\frac{1}{q}}\log(r))\sin(\theta). 
$$
Therefore, it is of the same form as $u_1(\omega_B)$. Thus, we have  $$|u(\omega_C)|_{B^{1}_{\infty,q}} \leq c. $$ Finally, for vorticity $\omega_D$,  we can  compute the main term in the velocity field similarly, and it will be of the following form: 

$$
u(\omega_D) =rL^s_{12}(\omega_D)\sin(\theta)=\frac{c}{\alpha^{1-\frac{1}{q}}}r^{1+\alpha}\cos(\alpha^{\frac{1}{q}}\log(r))\sin(\theta). 
$$
Thus, from  Lemma \ref{Type1} and  Lemma \ref{Type2},  by rescaling, we have 

$$
|u(\omega_D)|_{B^{1}_{\infty,q}}\geq \frac{c}{\alpha^{1-\frac{1}{q}}},  
$$
and this completes the proof.

 \end{proof}

\section{Main Estimate for the Leading Order Model  }\label{LOME}
In this section, we prove the main estimates for the leading order model. In Lemma \ref{UpperBndL12}, we establish an upper bound on the gradient of velocity of the leading order model, which we will use in Proposition \ref{LOMLowerBndL12} to control the dynamics and prove a lower bound on the the  gradient of velocity of the leading order model. Then, we use Lemma \ref{UpperBndL12} and  Proposition \ref{LOMLowerBndL12}    in Proposition \ref{L12cBesovLowBnd}    to prove the ill-posedness in  $B^{1}_{\infty,q}$ of the leading order model, for $1<q<\infty$.

\begin{lemma}\label{UpperBndL12} For any $1<q<\infty$, let $\Omega(t)$ be a solution to the leading order model \eqref{LOMeqExp} with initial data 
\begin{equation} \label{Omega0UpperBndL12}
\Omega_0(r,\theta) =r^{\alpha} \sin(\alpha^{\frac{1}{q}}\log(r))+\alpha^{\frac{1}{q}} r ^{\alpha} \sin(2\theta)
\end{equation} 

compactly supported on the ball of radius 1, then with $L^s_{12}(\Omega)$  defined as in \eqref{L12s},   there exist constants $c$ independent of $\alpha$ such that we have the following estimates 
 
\begin{equation} \label{L12Up}
 |L^s_{12}(\Omega)(t)|_{L^{\infty}} \leq \frac{c}{\alpha^{1-\frac{1}{q}}} ,
 \end{equation} 

on the time scale  $0\leq t \leq T(\alpha)=c\alpha^{1-\frac{1}{q}}\log(c|\log(\alpha)|) $, with c independent of $\alpha$.

\end{lemma}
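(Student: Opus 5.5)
The plan is to exploit the transport structure of the leading order model \eqref{LOMeqExp}. Writing $\lambda(t,r):=L^s_{12}(\Omega)(t,r)$, the model reads $\dt\Omega + u\cdot\nabla\Omega=0$ with the explicit velocity field coming from the stream function $r^2\lambda(t,r)\sin(2\theta)$. Since this field is of the form $\nabla^\perp(r^2\lambda\sin 2\theta)$, it is divergence free, so $\Omega(t)$ is obtained by composing $\Omega_0$ with a measure-preserving flow map $\Phi_t$. Consequently $|\Omega(t)|_{L^\infty}\le|\Omega_0|_{L^\infty}\le 2$. The flow also preserves the origin, since $r\dr\Omega$ and $\dth\Omega$ have coefficients vanishing appropriately there. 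Finally, the radial characteristic equation is
\[
\dot r = -2r\lambda(t,r)\cos(2\theta),
\]
so the support of $\Omega(t)$ stays inside a ball of radius $R(t)\le \exp\big(2\int_0^t|\lambda|_{L^\infty}\big)$.

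The core estimate is on $\lambda$ itself. Since $|\sin 2\theta|\le 1$ and $|\Omega|\le 2$,
\[
|\lambda(t,r)| \le \frac{1}{4\pi}\int_r^{R(t)}\int_0^{2\pi}\frac{|\Omega(s,\theta)|}{s}\,d\theta\,ds .
\]
This crude bound diverges logarithmically near the origin, so we must use the smallness and decay of $\Omega$ near $r=0$: $|\Omega_0|\le c\,r^\alpha$. The key observation is that the flow can at most move points radially by a factor $e^{\pm 2\int_0^t|\lambda|}$. Hence
\[
|\Omega(t,r,\theta)|\le |\Omega_0|(\Phi_t^{-1}) \le c\,r^\alpha e^{2\alpha\int_0^t|\lambda|_{L^\infty}}.
\]
Plugging this in gives $\int_0^{R}s^{\alpha-1}ds\sim R^\alpha/\alpha$, that is,
\[
|\lambda(t)|_{L^\infty}\le \frac{c}{\alpha}\, e^{c\alpha\int_0^t|\lambda|}.
\]
This is too weak by itself, so we need the initial bound to be sharper. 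For the $\sin(2\theta)$-projection we compute directly $L^s_{12}(\Omega_0)=\tfrac{1}{4}\alpha^{1/q}\int_r^1 s^{\alpha-1}ds\le c\,\alpha^{1/q-1}$, because the radial part $r^\alpha\sin(\alpha^{1/q}\log r)$ has zero $\sin 2\theta$ mode. So the estimate must track how much $\sin 2\theta$ mode the radial part acquires.

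To do this, I would run a bootstrap. Assume $|\lambda|_{L^\infty}\le M\alpha^{1/q-1}$ on $[0,T]$. The generated $\sin2\theta$ component from deforming the radial part $\omega_g$ is, to leading order, $t\,\lambda\, r\dr\omega_g\cos 2\theta$, which lies in the $\cos$ mode, not the $\sin$ mode. Its feedback into $L^s_{12}$ comes only at second order, from the $\sin(2\theta)\dth$ term acting on $\cos 2\theta$, with size $t^2\lambda^2$ times $r\dr\omega_g$. Using $|r\dr\omega_g|\lesssim (\alpha+\alpha^{1/q})r^\alpha$ together with the $1/\alpha$ from the radial integral gives a contribution $\lesssim \alpha^{1/q-1}(t\lambda)^2$. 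Rather than expanding in $t$, I would express this through the Lagrangian change: $\Omega(t)=\Omega_0\circ\Phi_t^{-1}$ with $\log r$ shifted by at most $2\int_0^t|\lambda|$. This shift is of size $\log(c|\log\alpha|)$ on the stated time scale, since $T\lambda\sim M\log(c|\log\alpha|)$. The shift multiplies $r^\alpha$ by at most $e^{2\alpha\int|\lambda|}=O(1)$, because $\alpha\cdot\log|\log\alpha|\to0$, and changes the phase of $\sin(\alpha^{1/q}\log r)$ by $\alpha^{1/q}\log|\log\alpha|\ll1$. The projection of the radial part onto $\sin 2\theta$ is then bounded by this phase/amplitude distortion times $r^\alpha$, and integrating $s^{\alpha-1}$ yields $\alpha^{1/q-1}\cdot o(1)$. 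Adding the contribution of $\omega_f$, which is bounded by $c\alpha^{1/q-1}$ since its amplitude is preserved up to $O(1)$ factors, closes the bootstrap with $M$ fixed independent of $\alpha$.

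The main obstacle is making this Lagrangian projection estimate rigorous. Specifically, one must show that $\int_0^{2\pi}\Omega_0(\Phi_t^{-1}(s,\theta))\sin 2\theta\,d\theta$ is $O(\alpha^{1/q}s^\alpha)$ uniformly, using only the crude control $|\log r(\Phi_t^{-1})-\log r|\le 2\int_0^t|\lambda|$ on the flow. I would try to handle it by writing $\Omega_0\circ\Phi_t^{-1}-\Omega_0$ via the mean value theorem in $\log r$. Since $\Omega_0$ has $|\partial_{\log r}\Omega_0|\le c(\alpha+\alpha^{1/q})r^\alpha\le c\alpha^{1/q} r^\alpha$, this difference is bounded by $c\alpha^{1/q}r^\alpha\log(c|\log\alpha|)$. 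This gives $|\lambda|\le c\alpha^{1/q-1}\log(c|\log\alpha|)$, which falls short of the target by a logarithm. If that loss appears, I would absorb it into the choice of the constant $c$ inside $T(\alpha)$, that is, work on the slightly shorter scale where $\int_0^t|\lambda|\lesssim 1$ for the error term.
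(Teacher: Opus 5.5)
There is a genuine gap, and it is where you suspected. Your rigorous step bounds $|\omega_g\circ\Phi_t^{-1}-\omega_g|\le c\,\alpha^{1/q}r^{\alpha}|X|$ pointwise, where $\omega_g=r^\alpha\sin(\alpha^{1/q}\log r)$ and $X$ is the $\log r$-displacement along the back-trajectory. After the $s^{\alpha-1}$ integration, this contributes $c\,\alpha^{1/q-1}\sup|X|$ to $|L^s_{12}(\Omega)|$. Under your bootstrap hypothesis $|X|\le 2M\alpha^{1/q-1}t$, which on $[0,T(\alpha)]$ is of size $M\log(c|\log\alpha|)$. So you only get $|L^s_{12}(\Omega)|\le (c_1+c_2M\log(c|\log\alpha|))\,\alpha^{1/q-1}$, and the bootstrap cannot close for any $M$ independent of $\alpha$.

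The fallback does not rescue the statement. For every fixed $c>0$ we have $\log(c|\log\alpha|)\to\infty$, so this factor cannot be absorbed into an $\alpha$-independent constant. Restricting to $t\lesssim\alpha^{1-1/q}$ proves a strictly weaker lemma. It also discards exactly the range $t\gg\alpha^{1-1/q}$ in which Proposition \ref{LowerBndL12} produces growth. Your heuristic claim of an $\alpha^{1/q-1}\cdot o(1)$ contribution has the same flaw. The phase shift $\alpha^{1/q}|X|\ll 1$ acts on $\omega_g$, whose amplitude is $r^\alpha$. It must therefore be compared with the amplitude $\alpha^{1/q}r^\alpha$ of $\omega_f=\alpha^{1/q}r^\alpha\sin2\theta$, not with $1$, and it exceeds that amplitude by the factor $|X|$.

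The missing idea is that the radial part never feeds the $\sin2\theta$ mode at all. Even the second-order feedback you describe is zero, since $\sin2\theta\,\dth\cos2\theta$ and $r\cos2\theta\,\dr(f(r)\cos2\theta)$ only produce modes $0$ and $4$. The reason is parity. The characteristic system $\dot r=-2r\lambda(t,r)\cos2\theta$, $\dot\theta=2\lambda(t,r)\sin2\theta$ commutes with $\theta\mapsto-\theta$, because $\lambda=L^s_{12}(\Omega)$ depends only on $(t,r)$. Hence $X(t,r,\theta)$ is even in $\theta$, and $\omega_g\circ\Phi_t^{-1}=(re^X)^\alpha\sin(\alpha^{1/q}\log(re^X))$ is even, so its $\sin2\theta$ projection vanishes identically for all time. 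This is the paper's $I_1=0$.

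It follows that $L^s_{12}(\Omega(t))=L^s_{12}(\omega_f\circ\Phi_t^{-1})$, and your own ingredients finish the argument. The bounds $|\omega_f\circ\Phi_t^{-1}|\le\alpha^{1/q}r^\alpha e^{\alpha|X|}$ and $\int_r^{R(t)}s^{\alpha-1}\,ds\le R(t)^\alpha/\alpha$ give $|L^s_{12}(\Omega)|\le c\,\alpha^{1/q-1}$ with $c$ independent of $M$. This needs only $\alpha|X|\le 2M\alpha^{1/q}t\le 1$, which holds on $[0,T(\alpha)]$ for small $\alpha$. So the bootstrap closes with no logarithmic loss.

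A minor point: the model's velocity field is not $\nabla^\perp(r^2\lambda\sin2\theta)$, since the model drops the $r\dr\lambda$ term, and its divergence is $-2r\dr\lambda\cos2\theta\neq 0$. This is harmless, because your $L^\infty$ and support bounds only use that $\Omega$ is transported.
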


\begin{proof}

We will prove this proposition using a bootstrap argument. Namely, we assume that there is constant a $c$ such that 
\begin{equation}\label{BootstrapH}
  |L^s_{12}(\Omega)(t)|_{L^{\infty}} \leq   \frac{10c}{\alpha^{1-\frac{1}{q}}},  
\end{equation} 
 and we will show that 
 \begin{equation}\label{BootstrapC}
  |L^s_{12}(\Omega)(t)|_{L^{\infty}} \leq   \frac{c}{\alpha^{1-\frac{1}{q}}}.  
\end{equation} 
We consider the  leading order model:  
$$
 \dt \Omega+  2 L^s_{12}(\Omega) \sin(2\theta)  \dth \Omega- 2 rL^s_{12}(\Omega) \cos(2\theta) \dr \Omega =0.
 $$
By studying the characteristics of the transport equation, we can write the coupled system of ODEs for the radial and angular particle trajectories for the flow map.  Only for now, to shorten the notation, we write $\Phi(t,r,\theta)=(\Phi_r (t,r,\theta),\Phi_{\theta}(r,\theta)))=(r(t),\theta(t))$, where  when $t=0$, we have   $\Phi( 0,r,\theta) =(r(0),\theta(0))=(r,\theta)$. Thus, the flow maps satisfies the following:  
 \begin{align}
\dot{r}(t)&=  - 2r(t) L^s_{12}(\Omega) \cos(2\theta(t))\label{RadialTraj}, \\
\dot{\theta}(t)&=2 L^s_{12}(\Omega)  \sin(2\theta(t))\label{AnguarTraj}.
\end{align}
Therefore, we can write 
 $$
\frac{dr }{d\theta }=-\frac{  r(t)   \cos(2\theta(t))}{     \sin(2\theta(t))}.
$$
Hence, we obtain 

 $$
 r(t) |\sin(2\theta(t))|^{\frac{1}{2}}  =     r   |\sin(2\theta)|^{\frac{1}{2}}, 
 $$
 which can be written as follows: 
 
\begin{equation}\label{FlowMapEq}
 (r(t))^2 |\sin(2\theta(t))|   =     r^2  |\sin(2\theta)| .
\end{equation} 
This corresponds to the trajectories of a hyperbolic flow. Now, since the flow map trajectories satisfy \eqref{FlowMapEq}, the inverse flow also satisfies \eqref{FlowMapEq}, with the trajectories flowing in the opposite direction. Thus, from now onward, with a slight abuse of notation, we will write $\Phi^{-1}(t,r,\theta)=(\Phi^{-1}_r (t,r,\theta),\Phi^{-1}_{\theta}(r,\theta)))=(r(t),\theta(t))$, where  when $t=0$, we have   $\Phi^{-1}( 0,r,\theta) =(r(0),\theta(0))=(r,\theta)$. Recall from $\eqref{Omega0UpperBndL12}$, that initial data is the following:  $$\Omega_0(r,\theta) =r^{\alpha} \sin(\alpha^{\frac{1}{q}}\log(r))+\alpha^{\frac{1}{q}} r ^{\alpha} \sin(2\theta).$$
 With $(\Phi^{-1}(t,r,\theta))=(r(t),\theta(t))$, we have 
 $$
 \Omega(t,r,\theta)= \Omega_0(\Phi^{-1}(t,r,\theta))=\Omega_0(r(t),\theta(t)) =(r(t))^{\alpha} \sin(\alpha^{\frac{1}{q}}\log(r(t)))+\alpha^{\frac{1}{q}} (r (t) )^{\alpha} \sin(2\theta(t)).
 $$
 From \eqref{FlowMapEq} and \eqref{RadialTraj}, we can write 
$$
r(t)=r\exp{( \int_0^{t} 2  L^s_{12}(r(\tau)) \cos(2\theta(\tau))) d\tau}.
$$
To shorten the notation, we introduce 
 $$
  X(t,r,\theta):=2\int_0^t(L_{12}(r(\tau))\cos(2\theta(\tau)))d\tau \implies \,\, \text{we can write} \,\,  r(t)=re^{X(t)}. 
  $$
Thus, we have   
\begin{equation}\label{OmegaT}
 \Omega(t,r,\theta)=    r^{\alpha}e^{\alpha X(t)} \sin(\alpha^{\frac{1}{q}}\log(re^{X(t)}))+\alpha^{\frac{1}{q}}  r^{\alpha}e^{\alpha X(t)} \sin(2\theta(t)).
\end{equation} 
  Recall that 
 
  $$
 L^{s}_{12} (\Omega)(r)=c \int_r^{\infty} \int_0^{2\pi}  \frac{ 1}{s} \Omega(t,s,\theta) \sin(2\theta) d \theta \, ds.
$$
Using symmetry, it suffices to consider the $\theta$ integral on $[0,\frac{\pi}{2}]$, and since our data has compact support, the operator can be written as follows:

$$
 L^{s}_{12} (\Omega)(r)=c \int_r^{1} \int_0^{\frac{\pi}{2}}  \frac{ 1}{s} \Omega(t,s,\theta) \sin(2\theta) d \theta \, ds
$$
Using \eqref{OmegaT}, we obtain 
\begin{equation}\label{L12splitI1I2}
  L_{12}^{s}(\Omega)=c\int_{r}^{1}\int_{0}^{\frac{\pi}{2}}\frac{ 1}{s} \Big(  s^{\alpha}e^{\alpha X(t)} \sin(\alpha^{\frac{1}{q}}\log(se^{X(t)}))+\alpha^{\frac{1}{q}}  s^{\alpha}e^{\alpha X(t)} \sin(2\theta(t)) \Big)    \sin(2\theta) d\theta ds:= I_1+I_2,
\end{equation} 
where the terms $I_1$ and $I_2$  are defined as follows:
 \begin{align}
I_1&= c\int_{r}^{1}\int_{0}^{\frac{\pi}{2}}\frac{ 1}{s} \Big(  s^{\alpha}e^{\alpha X(t)} \sin(\alpha^{\frac{1}{q}}\log(se^{X(t)})) \Big)    \sin(2\theta) d\theta ds, \\
I_2&=c \int_{r}^{1}\int_{0}^{\frac{\pi}{2}}\frac{ 1}{s} \Big(   \alpha^{\frac{1}{q}}  s^{\alpha}e^{\alpha X(t)} \sin(2\theta(t)) \Big)    \sin(2\theta) d\theta ds.
\end{align}
 For $I_1$, we observe that  $X(t)=2\int_0^t(L_{12}(s(\tau))\cos(2\theta(\tau)))d\tau$   is even in $\theta$. Thus, we have 

\begin{equation}\label{I1Est}
I_1= c\int_{r}^{1}\int_{0}^{\frac{\pi}{2}}\frac{ 1}{s} \Big(  s^{\alpha}e^{\alpha X(t)} \sin(\alpha^{\frac{1}{q}}\log(se^{X(t)})) \Big)    \sin(2\theta) d\theta ds=0
\end{equation} 
 For $I_2$, we  will use the bootstrap assumption. Namely,  from \eqref{BootstrapH}, we have 
$$
  |L^s_{12}(\Omega)(t)|_{L^{\infty}} \leq   \frac{10c}{\alpha^{1-\frac{1}{q}}}.  
$$
Therefore,  
$$
|X(t)|_{L^{\infty}} \leq  \frac{10c}{\alpha^{1-\frac{1}{q}}} t \implies \alpha |X(t)|_{L^{\infty}} < \alpha^{\frac{1}{q}}     10c  t <1,
$$
on the time scale $0\leq t \leq T(\alpha)=c \alpha^{1-\frac{1}{q}}\log(c|\log(\alpha)|) $. Thus, we have $ |e^{\alpha X(t)}|<2, $  which gives

\begin{equation}\label{I2Est}
|I_2| \leq \int_{r}^{1}\int_{0}^{\frac{\pi}{2}}\frac{ 1}{s}   \alpha^{\frac{1}{q}}  s^{\alpha} |e^{\alpha X(t)}| \, |\sin(2\theta(t))|    |\sin(2\theta)| d\theta ds \leq  c \int_{r}^{1}\int_{0}^{\frac{\pi}{2}}\frac{  \alpha^{\frac{1}{q}}  s^{\alpha}}{s}      d\theta ds \leq \frac{c}{\alpha^{1-\frac{1}{q}}}.
\end{equation}
 Hence, from \eqref{I2Est}, \eqref{I1Est},  \eqref{L12splitI1I2}, and by adjusting the constant $c$, if necessary, we have 
\begin{equation} 
 | L_{12}^{s}(\Omega)|_{L^{\infty}} \leq  \frac{c}{\alpha^{1-\frac{1}{q}}} , 
\end{equation} 
which completes the bootstrap argument, and proves the lemma.

\end{proof}

\begin{proposition}\label{LowerBndL12} For any $1<q<\infty$, let $\Omega(t)$ be a solution to the leading order model:

\begin{equation}\label{LOMLowerBndL12} 
 \dt \Omega+  2 L^s_{12}(\Omega) \sin(2\theta)  \dth \Omega- 2 rL^s_{12}(\Omega) \cos(2\theta) \dr \Omega =0
\end{equation} 

 with initial data 
\begin{equation} \label{Omega0LowerBndL12}
\Omega_0(r,\theta) =r^{\alpha} \sin(\alpha^{\frac{1}{q}}\log(r))+\alpha^{\frac{1}{q}} r ^{\alpha} \sin(2\theta)
\end{equation}

compactly supported on the ball of radius 1, then with $L^s_{12}$  defined as in \eqref{L12s},   there exist constants $c$ independent of $\alpha$ such that we have the following estimates 
 
\begin{equation} \label{IntL12Down}
\sup_{0\leq r \leq 1}| \int_0^{t}L^s_{12}(\Omega) d\tau|\geq  \log(1+c\frac{t}{\alpha^{1-\frac{1}{q}}})
\end{equation} 
on the time scale  $0\leq t \leq T(\alpha)=c\alpha^{1-\frac{1}{q}}\log(c|\log(\alpha)|) $, with c independent of $\alpha$.

\end{proposition}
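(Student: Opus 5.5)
The plan is to turn the representation of $L^s_{12}(\Omega)$ from the proof of Lemma \ref{UpperBndL12} into a differential inequality for the time integral of $L^s_{12}$ at the origin. As there, we write $L^s_{12}(\Omega)=I_1+I_2$ via \eqref{L12splitI1I2}. We use $I_1=0$ from \eqref{I1Est}. On the time scale $T(\alpha)$, Lemma \ref{UpperBndL12} gives $\alpha|X|\le 1$, so $\tfrac12\le e^{\alpha X(t)}\le 2$. It therefore suffices to bound from below
$$
I_2\;\geq\; c\,\alpha^{\frac1q}\int_r^1 s^{\alpha-1}\int_0^{\frac{\pi}{2}} \sin(2\theta(t))\sin(2\theta)\,d\theta\,ds ,
$$
where $\theta(t)=\Phi^{-1}_\theta(t,s,\theta)$ is the backward angular trajectory.

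\textbf{Step 1: sign and monotonicity.} The lines $\theta=0$ and $\theta=\frac{\pi}{2}$ are invariant under \eqref{AnguarTraj}, so $\theta(t)$ stays in $(0,\frac{\pi}{2})$ and the inner integrand is nonnegative. Hence $L^s_{12}(\Omega)(t,r)\geq 0$ for all $r$, and it is nonincreasing in $r$, since it is an integral from $r$ to $1$ of a nonnegative quantity. Consequently
$$
\sup_{0\le r\le1}\int_0^t L^s_{12}(\Omega)\,d\tau=\Lambda(t):=\int_0^t L^s_{12}(\Omega)(\tau,0)\,d\tau ,
$$
and along any trajectory $L^s_{12}(\Omega)(\tau,r(\tau))\le L^s_{12}(\Omega)(\tau,0)$.

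\textbf{Step 2: explicit angular dynamics.} From \eqref{AnguarTraj} we get $\frac{d}{dt}\log\tan\theta(t)=\frac{2\dot\theta}{\sin 2\theta}=4L^s_{12}$. For the backward flow this gives $\tan\theta(t)=\tan\theta\, e^{-\mu}$, where $\mu=4\int_0^t L^s_{12}(\tau,r(\tau))\,d\tau$. By Step 1, $0\le\mu\le 4\Lambda(t)$. Using $\sin 2\theta(t)=\frac{2\tan\theta\, e^{-\mu}}{1+\tan^2\theta\, e^{-2\mu}}$, one checks that $\sin 2\theta(t)$ is decreasing in $\mu\ge0$ for fixed $\theta$, so $\sin 2\theta(t)\ge \sin 2\theta^*$ with $\tan\theta^*=\tan\theta\,e^{-4\Lambda}$. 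A direct estimate (restrict to $\theta\in[\frac{\pi}{8},\frac{3\pi}{8}]$) then yields
$$
\int_0^{\frac{\pi}{2}}\sin(2\theta(t))\sin(2\theta)\,d\theta\;\geq\; c\,e^{-4\Lambda(t)} .
$$

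\textbf{Step 3: ODE comparison.} Put $r=0$ and use $\int_0^1 s^{\alpha-1}\,ds=\alpha^{-1}$. This gives
$$
\Lambda'(t)=L^s_{12}(\Omega)(t,0)\;\ge\;\frac{c}{\alpha^{1-\frac1q}}\,e^{-4\Lambda(t)},\qquad \Lambda(0)=0 .
$$
Integrating $\frac{d}{dt}e^{4\Lambda}\ge \frac{4c}{\alpha^{1-1/q}}$ gives $\Lambda(t)\ge\frac14\log\bigl(1+\frac{4ct}{\alpha^{1-1/q}}\bigr)$. The factor $\frac14$ is then absorbed to reach the form \eqref{IntL12Down}, after adjusting constants, for instance via $\frac14\log(1+x)\ge\log(1+c'x)$ on the relevant bounded range of $x$, which is allowed because $T(\alpha)$ restricts $x$ to size $\log|\log\alpha|$.

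\textbf{Main obstacle.} The delicate point is Step 1–2: $\mu$ involves $L^s_{12}$ evaluated along the moving radius $r(\tau)$ rather than at a fixed point, so a priori one only controls an integral of a supremum, not the supremum of integrals. The positivity and radial monotonicity of $L^s_{12}$ is what resolves this, since it places the supremum at $r=0$ uniformly in time. This step must be checked carefully, including that the radial cutoff at $r=1$ and the factor $e^{\alpha X}$ do not break positivity of the integrand.
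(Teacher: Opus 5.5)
Your route is genuinely different from the paper's, and it is sound up to its last line. The $\sin 2\theta$-part of the integrand is nonnegative because quadrants are invariant under the flow. Hence $L^s_{12}(\Omega)(t,\cdot)\ge 0$ and is nonincreasing in $r$, so the supremum sits at $r=0$ and $\mu\le 4\Lambda$ along every backward trajectory. The exact angular solution then gives $\Lambda'\ge c\,\alpha^{\frac1q-1}e^{-4\Lambda}$, hence $\Lambda(t)\ge \frac14\log\bigl(1+4ct\,\alpha^{\frac1q-1}\bigr)$.

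\textbf{The gap.} The failing step is the final absorption of the factor $\frac14$. On $[0,T(\alpha)]$ the variable $x=t/\alpha^{1-\frac1q}$ runs up to $c\log(c|\log\alpha|)$. This range is bounded for each fixed $\alpha$, but not uniformly as $\alpha\to0$. Since $\log(1+c'x)-\frac14\log(1+x)\to\infty$ as $x\to\infty$ for every fixed $c'>0$, the constant you need must decay like $(\log|\log\alpha|)^{-3/4}$. So you have not proved \eqref{IntL12Down} with constants independent of $\alpha$.

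\textbf{The target is out of reach anyway.} The prefactor $1$ is not reachable by the paper's argument either, so you should not chase it.
The paper differentiates \eqref{L12smp} in $t$ and $r$ and closes the Riccati inequality $\partial_t L^s_{12}\ge -(L^s_{12})^2$ pointwise in $r$. However, \eqref{L12smp} carries $e^{(\alpha-1)X}$, whereas \eqref{FlowMapEqLBnd} with $r(t)=re^{X}$ gives $\sin 2\theta(t)=e^{-2X}\sin 2\theta$, i.e.\ $e^{(\alpha-2)X}$. With the correct exponent the same computation gives only $\partial_t L^s_{12}\ge -(2-\alpha)(L^s_{12})^2$, hence $\int_0^t L^s_{12}\ge \frac{1}{2-\alpha}\log\bigl(1+(2-\alpha)tL^s_{12}(\Omega_0)\bigr)$.
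A self-similar analysis of the $\alpha\to0$ limit, in the variable $\rho=-\alpha\log r$, suggests the true growth is $\kappa\log(t/\alpha^{1-\frac1q})$ with $\kappa\approx\frac23<1$.
So the statement to prove is $\Lambda(t)\ge c\log(1+ct/\alpha^{1-\frac1q})$. Your argument does establish this, and it is all that Proposition~\ref{L12cBesovLowBnd} and the main theorem use.

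\textbf{Comparison of the two routes.}
\begin{itemize}
\item The paper's radial integration $\int_r^1\partial_s (L^s_{12})^2\,ds$ exploits the decay of $L^s_{12}$ in $r$, which is why it gets the better constant $\frac{1}{2-\alpha}$. Its time-differentiation of $X$ at a fixed point, though, treats the $t$-dependence of the backward trajectory informally.
\item Your comparison of $L^s_{12}(\tau,r(\tau))$ with $L^s_{12}(\tau,0)$ sidesteps that issue rigorously, at the cost of the factor $\frac14$.
\end{itemize}

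\textbf{Minor fixes.}
\begin{itemize}
\item $\sin 2\theta(t)$ is not decreasing in $\mu$ for $\theta>\frac{\pi}{4}$; it increases until $\tan\theta\,e^{-\mu}=1$. It is unimodal in $\mu$, though, so its minimum over $[0,4\Lambda]$ is $\min(\sin2\theta,\sin2\theta^*)$, which is still $\ge ce^{-4\Lambda}$ on $[\frac{\pi}{8},\frac{3\pi}{8}]$.
\item $I_1=0$ holds only after integrating over the full circle, using the $\theta\mapsto-\theta$ symmetry of the flow, not over $[0,\frac{\pi}{2}]$.
\item For the lower bound in your Step 0, you must discard points whose backward trajectory leaves the unit ball. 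For example, restrict to $s\le e^{-M}$ with $M=\sup|X|$; this costs only the factor $e^{-\alpha M}\ge e^{-1}$ in $\int s^{\alpha-1}ds$.
\end{itemize}
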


\begin{remark}
Similar type of estimate was obtained in our previous work with Elgindi \cite{ESK}. 
\end{remark}

\begin{proof}
We proceed in a similar manner as in Lemma  \ref{UpperBndL12}. Recall by setting  
$$\Phi^{-1}(t,r,\theta)=(\Phi^{-1}_r (t,r,\theta),\Phi^{-1}_{\theta}(r,\theta)))=(r(t),\theta(t)),$$ 
where  when $t=0$, we have   $\Phi^{-1}( 0,r,\theta) =(r(0),\theta(0))=(r,\theta)$, and by studying the characteristics of the leading order model, we obtained:
 
 \begin{equation}\label{FlowMapEqLBnd}
 (r(t))^2 |\sin(2\theta(t))|   =     r^2  |\sin(2\theta)|. 
\end{equation} 
Using the characteristics  of \eqref{LOMLowerBndL12} and equation \eqref{FlowMapEqLBnd}, we can write 

$$
r(t)=r\exp{( \int_0^{t} 2  L^s_{12}(\Omega(\tau)) \cos(2\theta(\tau)))}
$$
Similarly, as in  Lemma \ref{UpperBndL12}, shortening  the notation, we   write  $r(t)=re^{X(t)}$, where

 \begin{equation}\label{Xflow}
  X(t,r,\theta)=2\int_0^t(L^{s}_{12}(r(\tau))\cos(2\theta(\tau))).
\end{equation} 
Hence, we can write the solution as follows:  
 $$
 \Omega(t,r,\theta)= \Omega_0(\Phi^{-1}(t,r,\theta))  = r^{\alpha}e^{\alpha X(t)} \sin(\alpha^{\frac{1}{q}}\log(re^{X(t)}))+\alpha^{\frac{1}{q}}  r^{\alpha}e^{\alpha X(t)} \sin(2\theta(t)). 
 $$
 As in Lemma \ref{UpperBndL12}, we compute the operator $L^s_{12}$. Because of symmetry and the compact support of the initial data, it suffices to consider  
      $$
  L_{12}^{s}(\Omega)=c\int_{r}^{1}\int_{0}^{\frac{\pi}{2}}\frac{ 1}{s} \Big(  s^{\alpha}e^{\alpha X(t)} \sin(\alpha^{\frac{1}{q}}\log(se^{X(t)}))+\alpha^{\frac{1}{q}}  s^{\alpha}e^{\alpha X(t)} \sin(2\theta(t)) \Big)    \sin(2\theta) d\theta ds. 
  $$
Now from $\eqref{Xflow}$, and since we are integrating against $\sin(2\theta)$ term, again using symmetry, we have 
  $$
 \int_{r}^{1}\int_{0}^{\frac{\pi}{2}}\frac{ 1}{s} \Big(  s^{\alpha}e^{\alpha X(t)} \sin(\alpha^{\frac{1}{q}}\log(se^{X(t)})) \Big)    \sin(2\theta) d\theta ds =0. 
  $$
  Thus, we obtain the following: 
  
 \begin{equation}\label{L12smp0}
  L_{12}^{s}(\Omega)(t)=c\int_{r}^{1}\int_{0}^{\frac{\pi}{2}}\frac{ 1}{s} \Big( \alpha^{\frac{1}{q}}  s^{\alpha}e^{\alpha X(t)} \sin(2\theta(t)) \Big)    \sin(2\theta) d\theta ds.
 \end{equation} 
  Using \eqref{FlowMapEqLBnd}, we can rewrite $ L_{12}^{s}(\Omega)$ as follows

 \begin{equation}\label{L12smp}
     L_{12}^{s}(\Omega)(t)=\int_{r}^{1}\int_{0}^{\frac{\pi}{2}}\frac{ \alpha^{\frac{1}{q}}  e^{(\alpha-1) X(t)}  s^{\alpha} }{s}             \sin^2(2\theta) d\theta ds. 
 \end{equation} 
Therefore, taking the time derivative, we obtain 
  $$
    \partial_t     L_{12}^{s}(\Omega)(t)=\int_{r}^{1}\int_{0}^{\frac{\pi}{2}}\frac{ \alpha^{\frac{1}{q}}  e^{(\alpha-1) X(t)}  s^{\alpha} }{s}       (\alpha-1) X'(t)       \sin^2(2\theta) d\theta ds.  
  $$
 Using $\eqref{Xflow}$ gives     
  
  $$
    \partial_t     L_{12}^{s}(\Omega)(t)=c\int_{r}^{1}\int_{0}^{\frac{\pi}{2}}\frac{ \alpha^{\frac{1}{q}}  e^{(\alpha-1) X(t)}  s^{\alpha} }{s}       (\alpha-1)  \Big( 2  L^s_{12}(\Omega(\tau)) \cos(2\theta(\tau) \Big)      \sin^2(2\theta) d\theta ds. 
  $$
 Thus, we have the following

 \begin{equation}\label{dtL12smp}
    \partial_t     L_{12}^{s}(\Omega)(t)\geq - c \int_{r}^{1}\int_{0}^{\frac{\pi}{2}}\frac{ \alpha^{\frac{1}{q}}  e^{(\alpha-1) X(t)}  s^{\alpha} }{s}        2 L^s_{12}(\Omega(t))        \sin^2(2\theta) d\theta ds. 
 \end{equation} 
From $\eqref{L12smp}$, we can take the radial derivative to obtain  
 \begin{equation}\label{dsL12smp}
    \dr L_{12}^{s}(\Omega)(t)= - c\int_{0}^{2\pi}\frac{ \alpha^{\frac{1}{q}}  e^{(\alpha-1) X(t)}  s^{\alpha} }{s}   \sin^2(2\theta) d\theta ds. 
 \end{equation}
  Hence, from \eqref{dtL12smp} and \eqref{dtL12smp} we get 
     $$
   \partial_t  L_{12}^{s}(\Omega)(t)\geq  \int_{r}^{1} 2    \partial_{s} L_{12}^{s}(\Omega)  L^s_{12}(\Omega(\tau)) = \int_{r}^{1}    \partial_{s} \Big ( L_{12}^{s}(\Omega)  \Big)^2         ds  .          
   $$
Thus, we  obtain the following Riccati-type equation
    \begin{equation}\label{dtL12Riccat}
   \partial_t  L_{12}^{s}(\Omega)(t)\geq -    (L_{12}^{s}(\Omega)(t) )^2.        
    \end{equation} 
Therefore,  
  $$
  L_{12}^{s}(\Omega)(t) \geq \frac{L^s_{12}(\Omega)(0)}{1+tL^s_{12}(\Omega)(0)},
  $$
which gives 
   $$
  \int_0^{t}L_{12}^{s}(\Omega)(\tau)  \geq   \log(1+tL^s_{12}(\Omega)(0)). 
  $$
 Hence, we have our result.

   \end{proof}

   \begin{proposition} \label{L12cBesovLowBnd} 
   
For any $1<q<\infty$, let $\Omega(t)$ be a solution to the leading order model:

\begin{equation}  \label{LOMBesov} 
 \dt \Omega+  2 L^s_{12}(\Omega) \sin(2\theta)  \dth \Omega- 2 rL^s_{12}(\Omega) \cos(2\theta) \dr \Omega =0
\end{equation} 

 with initial data 
\begin{equation*}
\Omega_0(r,\theta) =r^{\alpha} \sin(\alpha^{\frac{1}{q}}\log(r))+\alpha^{\frac{1}{q}} r ^{\alpha} \sin(2\theta)
\end{equation*}

compactly supported on the ball of radius 1, then with $L^c_{12}$  defined as in \eqref{L12c},   there exist constants $c$ independent of $\alpha$ such that we have the following estimates 
 
\begin{equation} \label{rL12cBesovLower}
|r L_{12}^{c}(\Omega)(t,r)|_{B^{1}_{{\infty},q}}   \geq     c     \log(c\frac{t}{\alpha^{1-\frac{1}{q}}}+1).  
\end{equation} 
on the time scale  $0\leq t \leq T(\alpha)=c\alpha^{1-\frac{1}{q}}\log(c|\log(\alpha)|) $, with c independent of $\alpha$. Thus,  at time $T(\alpha)=\alpha^{1-\frac{1}{q}} \log(c|\log(\alpha)|) \rightarrow 0 \, \text{, as} \,\, \alpha \rightarrow 0$, we have 
 
\begin{equation} \label{rL12cBesovLower}
|r L_{12}^{c}(\Omega)|_{B^{1}_{{\infty},q}}   \geq     c     \log(c |\log(\alpha)|)  \rightarrow \infty. 
\end{equation}

   \end{proposition}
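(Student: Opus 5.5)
We would argue directly from the explicit solution formula obtained in the proofs of Lemma \ref{UpperBndL12} and Proposition \ref{LowerBndL12}. Writing $\Phi^{-1}(t,r,\theta)=(re^{X(t)},\theta(t))$ with $X(t,r,\theta)=2\int_0^t L^s_{12}(\Omega)\cos(2\theta(\tau))\,d\tau$, the solution is
$$
\Omega(t,r,\theta)=r^{\alpha}e^{\alpha X}\sin\big(\alpha^{\frac1q}\log r+\alpha^{\frac1q}X\big)+\alpha^{\frac1q}r^{\alpha}e^{\alpha X}\sin(2\theta(t)).
$$
We insert this into \eqref{L12c}. The second term is odd under $\theta\mapsto-\theta$ and its pairing with $\cos(2\theta)$ vanishes, since $X$ is even and $\theta(t)$ is odd in $\theta$. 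For the first term, the relevant quantity is $\cos(2\theta)$ times the angular average. Because $\alpha X\lesssim \alpha^{\frac1q}t\cdot\alpha^{-(1-\frac1q)}\cdot\alpha^{1-\frac1q}\ll1$ on the time scale (by Lemma \ref{UpperBndL12}), we may replace $e^{\alpha X}$ by $1$ up to harmless errors. Expanding the sine gives
$$
\sin(\alpha^{\frac1q}\log r)\cos(\alpha^{\frac1q}X)+\cos(\alpha^{\frac1q}\log r)\sin(\alpha^{\frac1q}X).
$$

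The key step is to show that the $\theta$-average of $\cos(2\theta)\sin(\alpha^{\frac1q}X)$, or of $\cos(2\theta)\cos(\alpha^{\frac1q}X)$, is comparable to a quantity that grows like $\alpha^{\frac1q}\sup_r|\int_0^tL^s_{12}|$, at least in the regime where $\alpha^{\frac1q}X\lesssim1$. Here $X$ is essentially $2\cos(2\theta)\int_0^t L^s_{12}$, corrected by the rotation of $\theta(\tau)$ along hyperbolic trajectories; this correction is controlled via \eqref{FlowMapEqLBnd}. Then $\int\cos(2\theta)\sin(\alpha^{\frac1q}X)\,d\theta \gtrsim \alpha^{\frac1q}\int_0^tL^s_{12}\,d\tau$. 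By Proposition \ref{LowerBndL12} this is at least $\alpha^{\frac1q}\log(1+ct\alpha^{-(1-\frac1q)})$, and this bound holds on a set of radii whose contribution is uniform in $s$, since $L^s_{12}$ is monotone in $r$ and comparable to its value at $0$ for $r\ll1$. This yields
$$
L^c_{12}(\Omega)(t,r)\approx c\,\alpha^{\frac1q}\log\big(1+ct\alpha^{-(1-\frac1q)}\big)\int_r^1 s^{\alpha-1}\cos(\alpha^{\frac1q}\log s)\,ds+\text{errors},
$$
together with a similar $\sin(\alpha^{\frac1q}\log s)$ piece coming from the $\cos(\alpha^{\frac1q}X)-1$ correction, which is of higher order. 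The $s$-integral is computed explicitly: $\int_r^1 s^{\alpha-1}\cos(\alpha^{\frac1q}\log s)\,ds$ equals a combination of $\frac{1}{\alpha+i\alpha^{1/q}}\big(1-r^{\alpha+i\alpha^{1/q}}\big)$. Its size is about $\alpha^{-\frac1q}$, since $\alpha^{\frac1q}\gg\alpha$ for $q>1$.

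Consequently $rL^c_{12}(\Omega)(t,r)$ consists of $c\log(1+ct\alpha^{-(1-\frac1q)})$ times a function of the form $r\big(A-r^{\alpha}\cos(\alpha^{\frac1q}\log r+\phi)\big)$. The linear part $Ar$ contributes nothing to second differences. The oscillatory part $r^{1+\alpha}\sin(\alpha^{\frac1q}\log r+\phi)$ is exactly $g_2$ of Lemma \ref{Type2} with $1-\frac1k=\frac1q$, that is $\frac1k=1-\frac1q$. Lemma \ref{Type2} then gives a Besov norm $\gtrsim\alpha^{1-\frac1q-\frac1k}=\alpha^0=1$; the phase shift is handled by trigonometric identities, exactly as for $g_2$. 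Multiplying by the logarithmic prefactor gives \eqref{rL12cBesovLower}. The error terms, namely $e^{\alpha X}-1$, the angle-rotation corrections and the higher-order Taylor terms in $\alpha^{\frac1q}X$, are bounded using the upper-bound part of Lemma \ref{Type1}/\ref{Type2} scaling, each carrying an extra small factor.

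The main obstacle is the middle step. One must extract a genuinely nonvanishing $\cos(2\theta)$-mode from $\sin(\alpha^{\frac1q}\log(re^{X}))$ with a lower bound proportional to $\int_0^tL^s_{12}$, uniformly in $r$. One must also ensure that the $\theta$-dependence of $X$, which is not exactly $\cos(2\theta)$ times a function of time because trajectories move in angle, does not destroy this mode. To handle this, we would first restrict to a sector near the axes, where $\cos(2\theta(\tau))$ keeps a fixed sign along trajectories up to time $t$ (from \eqref{FlowMapEqLBnd}, trajectories starting near $\theta=0$ remain near it), and then bound the complementary sector trivially.
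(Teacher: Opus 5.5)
\textbf{There is a genuine gap, and it sits in the step you flag as the main obstacle.} It cannot be repaired along these lines. Your claimed bound $\int\cos(2\theta)\sin(\alpha^{\frac1q}X)\,d\theta\gtrsim\alpha^{\frac1q}\int_0^tL^s_{12}\,d\tau$ fails once the hyperbolic time $\sigma:=2\int_0^tL^s_{12}\,d\tau$ is large. That is exactly the regime where $\log(1+ct\alpha^{-(1-\frac1q)})$ is large.

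Here is why. $L^s_{12}$ varies in $r$ only on the scale $|\log r|\sim\alpha^{-1}$, while trajectories move $\log r$ by at most $\sigma$. So the inverse flow is, to leading order, the linear strain $(x_1,x_2)\mapsto(e^{\sigma}x_1,e^{-\sigma}x_2)$. Hence $\tan\theta(t)=e^{-2\sigma}\tan\theta$, and
\[
X=\tfrac12\log\big(\cosh2\sigma+\sinh2\sigma\cos2\theta\big)=\log\cosh\sigma+\tanh\sigma\,\cos2\theta-\tfrac12\tanh^2\sigma\,\cos4\theta+\cdots .
\]
The part of $X$ that grows like $\sigma$ is radial. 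It only shifts the phase $\alpha^{\frac1q}\log r$, and the pairing with $\cos2\theta$ annihilates it. The $\cos2\theta$ coefficient is $\tanh\sigma\le 1$. Equivalently,
\[
\int_0^{\pi/2}\cos2\theta(\tau)\cos2\theta\,d\theta=\tfrac{\pi}{4}\,\mathrm{sech}^2\sigma(\tau),
\]
whose integral against $2L^s_{12}\,d\tau$ is $\frac{\pi}{4}\tanh\sigma(t)$.

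Consequently the $\cos2\theta$ mode of $\Omega$ stays of size $\alpha^{\frac1q}$. By the scaling of Lemma \ref{Type2} with $\frac1k=1-\frac1q$, it contributes only $O(1)$ to $|rL^c_{12}|_{B^1_{\infty,q}}$. Your approximation $X\approx 2\cos2\theta\int_0^tL^s_{12}$ holds only while $\sigma\lesssim1$, and there the claimed lower bound is itself $O(1)$.

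Your sector device fails for the same reason. Inverse trajectories starting in $(\frac\pi4,\frac\pi2)$ rotate toward $\theta=0$. Apart from a sector of width about $e^{-2\sigma}$ near $\frac\pi2$, they cross the diagonal, so there $\cos2\theta(\tau)>0>\cos2\theta$. The complementary sector therefore contributes about $-\alpha^{\frac1q}\sigma/2$, which cancels the $+\alpha^{\frac1q}\sigma/2$ coming from $[0,\frac\pi4]$. It is the same size as the main term with the opposite sign, not something that can be bounded trivially.

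\textbf{The paper's proof does not supply the missing idea.} It passes this point by asserting, through \eqref{cos2thetaneg} and \eqref{ThetaInt}, that $\cos2\theta(\tau)$ keeps the sign of $\cos2\theta$ on $[\frac\pi4,\frac\pi2]$. From this it concludes that the full angular integral dominates its $[0,\frac\pi8]$ part and is $\ge c>0$. That assertion overlooks the same diagonal crossing. The exact value $\frac\pi4\,\mathrm{sech}^2\sigma$ contradicts the bound $\ge c$ once $\sigma$ is large.

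The other difference between your route and the paper's is secondary. You factor out the amplitude and apply Lemma \ref{Type2} to a global profile. The paper instead evaluates only $\Delta_h^2(rL^c_{12})$ at the origin, so that only radii in $[h,2h]$ enter. Neither version escapes the saturation of the $\cos2\theta$ mode, so proving the stated growth would require a genuinely different mechanism.
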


   \begin{proof}
   
   We begin in a similar manner as in Lemma \ref{UpperBndL12} and Proposition \ref{LowerBndL12}. We set, $\Phi^{-1}(t,r,\theta)=(\Phi^{-1}_r (t,r,\theta),\Phi^{-1}_{\theta}(r,\theta)))=(r(t),\theta(t))$. Recall from  Lemma \ref{UpperBndL12}, by studying the characteristic of the leading order model, we obtained  
 \begin{equation}\label{FlowMapEqBesov}
 (r(t))^2 |\sin(2\theta(t))|   =     r^2  |\sin(2\theta)| .
\end{equation} 
In addition, as in  Lemma \ref{UpperBndL12},  using the characteristic  of the leading order model \eqref{LOMBesov} and equation \eqref{FlowMapEqBesov}, shortening  the notation, we write  $r(t)=re^{X(t)}$, where  
 \begin{equation}\label{XflowBesov}
  X(t,r,\theta)=2\int_0^t(L_{12}(r(\tau))\cos(2\theta(\tau))) d\tau.
\end{equation} 
From \eqref{FlowMapEqBesov},  we can solve for $\cos(2\theta(t))$ and obtain: 
 \begin{equation}\label{cos2thetaplus}
  \cos(2\theta(t))   =  \sqrt{1 - \frac{r^4 (\sin^2(2\theta)))}{r(t)^4 } },  \,\,   \text{on}  \,\,   -\frac{\pi}{4}\leq \theta \leq \frac{\pi}{4}  \,\,   \text{, and on}      \,\,   \frac{3\pi}{4}\leq \theta \leq \frac{5\pi}{4}, 
 \end{equation} 
 
 and 
 \begin{equation}\label{cos2thetaneg}
  \cos(2\theta(t))   = - \sqrt{1 - \frac{r^4 (\sin^2(2\theta)))}{r(t)^4 } },  \,\,   \text{on}  \,\,  \frac{\pi}{4}\leq \theta \leq \frac{3\pi}{4}  \,\,   \text{, and on}      \,\,  \frac{5\pi}{4}\leq \theta \leq \frac{7\pi}{4}.
 \end{equation} 
 We consider the operator $L^{c}_{12}$.  Recall from \eqref{L12c} that $L^{c}_{12}$ is defined as follows:
$$
 L^{c}_{12} (\Omega)(r)=c \int_r^{1} \int_0^{\frac{\pi}{2}}  \frac{ 1}{s} \Omega(t,s,\theta) \cos(2\theta) d \theta \, ds. 
$$
Here, we used symmetry and compact support of initial data. We know that we can write the solution as follows: 
 
 $$
 \Omega(t,r,\theta)=  r^{\alpha}e^{\alpha X(t)} \sin(\alpha^{\frac{1}{q}}\log(re^{X(t)}))+\alpha^{\frac{1}{q}}  r^{\alpha}e^{\alpha X(t)} \sin(2\theta(t)).
 $$
Therefore, we can compute $L^{c}_{12}$,
  
  $$
 L^{c}_{12} (\Omega)(r)=c\int_{r}^{1}\int_{0}^{\frac{\pi}{2}}\frac{ 1}{s} \Big(  s^{\alpha}e^{\alpha X(t)} \sin(\alpha^{\frac{1}{q}}\log(se^{X(t)}))+\alpha^{\frac{1}{q}} s^{\alpha}e^{\alpha X(t)} \sin(2\theta(t)) \Big)    \cos(2\theta) d\theta ds
  $$
  First, we will simplify the expression. Due to symmetry, we have   
   $$
  \int_{r}^{1}\int_{0}^{2\pi}\frac{1}{s} \Big(  \alpha^{\frac{1}{q}}  s^{\alpha}e^{\alpha X(t)} \sin(2\theta(t)) \Big) \cos(2\theta) d\theta ds=0
  $$
  Thus, we have 
  $$
   L_{12}^{c}(\Omega)=c\int_{r}^{1}\int_{0}^{\frac{\pi}{2}}\frac{1}{s}    s^{\alpha}e^{\alpha X(t)} \sin(\alpha^{\frac{1}{q}}\log(s)+  \alpha^{\frac{1}{q}} X(t))   \cos(2\theta) d\theta ds, 
  $$
 which can be rewritten as follows:
   \begin{equation}\label{L12cPreExp}
   L_{12}^{c}(\Omega)=c\int_{r}^{1}\int_{0}^{\frac{\pi}{2}}\frac{1}{s}    s^{\alpha}e^{\alpha X(t)} \Big( \sin(\alpha^{\frac{1}{q}}\log(s)) \cos(\alpha^{\frac{1}{q}} X(t)) + \cos(\alpha^{\frac{1}{q}}\log(s))  \sin(\alpha^{\frac{1}{q}} X(t))  \Big)  \cos(2\theta) d\theta ds.
 \end{equation} 
   From Lemma \ref{UpperBndL12}, we have that  $$ |L^s_{12}(\Omega)(t)|_{L^{\infty}} \leq \frac{c}{\alpha^{1-\frac{1}{q}}}, $$ on the time scale    $0\leq t \leq T(\alpha)=c\alpha^{1-\frac{1}{q}}\log(c|\log(\alpha)|) $. Hence, from \eqref{XflowBesov}, on the same time scale, we have 
 
   $$|X(t)|_{L^{\infty}} \leq c \frac{t}{\alpha^{1-\frac{1}{q}}}. $$
 Therefore, by taking $\alpha$ sufficiently small, we   have    $$\alpha |X(t)|, \alpha^{\frac{1}{q}} |X(t)| < 1.$$ Thus, we can expand the following terms as follows:
\begin{align}
\cos(\alpha^{\frac{1}{q}} X(t))) &= 1- \alpha^{\frac{2}{q}} (X(t))^2 + \dots  \label{ExpC} \\
\sin(\alpha^{\frac{1}{q}} X(t)))&= \alpha^{\frac{1}{q}} X(t) +  \alpha^{\frac{3}{q}}  (X(t))^3 + \dots  \label{ExpS}\\
e^{\alpha X}&=1+ \alpha X  + \dots \label{ExpE}  
\end{align}
 Hence, from \eqref{L12cPreExp}  using \eqref{ExpC}, \eqref{ExpS}, and \eqref{ExpE},  we have 
$$
   L_{12}^{c}(\Omega)=c\int_{r}^{1}\int_{0}^{\frac{\pi}{2}}\frac{s^{\alpha}}{s} \Big( \sin(\alpha^{\frac{1}{q}}\log(s))   + \cos(\alpha^{\frac{1}{q}}\log(s) )  \alpha^{\frac{1}{q}} X(t)   + \alpha^{\frac{2}{q}}  X(t)E(s,X)  \Big)  \cos(2\theta) d\theta ds.
  $$
    The third term which has $E(s,X)$, which is bounded $|E(s,X)|_{\infty} \leq c$, contains all the remaining terms from the expansion. This term is an error term, which we will soon show is controlled.  
   Here, we observe that since the first term is radial, we have  
   $$
  \int_{r}^{1}\int_{0}^{\frac{\pi}{2}}\frac{s^{\alpha}}{s} \Big( \sin(\alpha^{\frac{1}{q}}\log(s))    \Big)  \cos(2\theta) d\theta ds=0.
  $$
  Hence, we obtain      
\begin{equation} \label{L12cExp2}
   L_{12}^{c}(\Omega)=c\int_{r}^{1}\int_{0}^{\frac{\pi}{2}} \Big(  \frac{s^{\alpha}}{s} \cos(\alpha^{\frac{1}{q}}\log(s) )  \alpha^{\frac{1}{q}} X(t)   + \alpha^{\frac{2}{q}} \frac{s^{\alpha} X(t) E(s,X)}{s}  \Big)   \cos(2\theta) d\theta ds.
\end{equation}
Note that the goal of the proposition is to show that $|r L_{12}^{c}(\Omega)|_{B^{1}_{\infty, q}}$  grows arbitrarily large on our time scale $T(\alpha)=c \alpha^{1-\frac{1}{q}}\log(c|\log(\alpha)|).$ Furthermore, recall that the second finite difference is defined as follows: 
$$
\Delta^2_h f(x)= f(x+2h)-2 f(x+h)+f(x), \,\, \text{and} \,\,  |\Delta^2_h f(x)|_{L^{\infty}}=\sup_{x}|f(x+2h)-2 f(x+h)+f(x)|.
$$
Therefore, we have
$$ | \Delta^2_h(r L_{12}^{c}(\Omega)(r))|_{L^{\infty}} \geq |2h L_{12}^{c}(\Omega)(2h)-2hL_{12}^{c}(\Omega)(h) |= | 2h \big( L_{12}^{c}(\Omega)(2h)- L_{12}^{c}(\Omega)(h) \big)  |.$$ 
 Hence, this can be written as:

 $$
   | \Delta^2_h(r L_{12}^{c}(\Omega)(r))|_{L^{\infty}} \geq \Big|c 2h \int_{h}^{2h} \int_{0}^{\frac{\pi}{2}} \Big(  \frac{r^{\alpha}}{r} \cos(\alpha^{\frac{1}{q}}\log(r) )  \alpha^{\frac{1}{q}} X(t)   +   \alpha^{\frac{2}{q}} \frac{r^{\alpha} X(t) E(r,X)}{r}  \Big)  \cos(2\theta) d\theta dr \Big|.
  $$
 We now note that since the second term has an $\alpha^{\frac{2}{q}}$ in front, it will be of size $\alpha^{\frac{1}{q}}|X(t)|_{L^{\infty}} $ in $B^{1}_{\infty,q}$. Since $\alpha^{\frac{1}{q}}|X(t)|_{L^{\infty}}\leq c \alpha^{\frac{1}{q}}|\log(\alpha)|$, on our time scale,  the error term will be of size $\alpha^{\frac{1}{q}}|\log(\alpha)|$ in  $B^{1}_{\infty,q}$ which goes to zero as $\alpha \rightarrow 0 $. Hence,  the second term (error term) is controlled and can be absorbed. Therefore, to shorten the notation, we will drop the second term and just write 
   $$
   | \Delta^2_h(r L_{12}^{c}(\Omega)(r))|_{L^{\infty}} \geq \Big|c 2h \int_{h}^{2h} \int_{0}^{\frac{\pi}{2}}\frac{r^{\alpha}}{r}  \cos(\alpha^{\frac{1}{q}}\log(r) )  \alpha^{\frac{1}{q}} X(t)       \cos(2\theta) d\theta dr \Big|.
  $$
  Recall from \eqref{XflowBesov}, we have 
  
  $$
   | \Delta^2_h(r L_{12}^{c}(\Omega)(r))|_{L^{\infty}} \geq \Big|2h \int_{h}^{2h} \int_{0}^{\frac{\pi}{2}}  \frac{r^{\alpha}}{r}  \cos(\alpha^{\frac{1}{q}}\log(r) )  \alpha^{\frac{1}{q}}  \int_0^t  2 L^{s}_{12}(\Omega(\tau,r))\cos(2\theta(\tau))  d\tau   \cos(2\theta) d\theta   dr \Big|.
  $$
First, we will focus on  the radial variable.  Changing variables by setting $u=\alpha^{\frac{1}{q}}\log(r)$, we have $  r=e^{ \frac{u}{\alpha^{\frac{1}{q}}}} $. Hence, the integral becomes

  $$
  \Big|2h  \int_{\alpha^{\frac{1}{q}}\log(h)}^{\alpha^{\frac{1}{q}}\log(2h)}  \int_{0}^{\frac{\pi}{2}}   e^{\alpha^{1-\frac{1}{q}}u}  \cos(u )      \int_0^t  2 L^{s}_{12}(\Omega(\tau,e^{ \frac{u}{\alpha^{\frac{1}{q}}}}))\cos(2\theta(\tau))  d\tau   \cos(2\theta) d\theta   du \Big|.
  $$
We observe that this can be rewritten as follows:

  $$
  \Big|\frac{\alpha^{\frac{1}{q}}\log(2)}{\alpha^{\frac{1}{q}}\log(2)}2h  \int_{\alpha^{\frac{1}{q}}\log(h)}^{\alpha^{\frac{1}{q}}\log(h)+ \alpha^{\frac{1}{q}}\log(2)}   \int_{0}^{\frac{\pi}{2}}   e^{\alpha^{1-\frac{1}{q}}u}  \cos(u )      \int_0^t  2 L^{s}_{12}(\Omega(\tau,e^{ \frac{u}{\alpha^{\frac{1}{q}}}}))\cos(2\theta(\tau))  d\tau   \cos(2\theta) d\theta   du \Big|
  $$
  Using Lebesgue differentiation theorem, by  taking $\alpha$ sufficiently small, we obtain

\begin{align}\label{L12dThetadt}
   | \Delta^2_h(r L_{12}^{c}(\Omega)(r))|_{L^{\infty}} \geq  c  \Big|\alpha^{\frac{1}{q}}  2h    \int_{0}^{\frac{\pi}{2}}    h^{\alpha}\cos(\alpha^{\frac{1}{q}}\log(h))      \int_0^t  2 L^{s}_{12}(\Omega(\tau,h))\cos(2\theta(\tau))  d\tau   \cos(2\theta) d\theta     \Big|  
  \end{align}
 Second, we will now focus on the angular integral. Recall from  \eqref{cos2thetaplus} and \eqref{cos2thetaneg}, we have

$$
 \cos(2\theta(\tau))   =  \sqrt{1 - \frac{r^4 (\sin^2(2\theta)))}{r(\tau)^4 } }, \,\,   \text{on}  \,\,  0 \leq \theta \leq \frac{\pi}{4}, \, \text{and} \,\, \cos(2\theta(\tau))   = - \sqrt{1 - \frac{r^4 (\sin^2(2\theta)))}{r(\tau)^4 } } \,\,   \text{on}  \,\,    \frac{\pi}{4}\leq \theta \leq \frac{\pi}{2}.
 $$
Thus, observing the $\theta$ integral in $\eqref{L12dThetadt}$, we have

\begin{equation} \label{ThetaInt}
\int_{0}^{\frac{\pi}{2}}   \cos(2\theta(\tau))     \cos(2\theta) d\theta  =   \int_{0}^{\frac{\pi}{4}}   \sqrt{1 - \frac{s^4  \sin^2(2\theta)}{s(\tau)^4 } } \cos(2\theta)  d\theta    -     \int_{\frac{\pi}{4}}^{\frac{\pi}{2}} \sqrt{1 - \frac{s^4 \sin^2(2\theta)}{s(\tau)^4 } } \cos(2\theta)  d\theta   
\end{equation} 
 We now note that  both terms are positive. Thus, the integral in the $\theta$ variable is signed (positive). Hence, we have

  \begin{align*} 
 | \Delta^2_h(r L_{12}^{c}(\Omega)(r))|_{L^{\infty}} &\geq  \Big|\alpha^{\frac{1}{q}}       h^{1+\alpha}\cos(\alpha^{\frac{1}{q}}\log(h))      \int_0^t  2 L^{s}_{12}(\Omega(\tau,h))  \int_{0}^{\frac{\pi}{2}}  \cos(2\theta(\tau))    \cos(2\theta) d\theta    d\tau   \Big| \\  
  &\geq c  \alpha^{\frac{1}{q}}  2       h^{1+\alpha} | \cos(\alpha^{\frac{1}{q}}\log(h))\ |       \int_0^t  2 L^{s}_{12}(\Omega(\tau,h))  \int_{0}^{\frac{\pi}{8}}  \cos(2\theta(\tau))    \cos(2\theta) d\theta    d\tau 
  \end{align*}
 Recall that the dynamic of the trajectories is hyperbolic. On $[0,\frac{\pi}{8}]$, the angular component of the inverse flow map is decreasing and the radial components  is increasing. Therefore, we have   
  $$
 \int_{0}^{\frac{\pi}{8}}  \cos(2\theta(\tau))    \cos(2\theta) d\theta = \int_{0}^{\frac{\pi}{8}}   \sqrt{1 - \frac{r^4  \sin^2(2\theta)}{r(\tau)^4 } } \cos(2\theta)  d\theta  \geq  c>0 $$ 
 Hence, we have 
   \begin{equation}\label{L12cFD}     | \Delta^2_h(r L_{12}^{c}(\Omega)(r))|_{L^{\infty}} \geq 
  c  \alpha^{\frac{1}{q}}         h^{1+\alpha} | \cos(\alpha^{\frac{1}{q}}\log(h))\ |       \int_0^t  L^{s}_{12}(\Omega(\tau,h))     d\tau.  
    \end{equation}
 Now we just need to compute the Besov norm $B^{1}_{\infty,q}$ to obtained our result.   Namely,
$$
|r L_{12}^{c}(\Omega)(t,r)|^q_{B^{1}_{\infty,q}}\geq  c  \int_{0}^{1}  | \Delta^2_h(r L_{12}^{c}(\Omega)(r))|^q_{L^{\infty}} \frac{1}{h^{2+q}} h   dh   \geq  c  \int_{0}^{1}  \alpha     h^{q\alpha} | \cos(\alpha^{\frac{1}{q}}\log(h))\ |^q      ( \int_0^t  L^{s}_{12}(\Omega(\tau,h))     d\tau  )^q   \frac{1}{h}    dh .
$$
 Therefore, from Proposition \ref{LowerBndL12}, and estimating the integral  in a similar manner to  Lemma \ref{Type2}, we have 
 $$
|r L_{12}^{c}(\Omega)(T)|^q_{B^{1}_{\infty,q}}   \geq     c     (\log(c \frac{t}{\alpha^{1-\frac{1}{q}}}+1) )^q .  
$$
Thus,  at $T(\alpha)=\alpha^{1-\frac{1}{q}} \log(c |\log(\alpha)|) \rightarrow 0$,  as $\alpha \rightarrow 0$, we have 

$$
|r L_{12}^{c}(\Omega)(T)|_{B^{1}_{\infty,q}}  \geq     c     (\log(c |\log(\alpha)|+1) ) \rightarrow \infty,  
$$
and this completes the proof.

   \end{proof}

\section{Useful Estimates on the Leading Order Model  }\label{UsefulLOME}
In this section, we prove  estimates for the leading order model, which we will use in Section \ref{RemainderrEstS} to control the error between the leading order model and the full 2d Euler equation. In Lemma \ref{LOMLcUpbnd}, we establish  upper bounds on $L^c_{12}$ operator. Then in Lemma \ref{LOMHodlerBnd} and Corollary \ref{corOmegaCirc}, we obtain H\"older type estimates on vorticity of the leading order model.

   \begin{lemma} \label{LOMLcUpbnd}

  For any $1<q<\infty$,  let $\Omega(t)$ be a solution to the leading order model:

\begin{equation*} 
 \dt \Omega+  2 L^s_{12}(\Omega) \sin(2\theta)  \dth \Omega- 2 rL^s_{12}(\Omega) \cos(2\theta) \dr \Omega =0,
\end{equation*} 

 with initial data: 
\begin{equation*}
\Omega_0(r,\theta) =r^{\alpha} \sin(\alpha^{\frac{1}{q}}\log(r))+\alpha^{\frac{1}{q}} r ^{\alpha} \sin(2\theta)
\end{equation*}

compactly supported on the ball of radius 1.  Then for $L^c_{12}(\Omega)$  defined as in \eqref{L12c},   there exist constants $c$ independent of $\alpha$ such that we have the following estimates 
 
\begin{equation} \label{L12cUpbnd}
| L_{12}^{c}(\Omega)(t)|_{L_{\infty}}  \leq    \frac{ct}{\alpha^{1-\frac{1}{q}}}, \,\, \text{and}  \,\, | L_{12}^{c}(\Omega)(t)|_{C^{\alpha}}  \leq    \frac{ct}{\alpha^{1-\frac{1}{q}}}.
\end{equation}

   \end{lemma}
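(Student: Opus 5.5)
We follow the representation already used in Lemma \ref{UpperBndL12} and Proposition \ref{L12cBesovLowBnd}. Writing $\Phi^{-1}(t,r,\theta)=(r(t),\theta(t))$ with $r(t)=re^{X(t)}$ and $X$ as in \eqref{XflowBesov}, the solution is
$$
\Omega(t,r,\theta)=r^{\alpha}e^{\alpha X}\sin(\alpha^{\frac1q}\log r+\alpha^{\frac1q}X)+\alpha^{\frac1q}r^{\alpha}e^{\alpha X}\sin(2\theta(t)).
$$
Since $L^c_{12}$ tests against $\cos(2\theta)$, the second term contributes nothing by the same odd symmetry used in the proof of Proposition \ref{L12cBesovLowBnd}. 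At $t=0$ the first term is radial and so contributes zero. Hence
$$
L^c_{12}(\Omega)(t,r)=c\int_r^1\int_0^{\pi/2}\frac{s^{\alpha}}{s}\Big(e^{\alpha X}\sin(\alpha^{\frac1q}\log s+\alpha^{\frac1q}X)-\sin(\alpha^{\frac1q}\log s)\Big)\cos(2\theta)\,d\theta\,ds .
$$
Subtracting the radial part costs nothing, because it integrates to zero against $\cos 2\theta$.

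For the $L^\infty$ bound, apply the mean value theorem in $X$ to the bracket. This gives the pointwise bound
$$
\big|e^{\alpha X}\sin(\alpha^{\frac1q}\log s+\alpha^{\frac1q}X)-\sin(\alpha^{\frac1q}\log s)\big|\le c(\alpha+\alpha^{\frac1q})|X|,
$$
valid because $\alpha|X|,\alpha^{\frac1q}|X|<1$ on the time scale. By Lemma \ref{UpperBndL12}, $|X(t)|_{L^\infty}\le ct\,\alpha^{\frac1q-1}$. Therefore
$$
|L^c_{12}(\Omega)(t)|_{L^\infty}\le c\,\alpha^{\frac1q}\,\frac{t}{\alpha^{1-\frac1q}}\int_0^1 s^{\alpha-1}\,ds=c\,\alpha^{\frac1q}\,\frac{t}{\alpha^{1-\frac1q}}\cdot\frac1\alpha .
$$
This is too lossy by a factor $\alpha^{\frac2q-1}$ compared with the claim.

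So the plan must exploit the structure more carefully, and this is the main obstacle. Write the radial derivative
$$
\partial_r L^c_{12}(\Omega)(t,r)=-\frac{c}{r}\int_0^{\pi/2}\Omega(t,r,\theta)\cos(2\theta)\,d\theta .
$$
The angular integral of $\Omega$ against $\cos2\theta$ is bounded pointwise by $c\,r^{\alpha}\alpha^{\frac1q}|X|\le c\,r^{\alpha}t\,\alpha^{\frac2q-1}$. The aim is to improve this by one factor of $\alpha^{-\frac1q}$ in the right direction through integration in $s$: the oscillation $\cos(\alpha^{\frac1q}\log s)$ in the leading term $\alpha^{\frac1q}\cos(\alpha^{\frac1q}\log s)X$ (as in \eqref{L12cExp2}) gains, after the substitution $u=\alpha^{\frac1q}\log s$, a factor $\alpha^{-\frac1q}$ instead of $\alpha^{-1}$. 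This is exactly the computation of Lemma \ref{Type2}, where
$$
\int_{-\infty}^0 e^{\alpha^{1-\frac1q}u}\cos u\,du=O(1)\quad\text{rather than}\quad O(\alpha^{-(1-\frac1q)}).
$$
To make this rigorous, integrate by parts in $u$, using that $X$ varies slowly in $\log s$. Its $s\partial_s$ derivative is controlled by $t\sup|r\partial_r L^s_{12}|$, and from \eqref{L12smp}, $|r\partial_rL^s_{12}|\le c\,\alpha^{\frac1q}$. Combining this gain with $|X|\le ct\,\alpha^{\frac1q-1}$ yields $|L^c_{12}|_{L^\infty}\le ct\,\alpha^{\frac1q-1}$. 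The $E(s,X)$ remainder carries an extra $\alpha^{\frac1q}|X|\lesssim\alpha^{\frac1q}|\log\alpha|$ and is absorbed.

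For the $C^{\alpha}$ bound, the plan has two parts. For the $r$-dependence, use the derivative formula above to obtain
$$
|L^c_{12}(r_1)-L^c_{12}(r_2)|\le c\,t\,\alpha^{\frac2q-1}\Big|\int_{r_2}^{r_1}s^{\alpha-1}\,ds\Big|,
$$
and then bound the last integral by $\alpha^{-1}|r_1^\alpha-r_2^\alpha|\le\alpha^{-1}|r_1-r_2|^\alpha$. Doing this naively again loses powers of $\alpha$, so the same oscillatory integration by parts must be performed on the increment. Concretely, bound $|L^c_{12}(r_1)-L^c_{12}(r_2)|$ by the minimum of twice the $L^\infty$ bound and the derivative estimate, and interpolate: for $|r_1-r_2|\ge\alpha^{\frac1q}r$ use the $L^\infty$ bound, and otherwise use the derivative bound. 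Since $L^c_{12}$ is radial, no angular Hölder estimate is needed. The key difficulty throughout is obtaining the oscillation gain uniformly while $X$ depends on $(s,\theta,t)$ through the nonlocal flow; I expect this to require the slow-variation bound on $s\partial_sX$ described above.
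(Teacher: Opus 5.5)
Your plan is correct and is essentially the paper's argument. Both pass to the logarithmic radial variable and gain the factor $\alpha^{1-\frac{1}{q}}$ over the naive bound from the oscillation of $\cos(\alpha^{\frac{1}{q}}\log s)$, with the amplitude controlled by Lemma \ref{UpperBndL12}. You are more explicit than the paper, which just asserts the $c/\lambda$ decay, about the amplitude regularity this needs, and your bound $|s\partial_s X|\lesssim t\,\alpha^{\frac{1}{q}}$ does hold. In the variables $\log r+\frac{1}{2}\log|\sin 2\theta|$ (conserved) and $\frac{1}{2}\log|\tan\theta|$ the flow is a translation perturbed only through $r\partial_r L^s_{12}=O(\alpha^{\frac{1}{q}})$, so the Jacobian does not grow exponentially.

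There are two small corrections. First, the naive mean-value bound loses a factor $\alpha^{-(1-\frac{1}{q})}$, not $\alpha^{\frac{2}{q}-1}$. Second, in the $C^{\alpha}$ step the global bound $2|L^c_{12}|_{L^{\infty}}$ cannot supply the factor $|r_1-r_2|^{\alpha}$ once $|r_1-r_2|\ll e^{-1/\alpha}$, and your threshold $|r_1-r_2|\ge\alpha^{\frac{1}{q}}r$ allows this for small $r$. Instead, use the derivative bound whenever $|r_1-r_2|\le\min(r_1,r_2)$. Otherwise use the localized estimate $|L^c_{12}(r_1)-L^c_{12}(r_2)|\lesssim t\,\alpha^{\frac{1}{q}-1}\max(r_1,r_2)^{\alpha}$, which follows from running your integration by parts on $[r_1,r_2]$.
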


 \begin{proof} We start in a similar manner as in Lemma   \ref{UpperBndL12},  Proposition \ref{LowerBndL12}, and Proposition  \ref{L12cBesovLowBnd}. Recall by setting the inverse flow map $\Phi^{-1}(t,r,\theta)=(r(t),\theta(t))$ as in  Lemma \ref{UpperBndL12}, and by studying the characteristic of the leading order model, we obtained  
 \begin{equation}
 (r(t))^2 |\sin(2\theta(t))|   =     r^2  |\sin(2\theta)| 
\end{equation} 
Similarly, as in  Lemma \ref{UpperBndL12},  shortening  the notation, we   write

 \begin{equation}\label{Xflow4L12cUpbnd}
   r(t)=re^{X(t)}, \quad \text{where} \quad X(t,r,\theta)=2\int_0^t(L_{12}(r(\tau))\cos(2\theta(\tau))) d\tau
\end{equation} 
We compute $L^{c}_{12} (\Omega)(r)$, also proceeding as in Proposition \ref{L12cBesovLowBnd}, where we use symmetry, compact support of the initial data, and fact that terms which are either radial or contain $\sin(2\theta(t))$ vanish, we have 
\begin{align}\label{L12cUpEx}
   L_{12}^{c}(\Omega)=c\int_{r}^{1}\int_{0}^{\frac{\pi}{2}}\frac{s^{\alpha}}{s} \Big(   \cos(\alpha^{\frac{1}{q}}\log(s) )  \alpha^{\frac{1}{q}} X(t)   + \alpha^{\frac{2}{q}}  X(t) E(s,X)  \Big)  \cos(2\theta) d\theta ds.
\end{align}
We obtained the above expression after expanding the terms $\cos(\alpha^{\frac{1}{q}} X(t))), \sin(\alpha^{\frac{1}{q}} X(t))), e^{\alpha X}$ by using Lemma \ref{UpperBndL12}, where  we have $|X(t)|_{L^{\infty}} \leq t \alpha^{-1+\frac{1}{q}}$ on the time scale $0\leq t \leq T(\alpha)=\alpha^{1-\frac{1}{q}}\log(c|\log(\alpha)|)$ (see Proposition \ref{L12cBesovLowBnd} for more details). In equation \eqref{L12cUpEx}, the term $E(s,X)$, which is bounded, contains the remaining terms from the expansion. Since this term has $\alpha^{\frac{2}{q}}$ in front, it is an error term that can be absorbed. Therefore, we have:

   $$
  | L_{12}^{c}(\Omega)| \leq \Big| c\int_{r}^{1}\int_{0}^{\frac{\pi}{2}}\frac{s^{\alpha}}{s}    \cos(\alpha^{\frac{1}{q}}\log(s) )  \alpha^{\frac{1}{q}} X(t)    \cos(2\theta) d\theta ds  \Big|. 
  $$
 From \eqref{Xflow4L12cUpbnd} we have

   $$
  | L_{12}^{c}(\Omega)| \leq \Big| c \int_0^t \int_{r}^{1}\int_{0}^{\frac{\pi}{2}}\frac{s^{\alpha}}{s}    \cos(\alpha^{\frac{1}{q}}\log(s) ) \alpha^{\frac{1}{q}}   2L^{s}_{12}(s(\tau))\cos(2\theta(\tau))      \cos(2\theta)  d\theta ds d\tau  \Big|. 
  $$
 To simplify notation, we write:
   $$
  \Theta(t,s):=\int_{0}^{\frac{\pi}{2}} \cos(2\theta(\tau))      \cos(2\theta)  d\theta. 
  $$
We know from Proposition \ref{L12cBesovLowBnd} and equation  \eqref{ThetaInt} that  this integral   is positive. Thus, we can write

\begin{align}\label{L12cdsdtau}
  | L_{12}^{c}(\Omega)| \leq \Big| c \int_0^t  \int_{r}^{1}  \frac{s^{\alpha}}{s}    \cos(\alpha^{\frac{1}{q}}\log(s) ) \alpha^{\frac{1}{q}} 2L^{s}_{12}(s(\tau))   \Theta(t,s)  ds d\tau   \Big|.
\end{align}
 Now let us focus on the  radial integral. We change variables $u= \frac{ \log(s)}{\log(r)} \implies s=e^{ \log(r) u}   $. We do this in order to recast \eqref{L12cdsdtau}  into an oscillatory  integral. Hence, we have  
\begin{align*} 
  \int_{r}^{1} \frac{s^{\alpha}}{s}    \cos(\alpha^{\frac{1}{q}}\log(s) )& \alpha^{\frac{1}{q}} 2L^{s}_{12}(s(\tau))   \Theta(\tau,s)   ds\\&=-c  \log(r)  \int_{0}^{1}e^{\alpha \log(r)u}    \cos(\alpha^{\frac{1}{q}}\log(r) u)  \alpha^{\frac{1}{q}}     L^s_{12}(e^{ \log(r) u(\tau)} )   \Theta(\tau ,e^{ \log(r) u}  )  du . 
\end{align*}
 Setting   $\lambda=\alpha^{\frac{1}q}|\log(r)|$, we can further rewrite this as:  
  \begin{align}\label{L12cOscB}
  \int_{r}^{1} \frac{s^{\alpha}}{s}    \cos(\alpha^{\frac{1}{q}}\log(s) ) \alpha^{\frac{1}{q}} 2L^{s}_{12}(s(\tau))   \Theta(\tau,s)   ds= c \frac{\lambda}{\alpha^{\frac{1}{q}}}  \int_{0}^{1}e^{-\alpha^{1-\frac{1}{q}}  \lambda u}    \cos( \lambda u)  \alpha^{\frac{1}{q}}     L^s_{12}(e^{ \log(r) u(\tau)} )   \Theta(\tau ,e^{ \log(r) u}  )  du . 
\end{align}
  Now we have $  |\Theta(t,s)|_{L^{\infty}} \leq c$ is bounded uniformly, and from Lemma \ref{UpperBndL12}, $  \alpha^{1-\frac{1}{q}} L_{12}(\Omega)$ is also bounded uniformly,   $\alpha^{1-\frac{1}{q}} |   L_{12} |_{L^{\infty}}<c$. Thus,  as  $r \rightarrow 0$,  we have $\lambda=\alpha^{\frac{1}{q}}|\log(r)| \rightarrow \infty$. Therefore,  
  
    $$
  \bigg|  \int_{0}^{1} e^{-\alpha^{1-\frac{1}{q}}  \lambda u}    \cos( \lambda u)  \alpha^{1-\frac{1}{q}}     L^s_{12}(e^{ \log(r) u(\tau)} )   \Theta(\tau ,e^{ \log(r) u}  )  du       \bigg|    \leq \frac{c}{\lambda}.
  $$
Hence, we have 
\begin{align}\label{L12cOscF}
  \bigg|  c \frac{\lambda}{\alpha^{1-\frac{1}{q}}}    \int_{0}^{1} e^{-\alpha^{1-\frac{1}{q}}  \lambda u}    \cos( \lambda u)  \alpha^{1-\frac{1}{q}}     L^s_{12}(e^{ \log(r) u(\tau)} )   \Theta(\tau ,e^{ \log(r) u}  )  du       \bigg|    \leq \frac{c}{\alpha^{1-\frac{1}{q}}}. 
\end{align}
  Thus from \eqref{L12cdsdtau},  \eqref{L12cOscB}, and  \eqref{L12cOscF}, we have our desired result:

     $$
  | L_{12}^{c}(\Omega)|_{L^{\infty}}   \leq \frac{ct}{\alpha^{1-\frac{1}{q}}},
  $$
  the $C^{\alpha}$ estimate follows similarly, and hence this completes the proof.

  \end{proof}

  \begin{lemma} \label{LOMHodlerBnd}

    For any $1<q<\infty$,   let $\Omega(t)$ be a solution to the leading order model:

\begin{equation*} 
 \dt \Omega+  2 L^s_{12}(\Omega) \sin(2\theta)  \dth \Omega- 2 rL^s_{12}(\Omega) \cos(2\theta) \dr \Omega =0,
\end{equation*} 

 with initial data: 
\begin{equation*} 
\Omega_0(r,\theta) =r^{\alpha} \sin(\alpha^{\frac{1}{q}}\log(r))+\alpha^{\frac{1}{q}} r ^{\alpha} \sin(2\theta)
\end{equation*}

compactly supported on the ball of radius 1. Then,   there exist constants $c$ independent of $\alpha$ such that we have the following estimates 
 
\begin{equation}  
| \dth \Omega|_{C^{\alpha}}  \leq  c   \alpha^{\frac{1}{q}} e^{\frac{ct}{\alpha^{1-\frac{1}{q}}}}, \,\,\, \text{and}  \,\,\, | r \dr \Omega|_{C^{\alpha}}  \leq     c   \alpha^{\frac{1}{q}} e^{\frac{ct}{\alpha^{1-\frac{1}{q}}}}.
\end{equation}

   \end{lemma}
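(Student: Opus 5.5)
We use the explicit representation of the solution along the inverse flow map, as in Lemma \ref{UpperBndL12}, and differentiate it directly. Write $\Phi^{-1}(t,r,\theta)=(r(t),\theta(t))$ with $r(t)=re^{X(t,r,\theta)}$, where $X$ is given by \eqref{Xflow4L12cUpbnd}. Then
$$\Omega(t,r,\theta)=r^{\alpha}e^{\alpha X}\sin\big(\alpha^{\frac1q}\log r+\alpha^{\frac1q}X\big)+\alpha^{\frac1q}r^{\alpha}e^{\alpha X}\sin(2\theta(t)).$$
The strategy is to write $\dth\Omega$ and $r\dr\Omega$ by the chain rule in terms of $\dth X$, $r\dr X$, $\dth(\theta(t))$ and $r\dr(\theta(t))$. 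Each of these is then estimated in $C^{\alpha}$ by a Gronwall argument on the characteristic ODEs \eqref{RadialTraj}--\eqref{AnguarTraj}.

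\textbf{Step 1: derivatives of the profile.} Differentiating the first term in $\theta$ gives
$$r^{\alpha}e^{\alpha X}\Big(\alpha\sin(\cdot)+\alpha^{\frac1q}\cos(\cdot)\Big)\dth X.$$
Its prefactor is $O(\alpha^{\frac1q})$, which is why the claimed bound carries $\alpha^{\frac1q}$ even though the radial part of $\Omega_0$ is $O(1)$.

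Differentiating the first term with $r\dr$ gives
$$r^{\alpha}e^{\alpha X}\Big(\alpha\sin(\cdot)+\alpha^{\frac1q}\cos(\cdot)\Big)(1+r\dr X).$$
This is again $O(\alpha^{\frac1q})$ times $(1+|r\dr X|)$.

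The second term contributes
$$\alpha^{\frac1q}r^{\alpha}e^{\alpha X}\Big(\alpha(\cdot)\sin(2\theta(t))+2\cos(2\theta(t))\,\partial\theta(t)\Big),$$
where $\partial$ denotes either $\dth$ or $r\dr$. The factors $r^{\alpha}$, $e^{\alpha X}$ and the trigonometric terms all have $C^{\alpha}$ norm bounded independently of $\alpha$, using $\alpha|X|\le1$ on the time scale from Lemma \ref{UpperBndL12}. It therefore suffices to show
$$|\dth X|_{C^{\alpha}\cap L^\infty},\ |r\dr X|_{C^{\alpha}\cap L^\infty},\ |\dth\theta(t)|_{C^\alpha\cap L^\infty},\ |r\dr\theta(t)|_{C^\alpha\cap L^\infty}\le e^{ct/\alpha^{1-\frac1q}},$$
and then use that $C^{\alpha}$ is an algebra.

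\textbf{Step 2: linearized flow.} The velocity field of the model, written in the variables $(\log r,\theta)$, is
$$\big(-2L^s_{12}(r)\cos2\theta,\ 2L^s_{12}(r)\sin2\theta\big).$$
Its derivatives in $(\log r,\theta)$ involve $L^s_{12}$ and $r\dr L^s_{12}$. We have
$$r\dr L^s_{12}(r)=-\frac{1}{4\pi}\int_0^{2\pi}\Omega(r,\theta)\sin2\theta\,d\theta,$$
which by \eqref{L12smp0} is $O(\alpha^{\frac1q})$. Together with Lemma \ref{UpperBndL12}, this makes the Jacobian of the vector field $O(\alpha^{-1+\frac1q})$ in $L^\infty$. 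Differentiating the characteristic equations in the initial variables and applying Gronwall gives
$$|\nabla_{(\log r,\theta)}\Phi^{-1}|\le e^{ct/\alpha^{1-\frac1q}}.$$
This bounds $\dth X$, $r\dr X$ and the angular derivatives in $L^\infty$.

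\textbf{Step 3: Hölder seminorms (main obstacle).} For the $C^{\alpha}$ seminorms of these derivatives, we differentiate the variational equation once more, in Hölder form. This requires $C^{\alpha}$ control of $L^s_{12}$ and of $r\dr L^s_{12}$, with the correct power of $\alpha$. Here $r\dr L^s_{12}$ is the $\sin2\theta$ mode of $\Omega$, so its $C^\alpha$ norm is bounded by $|\Omega|_{C^\alpha}$ restricted to the angular component. This should be $O(\alpha^{\frac1q})$, since the radial part integrates to zero against $\sin2\theta$.

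The Gronwall argument for Hölder quotients then gives a bound $e^{ct/\alpha^{1-\frac1q}}$ on $|\nabla\Phi^{-1}|_{C^\alpha}$, possibly after enlarging $c$. Care is needed because the Hölder seminorm in physical variables near $r=0$ differs from the one in $(\log r,\theta)$. The plan is to use the $r^{\alpha}$ weight in $\Omega$ to absorb this loss, as in the $\mathring{C}^{1,\alpha}$ framework of \cite{EJ3}.

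If the bootstrap for the Hölder norm of $r\dr L^s_{12}$ does not close directly, the fallback is a joint bootstrap. We would assume $|\dth\Omega|_{C^\alpha}+|r\dr\Omega|_{C^\alpha}\le 10c\,\alpha^{\frac1q}e^{ct/\alpha^{1-\frac1q}}$, feed this into the estimate on the velocity derivatives, and recover the bound with constant $c$.
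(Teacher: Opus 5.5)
Your proposal follows the paper's route. The paper notes that $\partial_\theta\Omega_0$ and $r\partial_r\Omega_0$ are $O(\alpha^{1/q})$ in $C^\alpha$ and invokes ``transport estimates'' with the velocity-gradient bound from Lemma~\ref{UpperBndL12}, which is what your chain rule on $\Omega_0\circ\Phi^{-1}$ plus Gronwall for the flow derivatives carries out (the paper leaves implicit your observation that $r\partial_r L^s_{12}=O(\alpha^{1/q})$, which the full Jacobian bound needs). One caution: the factors $\sin(\alpha^{1/q}(\log r+X))$, $e^{\alpha X}$, $\cos(2\theta(t))$, $\partial_\theta X$ and $\partial_\theta\theta(t)$ are not individually in $C^\alpha(\mathbb{R}^2)$, since they oscillate or tend to non-constant functions of $\theta$ as $r\to0$. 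So the Step 1 reduction via the algebra property is not valid as stated; you must keep the weight $r^\alpha e^{\alpha X}=r(t)^\alpha$ attached and measure H\"older regularity in $(\log r,\theta)$ on unit scales, using $|r^\alpha F|_{C^\alpha}\lesssim |F|_{L^\infty}+[F]_{C^\alpha(\log r,\theta)}$, as you already anticipate in Step 3.
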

   
   \begin{proof}
   We observe that 
   $$
   |\dth  \Omega_{0}|_{C^{\alpha}} \leq c \alpha^{\frac{1}{q}} , \quad \text{and}   \quad   |r \dr \Omega_{0}|_{C^{\alpha}} \leq c \alpha^{\frac{1}{q}},
   $$  
   and since the vorticity is being transported, we have 
   $$
   \Omega(r,t,\theta)=\Omega_{0}(\Phi^{-1}_t(r,\theta)).
   $$
  Thus, from transport estimates and Lemma \ref{UpperBndL12}, the result follows: 
\begin{equation}  
| \dth \Omega|_{C^{\alpha}}  \leq  c   \alpha^{\frac{1}{q}} e^{\frac{ct}{\alpha^{1-\frac{1}q}}}, \,\,\, \text{and}  \,\,\, | r \dr \Omega|_{C^{\alpha}}  \leq     c   \alpha^{\frac{1}{q}} e^{\frac{ct}{\alpha^{1-\frac{1}{q}}}}.
\end{equation} 

   \end{proof}
 The following corollary follows from the definition of $\mathring{C}^{1,\alpha}$ norm. Recall that: 
   
   $$
   |\Omega|_{\mathring{C}^{1,\alpha}}=    |\Omega|_{C^{\alpha}}+   |\dth  \Omega|_{C^{\alpha}}+|r \dr \Omega|_{C^{\alpha}}. 
   $$    
   \begin{corollary}\label{corOmegaCirc}
For any $1<q<\infty$,   let $\Omega(t)$ be a solution to the leading order model:

\begin{equation*} 
 \dt \Omega+  2 L^s_{12}(\Omega) \sin(2\theta)  \dth \Omega- 2 rL^s_{12}(\Omega) \cos(2\theta) \dr \Omega =0,
\end{equation*} 

 with initial data 
\begin{equation*} 
\Omega_0(r,\theta) =r^{\alpha} \sin(\alpha^{\frac{1}{q}}\log(r))+\alpha^{\frac{1}{q}} r ^{\alpha} \sin(2\theta),
\end{equation*} 

compactly supported on the ball of radius 1. Then   there exist constants $c$ independent of $\alpha$ such that we have the following estimate:

 \begin{equation} \label{OmegaCirc}
 |\Omega|_{\mathring{C}^{1,\alpha}}   \leq      c    e^{\frac{ct}{\alpha^{1-\frac{1}{q}}}}. 
\end{equation}

\end{corollary}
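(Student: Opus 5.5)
The corollary follows by summing three bounds, using the definition
$$|\Omega|_{\mathring{C}^{1,\alpha}}=|\Omega|_{C^{\alpha}}+|\dth \Omega|_{C^{\alpha}}+|r\dr\Omega|_{C^{\alpha}}.$$
The two derivative pieces are handled directly by Lemma \ref{LOMHodlerBnd}. That lemma gives $|\dth\Omega|_{C^\alpha}+|r\dr\Omega|_{C^\alpha}\le c\,\alpha^{\frac1q}e^{\frac{ct}{\alpha^{1-\frac1q}}}$. This is bounded by $c\,e^{\frac{ct}{\alpha^{1-\frac1q}}}$ because $\alpha^{\frac1q}\le 1$. What remains is to bound $|\Omega(t)|_{C^\alpha}$ by the same quantity.

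For the $C^\alpha$ piece, I would use the transport structure, exactly as in the proof of Lemma \ref{LOMHodlerBnd}. Since $\Omega(t)=\Omega_0\circ\Phi^{-1}_t$, we have $|\Omega(t)|_{L^\infty}=|\Omega_0|_{L^\infty}\le 2$. For the Hölder seminorm, we have $[\Omega(t)]_{C^\alpha}\le[\Omega_0]_{C^\alpha}\,|\nabla\Phi^{-1}_t|_{L^\infty}^{\alpha}$.

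First, $[\Omega_0]_{C^\alpha}\le c$ uniformly in $\alpha$. The reason is that $r^\alpha$ is $C^\alpha$ with unit seminorm. The factors $\sin(\alpha^{\frac1q}\log r)$ and $\sin(2\theta)$ only add multiplicative terms of the form $r^\alpha\cdot(\text{bounded angular or log-derivative})$. These are controlled because $|r\dr(\cdot)|$ and $|\dth(\cdot)|$ of those factors are $O(1)$, and $r^\alpha$ absorbs the singular factor $r^{-1}$ in a Hölder difference quotient.

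Second, the velocity of the leading order model is $u=2L^s_{12}(\Omega)(r)\,(\text{hyperbolic field } (x_1,-x_2)\text{ rotated})$. So $|\nabla u|_{L^\infty}\le c|L^s_{12}|_{L^\infty}+c\,|r\dr L^s_{12}|_{L^\infty}$. By Lemma \ref{UpperBndL12}, $|L^s_{12}|_{L^\infty}\le c\alpha^{-1+\frac1q}$. Also $r\dr L^s_{12}(\Omega)(r)=-\frac{1}{4\pi}\int_0^{2\pi}\Omega\sin(2\theta)\,d\theta$, which is bounded by $|\Omega|_{L^\infty}\le c$. Grönwall then gives $|\nabla\Phi^{-1}_t|_{L^\infty}\le e^{\frac{ct}{\alpha^{1-\frac1q}}}$. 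Hence $|\Omega(t)|_{C^\alpha}\le c\,e^{\frac{ct}{\alpha^{1-\frac1q}}}$, with the exponent $\alpha\le1$ absorbed by enlarging $c$.

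Adding the three estimates and adjusting $c$ gives \eqref{OmegaCirc}, valid on the same time scale as Lemma \ref{UpperBndL12}. The main point needing care is the uniform-in-$\alpha$ Hölder bound on $\Omega_0$ near the origin. If the direct argument is awkward, I would first check it separately on dyadic annuli, where $r^\alpha$ is smooth and comparable across the annulus, and then patch the annuli together using the $L^\infty$ bound.
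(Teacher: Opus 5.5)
Your proposal is correct and follows the paper's route. The paper only says the corollary follows from the definition of the $\mathring{C}^{1,\alpha}$ norm together with Lemma \ref{LOMHodlerBnd}; your transport/Gr\"onwall bound on $|\Omega(t)|_{C^{\alpha}}$, using Lemma \ref{UpperBndL12} and $|r\partial_r L^s_{12}(\Omega)|\le \tfrac12|\Omega|_{L^\infty}$, spells out the zeroth-order piece the paper leaves implicit.
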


\section{Remainder Estimate } \label{RemainderrEstS}
In this section, we estimate the error between the leading order model and the  Euler equation. Recall   that the Euler equation (in polar coordinates) satisfies:
$$\dt \omega+\frac{1}{r} \dr \psi \dth \omega-\frac{1}{r}\dth \psi \dr\omega=0$$ 

We will write the solution to the Euler equation as $\omega=\omega_r+ \Omega$. where $\Omega$ solves the leading order model, and $\omega_r$ solves the rest.  Now for the stream function, we have  $\psi(\omega)=\psi(\omega_r)+ \psi(\Omega)$, and   recall from Proposition \ref{ElgindiCcirc} and Remark \ref{PsiReminderEst}, we can write: 

\begin{equation} \psi_r(\Omega)=\psi(\Omega)-r^2L^{s}_{12}(\Omega)(r) \sin(2\theta)-r^2L^{c}_{12}(\Omega)(r) \cos(2\theta),
 \end{equation} 
and since our leading order model is: 

\begin{equation} 
 \dt \Omega+  2 L^s_{12}(\Omega) \sin(2\theta)  \dth \Omega- 2 rL^s_{12}(\Omega) \cos(2\theta) \dr \Omega =0.
\end{equation} 
Thus, we can write the equation for the reminder $\omega_r=\omega- \Omega $ with $\omega_r|_{t=0}=0$ as follows:

\begin{equation}\label{RemainderEq}
\begin{split}
  \dt \omega_r& + \frac{1}{r}  \Big(2rL^s_{12}(\Omega)(r)\sin(2\theta)+ 2rL^c_{12}(\Omega)(r)\cos(2\theta)   + \dr \psi_r(\Omega)+\dr \psi(\omega_r)\Big) \dth \omega_r
   \\ &-\frac{1}{r}   \Big( 2r^2L^s_{12}(\Omega)(r)\cos(2\theta)- 2r^2L^c_{12}(\Omega)(r)\sin(2\theta)+\dth \psi_r(\Omega) + \dth \psi(\omega_r) \Big) \dr \omega_r  \\
  &+\frac{1}{r}  \Big(   2rL^c_{12}(\Omega)(r)\cos(2\theta)+   \dr \psi_r(\Omega)+\dr \psi(\omega_r)  \Big)   \dth \Omega \\ &-\frac{1}{r}   \Big( - 2r^2L^c_{12}(\Omega)(r)\sin(2\theta)+ \dth \psi_r(\Omega) + \dth \psi(\omega_r)  \Big)   \dr \Omega =0.
\end{split}
\end{equation}

\begin{proposition}\label{RemainderEqP}
Let $\omega_r$ be a solution be a solution to \eqref{RemainderEq} with $\omega_r|_{t=0}=0$, then we have the following:
\begin{equation}\label{RemainderEqest}
|\omega_r|_{C^{\alpha}} \leq c \alpha^{\frac{1}{q}}, 
\end{equation}
on the time interval $0 \leq t\leq  T(\alpha)=c \alpha^{1-\frac{1}{q}}\log(c |\log(\alpha)|) $.

\end{proposition}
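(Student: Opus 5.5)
We plan to prove Proposition \ref{RemainderEqP} by a bootstrap argument combined with $C^{\alpha}$ transport estimates for \eqref{RemainderEq}, regarded as a linear transport equation for $\omega_r$ with forcing. We assume on $[0,T^*]\subset[0,T(\alpha)]$ that $|\omega_r|_{C^{\alpha}\cap\mathring{C}^{1,\alpha}}\le 10c\,\alpha^{\frac1q}$, and we aim to recover the same bound with constant $c$. Write the transporting velocity as $v=v_{\mathrm{main}}+v_{\mathrm{err}}$. Here $v_{\mathrm{main}}$ is the hyperbolic field coming from $L^s_{12}(\Omega)$, which is exactly the field of the leading order model. The remainder $v_{\mathrm{err}}$ collects the contributions of $L^c_{12}(\Omega)$, $\psi_r(\Omega)$ and $\psi(\omega_r)$.

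\textbf{Step 1: the transporting field.} By Lemma \ref{UpperBndL12}, $|L^s_{12}(\Omega)|_{L^\infty}\le c\alpha^{-1+\frac1q}$. By Lemma \ref{LOMLcUpbnd}, $|L^c_{12}(\Omega)|_{C^\alpha}\le ct\alpha^{-1+\frac1q}$. By Proposition \ref{ElgindiCcirc} together with Corollary \ref{corOmegaCirc}, $\nabla u(\psi_r(\Omega))$ is bounded in $C^\alpha$ by $c e^{ct\alpha^{-1+1/q}}$. By Proposition \ref{ElgindiCcirc} applied to $\omega_r$, plus the explicit bound $|L_{12}(\omega_r)|\lesssim \alpha^{-1}|\omega_r|_{C^\alpha}\lesssim \alpha^{\frac1q-1}$ under the bootstrap, $\nabla u(\omega_r)$ is of size at most $c\alpha^{-1+\frac1q}$. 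Hence $|\nabla v|_{L^\infty}\le c\alpha^{-1+\frac1q}$. The Lagrangian flow then satisfies $|\nabla\Phi^{\pm1}|\le \exp(ct\alpha^{-1+\frac1q})$, and on $[0,T(\alpha)]$ this is at most $(c|\log\alpha|)^{c}$, a power of $|\log\alpha|$. This is the key quantitative point. The time scale $T(\alpha)=c\alpha^{1-\frac1q}\log(c|\log\alpha|)$ is chosen exactly so that all exponential growth factors are only polylogarithmic in $\alpha$.

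\textbf{Step 2: estimating the forcing.} The forcing consists of the last two lines of \eqref{RemainderEq}, namely $(v_{\mathrm{err}}\cdot\nabla)\Omega$. We estimate it in $C^\alpha$ using the structure $\frac1r\partial_r\psi\,\partial_\theta\Omega$ and $\frac1r\partial_\theta\psi\,\partial_r\Omega=\frac{1}{r^2}\partial_\theta\psi\,(r\partial_r\Omega)$. The coefficients $\frac1r\partial_r\psi$ and $\frac{1}{r^2}\partial_\theta\psi$ are bounded by the velocity-gradient bounds of Step 1. By Lemma \ref{LOMHodlerBnd}, $\partial_\theta\Omega$ and $r\partial_r\Omega$ have $C^\alpha$ norms at most $c\alpha^{\frac1q}e^{ct\alpha^{-1+1/q}}$. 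The $L^c_{12}$ contribution is then bounded by $ct\alpha^{-1+\frac1q}\cdot\alpha^{\frac1q}(\log)^c$. The $\psi_r(\Omega)$ contribution is bounded by $\alpha^{\frac1q}(\log)^c$. The $\psi(\omega_r)$ contribution is bounded by $\alpha^{-1+\frac1q}|\omega_r|\cdot\alpha^{\frac1q}(\log)^c$, which is linear in $\omega_r$ and therefore handled by Gronwall.

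\textbf{Step 3: closing.} The $C^\alpha$ transport estimate gives
$$|\omega_r(t)|_{C^\alpha}\le e^{C\int_0^t|\nabla v|}\int_0^t|F|_{C^\alpha}\,d\tau .$$
Integrating the forcing over $t\le T(\alpha)$ gives $\int_0^t|F|_{C^\alpha}\le \alpha^{\frac1q}\,T\alpha^{-1+\frac1q}(\log|\log\alpha|)^{c}$. The terms linear in $\omega_r$ are handled by Gronwall with rate $\lesssim \alpha^{-1+\frac1q}\alpha^{\frac1q}$. We then repeat the argument for $\partial_\theta\omega_r$ and $r\partial_r\omega_r$ to propagate the $\mathring{C}^{1,\alpha}$ part of the bootstrap, which is needed to apply Proposition \ref{ElgindiCcirc} to $\omega_r$.

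\textbf{Main obstacle.} The expected difficulty is that the polylogarithmic losses $(\log|\log\alpha|)^{c}$ and $(c|\log\alpha|)^{c}$ must be beaten by a genuine power gain. As written, the crude bound yields roughly $\alpha^{\frac1q}(c|\log\alpha|)^{c}$ rather than $c\alpha^{\frac1q}$. To recover a clean bound we would first try shrinking the constant in $T(\alpha)$, so that $e^{ct\alpha^{-1+1/q}}\le(c|\log\alpha|)^{\epsilon}$ with $\epsilon$ small. We would then absorb the remaining logarithmic factor by spending a tiny amount of the $\alpha^{\frac1q}$ smallness. If that is insufficient, we would exploit the sign and angular structure: the $L^c_{12}$ term only pairs with the $\cos(2\theta)$ mode of $\partial_\theta\Omega$, whose size is $\alpha^{\frac1q}$ from the $\omega_f$ part. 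This gives an extra factor $t\alpha^{-1+\frac1q}\lesssim\log|\log\alpha|$ at most, which again is only logarithmic.
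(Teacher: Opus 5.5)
Your framework matches the paper's: transport of $\omega_r$ along the full flow, $C^\alpha$ bounds on the forcing from Lemma~\ref{LOMLcUpbnd}, Proposition~\ref{ElgindiCcirc}, Corollary~\ref{corOmegaCirc} and Lemma~\ref{LOMHodlerBnd}, Schauder for the $\psi(\omega_r)$ terms, and Gronwall on $[0,T(\alpha)]$. As written, however, it does not close, and the ``main obstacle'' you describe comes from a bookkeeping error in Step~3.

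By your own Step~2, the part of the forcing not involving $\omega_r$ satisfies $|F(\tau)|_{C^\alpha}\lesssim \alpha^{\frac1q}(1+\tau\alpha^{-1+\frac1q})e^{C\tau\alpha^{-1+\frac1q}}$. Substitute $s=\tau\alpha^{-1+\frac1q}$, so $d\tau=\alpha^{1-\frac1q}ds$ and $s\le S:=c_0\log(c|\log\alpha|)$ on $[0,T(\alpha)]$. This gives
\[
\int_0^{T(\alpha)}|F|_{C^\alpha}\,d\tau\lesssim \alpha^{\frac1q}\cdot\alpha^{1-\frac1q}\int_0^S(1+s)e^{Cs}\,ds\lesssim \alpha\,(c|\log\alpha|)^{Cc_0}\log(c|\log\alpha|).
\]
It is not $\alpha^{\frac1q}\,T\alpha^{-1+\frac1q}(\log|\log\alpha|)^{c}$. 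You multiplied by a spurious $\alpha^{-1+\frac1q}$ and so discarded the factor $\alpha^{1-\frac1q}$ that comes from the shortness of the time interval. Since $q>1$, that factor is a genuine power gain over the target $\alpha^{\frac1q}$, and it beats every logarithmic loss.

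The linear terms also need a correction. The Schauder loss is $\alpha^{-1}$, as your own Step~1 says, so the Gronwall rate is $\alpha^{-1}\cdot\alpha^{\frac1q}e^{C\tau\alpha^{-1+\frac1q}}$, not $\alpha^{-1+\frac1q}\alpha^{\frac1q}$. Its time integral is $C\int_0^S e^{Cs}\,ds\lesssim (c|\log\alpha|)^{Cc_0}$, so the Gronwall factor is $\exp\big(C(c|\log\alpha|)^{Cc_0}\big)$. This is where shrinking the constant $c_0$ in $T(\alpha)$ is actually needed: with $Cc_0<1$ the factor is $\alpha^{-o(1)}$. The transport factor $\exp(C\int_0^T|\nabla v|)$ is only polylogarithmic, as you say. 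Altogether $|\omega_r|_{C^\alpha}\lesssim\alpha^{1-o(1)}\le\alpha^{\frac1q+\eta}$. This is exactly how the paper closes: it obtains $c\alpha\, e^{t/\alpha^{1-1/q}}\exp(c\,e^{t/\alpha^{1-1/q}})$ and then takes the constant in $T(\alpha)$ small.

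The remedies you propose cannot work. No choice of constants turns $\alpha^{\frac1q}|\log\alpha|^{\epsilon}$ into $c\alpha^{\frac1q}$, and spending smallness gives only $\alpha^{\frac1q-\epsilon}$. The exact power matters downstream, because the main theorem converts $|\omega_r|_{C^\alpha}$ into $|\omega_r|_{B^0_{\infty,q}}$ at a cost of $\alpha^{-\frac1q}$. Either weaker bound would let $|u(\omega_r)|_{B^1_{\infty,q}}$ grow faster than the $\log(c|\log\alpha|)$ inflation you are trying to detect.

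Finally, drop the $\mathring{C}^{1,\alpha}$ part of your bootstrap. The bound $|\nabla u(\omega_r)|_{C^\alpha}\lesssim\alpha^{-1}|\omega_r|_{C^\alpha}$ suffices, and its loss is absorbed as above. Propagating $\partial_\theta\omega_r$ and $r\partial_r\omega_r$ would require $C^\alpha$ control of second derivatives of $\Omega$ and of the velocity coefficients, which none of the available lemmas provide.
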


\begin{proof}Since $\omega_r$ satisfies \eqref{RemainderEq},   composing with the flow map generates by the velocity field and estimating the  H\"older norm, we obtain: 
\begin{align*}
  \frac{d}{dt}|\omega_r|_{C^{\alpha}} & \leq c  \Big(  |L^c_{12}(\Omega) |_{C^{\alpha}}  +   | \frac{1}{r} \dr \psi_r(\Omega)|_{C^{\alpha}}  +  | \frac{1}{r} \dr \psi(\omega_r)|_{C^{\alpha}} \Big)   |\dth \Omega|_{C^{\alpha}} \\
&+ c \Big( |L^c_{12}(\Omega)|_{C^{\alpha}} +   | \frac{1}{r^2}   \dth \psi_r(\Omega)  |_{C^{\alpha}} +  |\frac{1}{r^2}    \dth \psi(\omega_r) |_{C^{\alpha}} \Big)  |r \dr \Omega|_{C^{\alpha}} .
\end{align*}
Now we start estimating each term. Since the vorticity is compactly supported on the ball of radius 1,  the support of the vorticity on our time scale will be controlled. For instance, on the ball of radius 2. Thus, when we do the Schauder estimates, to simplify the notation,  we will drop $L^{1}$ term.

\textbf{Estimate on  $ |L^c_{12}(\Omega)|_{C^{\alpha}}$  }

This estimates follows from Lemma \ref{LOMLcUpbnd}. Hence, we have    
\begin{align}\label{L12cRemEst}
& |L^c_{12}(\Omega)|_{C^{\alpha}}  \leq    c e^{\frac{t}{\alpha^{1-\frac{1}{q}}}}.   
\end{align}

\textbf{Estimate on $|\frac{1}{r} \dr \psi_r(\Omega)|_{C^{\alpha}}$ and $| \frac{1}{r^2}  \dth \psi_r(\Omega) |_{C^{\alpha}}$} 

This estimates follows from Proposition \ref{ElgindiCcirc} and Corollary \ref{corOmegaCirc}.  Thus, we have    
\begin{align}
&|\frac{1}{r}\dr \psi_r(\Omega)|_{C^{\alpha}}\leq c    |\Omega|_{\mathring{C}^{1,\alpha}}  \leq   c e^{\frac{ct}{\alpha^{1-\frac{1}{q}}}}, \label{drPsirRemEst}    \\
& |\frac{1}{r^2} \dth \psi_r(\Omega) |_{C^{\alpha}} \leq    c |\Omega|_{\mathring{C}^{1,\alpha}}  \leq        ce^{\frac{ct}{\alpha^{1-\frac{1}{q}}}}.  \label{dthPsirRemEst}
\end{align}

\textbf{Estimate on $|\frac{1}{r} \dr \psi(\omega_r)|_{C^{\alpha}}$ and $| \frac{1}{r^2}  \dth \psi(\omega_r) |_{C^{\alpha}}$ } 

This estimates follows from standard elliptic Schauder estimates:  
    \begin{align}
&|\frac{1}{r}\dr \psi(\omega_r)|_{C^{\alpha}}\leq \frac{c}{\alpha}|\omega_r|_{C^{\alpha}},   \label{dthPsiRemEst} \\
& |\frac{1}{r^2} \dth \psi(\omega_r) |_{C^{\alpha}} \leq \frac{c}{\alpha}|\omega_r|_{C^{\alpha}}  . \label{dthPsRemiEst}
\end{align}

\textbf{Estimate on $|\dth \Omega|_{C^{\alpha}}$ and $|r \dr \Omega|_{C^{\alpha}}$} 

This estimates follows from Lemma \ref{LOMHodlerBnd}. Therefore, we have 
\begin{align}
&|\dth \Omega|_{C^{\alpha}} \leq  c \alpha^\frac{1}{q}   e^{\frac{ct}{\alpha^{1-\frac{1}{q}}}} ,\label{dthOmegaRemEst} \\
&|r \dr \Omega|_{C^{\alpha}} \leq   c \alpha^\frac{1}{q}   e^{\frac{ct}{\alpha^{1-\frac{1}{q}}}} .\label{drOmegaRemEst} 
 \end{align}

\textbf{Total Estimate}

Now from \eqref{L12cRemEst}, \eqref{drPsirRemEst}, \eqref{dthPsirRemEst}, \eqref{dthPsiRemEst}, \eqref{dthPsRemiEst}, \eqref{dthOmegaRemEst}, and \eqref{drOmegaRemEst}, we have 
 \begin{align*}
  \frac{d}{dt}|\omega_r|_{C^{\alpha}} & \leq  \Big(      c e^{\frac{ct}{\alpha^{1-\frac{1}{q}}}}   +  \frac{c}{\alpha}|\omega_r|_{C^{\alpha}}  \Big)   c \alpha^\frac{1}{q}   e^{\frac{ct}{\alpha^{1-\frac{1}{q}}}}.  
  \end{align*}

Now since $\omega_r|_{t=0}=0$, by Gronwall inequality, we have

  \begin{align*}
   |\omega_r|_{C^{\alpha}} & \leq \Big( \int_0^t  c \alpha^\frac{1}{q}   e^{ \frac{\tau}{\alpha^{1-\frac{1}{q}}}}     \Big) \exp{\Big( \int_0^t   \frac{c}{\alpha^{1-\frac{1}{q}}}  e^{\frac{\tau}{\alpha^{1-\frac{1}{q}}}}   }\Big)  \leq   \  c \alpha   e^{\frac{t}{\alpha^{1-\frac{1}{q}}}}     \exp{\Big(    c   e^{\frac{t}{\alpha^{1-\frac{1}{q}}}}   }\Big) . 
  \end{align*}

 Choosing our time $T(\alpha)=c \alpha^{1-\frac{1}{q}}\log(c |\log(\alpha)|) $ for $c$ small enough and independent of $\alpha$, we obtain   the following:   
\begin{align*}
   |\omega_r|_{C^{\alpha}} &  \leq c   \alpha^{ \frac{1}{q} +\eta} \leq  c  \alpha^{ \frac{1}{q}},
  \end{align*}
for $0\leq t \leq  T(\alpha)$, and for some $\eta>0$, and this completes our remainder estimates. 
\end{proof}

  \section{Main Result  }\label{Main}
In this section, we prove our  result:

  \begin{theorem*} (Strong ill-posedness in ${B^{1}_{\infty,q}}$): For any $\delta>0$,  $1<q<\infty$, and $0<\alpha<1$, there exist initial velocity data  $u_0^{\alpha,q,\delta} \in C^{1,\alpha}(\mathbb{R}^2)$ such that the solution to the 2d Euler equation  satisfies the following: 
  
  $$|u_0|_{B^{1}_{\infty,q}}=\delta,  \hspace{0.3 cm } \text{but}  \hspace{0.3 cm }  \sup_{0\leq t \leq T(\alpha)}|u(t)|_{B^{1}_{\infty,q}} \geq   c\log(c |\log(\alpha)|), $$ 
  
  where 
  $T(\alpha)=c{\alpha^{1-\frac{1}{q}} \log(c |\log(\alpha)|) }$ and c is independent of $\alpha$.  
  \end{theorem*}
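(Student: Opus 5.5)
The plan is to combine the leading order model analysis of Section \ref{LOME} with the remainder estimate of Proposition \ref{RemainderEqP}.

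\textbf{Initial data.} Take $\omega_0=\Omega_0=r^{\alpha}\sin(\alpha^{\frac{1}{q}}\log(r))+\alpha^{\frac{1}{q}}r^{\alpha}\sin(2\theta)$, smoothly cut off to the unit ball, and set $u_0$ to be its Biot--Savart velocity. By the computations of the previous lemma (the $\omega_A$ and $\omega_C$ cases, relying on Lemma \ref{Type1} and Lemma \ref{Type2}), $|u_0|_{B^{1}_{\infty,q}}\le c$ uniformly in $\alpha$. This norm is also bounded below by a positive constant independent of $\alpha$, since the lower bounds in those lemmas are of order $\alpha^{0}$ when $k=q$. So after multiplying by a constant $\lambda\sim\delta$ we get $|u_0|_{B^{1}_{\infty,q}}=\delta$ exactly.

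The scaling $\omega\mapsto\lambda\omega$ only rescales time by $\lambda^{-1}$. Since $\delta$ is fixed, this changes $T(\alpha)$ only by a constant factor, which is absorbed into $c$. The data lies in $C^{\alpha}$ with compact support and has $\mathring{C}^{1,\alpha}$ regularity. By the classical theory (Wolibner/H\"older), there is a unique global solution with $u_0\in C^{1,\alpha}$.

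\textbf{Splitting the solution.} Write $\omega=\Omega+\omega_r$, where $\Omega$ solves \eqref{LOMeqExp}. Then
$$u(t)=\nabla^{\perp}\big(r^2L^s_{12}(\Omega)\sin(2\theta)+r^2L^c_{12}(\Omega)\cos(2\theta)\big)+\nabla^{\perp}\psi_r(\Omega)+u(\omega_r).$$
Proposition \ref{L12cBesovLowBnd} gives $|rL^c_{12}(\Omega)|_{B^{1}_{\infty,q}}\ge c\log(c|\log\alpha|)$ at $t=T(\alpha)$. The angular factor $\cos(2\theta)$, $\sin(2\theta)$ can be frozen on a sector. For example, restrict the second differences to $x$ and $h$ along the ray $\theta=0$, where the relevant velocity component reduces to $2rL^c_{12}(\Omega)$. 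The $B^{1}_{\infty,q}$ lower bound computed in that proposition only used radial differences at the origin, so it transfers to a lower bound on the corresponding component of $u$ up to a constant.

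\textbf{Bounding the other pieces from above.}

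1. The $L^s_{12}$ part. On $\theta=0$ this part contributes to the other velocity component (or to a component orthogonal to the one we test). If instead one tests a mixed component, I would argue that the $L^s_{12}$ piece is of type $\omega_A$. Its Besov size is then controlled by $\alpha^{1/q}|\Omega|_{C^{\alpha}}$-type bounds and stays $O(1)$, because $\Omega$ is a transport of data whose $C^\alpha$ seminorm is $O(\alpha^{1/q})$ in the $\sin 2\theta$ part. The radial part is annihilated by $L^s_{12}$, as in Lemma \ref{UpperBndL12}.

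2. The $\psi_r(\Omega)$ part. Proposition \ref{ElgindiCcirc} bounds $\nabla u$ for this part in $C^{\alpha}$ by $c|\Omega|_{\mathring{C}^{1,\alpha}}\le ce^{ct/\alpha^{1-1/q}}$ (Corollary \ref{corOmegaCirc}). On the time scale $T(\alpha)$ this is at most $|\log\alpha|^{c'}$ with $c'$ small. A $C^{1,\alpha}$ function $v$ has $|v|_{B^{1}_{\infty,q}}\lesssim |\nabla v|_{L^\infty}+(q\alpha)^{-1/q}|\nabla v|_{C^{\alpha}}$. This estimate is delicate, and is where I expect the main obstacle. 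The likely fix is to use the $L^\infty$ bound on $\nabla u$ together with interpolation, separating $h<\alpha$-scales from larger scales. Alternatively, one can use that the lower bound is concentrated on one scale band, and test the Besov integral only over $h\in[h_0,1]$ with $h_0$ fixed. There the $\psi_r(\Omega)$ and $\omega_r$ pieces contribute at most $|\nabla v|_{L^\infty}$ times a constant.

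3. The $\omega_r$ part. Proposition \ref{RemainderEqP} gives $|\omega_r|_{C^{\alpha}}\le c\alpha^{1/q}$. Schauder then gives $|\nabla u(\omega_r)|_{C^{\alpha}}\le c\alpha^{1/q-1}$, and the $B^{0}_{\infty,q}$ embedding $|\omega_r|_{B^0_{\infty,q}}\lesssim \alpha^{-1/q}|\omega_r|_{C^\alpha}\le c$ makes $|u(\omega_r)|_{B^{1}_{\infty,q}}\le c$, bounded by the boundedness of Riesz transforms on Besov spaces, as in the $\omega_A$ argument.

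\textbf{Conclusion and main obstacle.} The triangle inequality then gives
$$\sup_{t\le T(\alpha)}|u(t)|_{B^{1}_{\infty,q}}\ge c\log(c|\log\alpha|)-C,$$
which yields the theorem after adjusting $c$ for small $\alpha$; large $\alpha$ is trivial. The main obstacle is step 2: making sure the $\psi_r(\Omega)$ contribution, whose $C^{1,\alpha}$ bound grows like a power of $|\log\alpha|$, does not swamp the double-logarithmic growth. I would handle this by choosing the constant in $T(\alpha)$ small, so that $e^{ct/\alpha^{1-1/q}}\le(\log|\log\alpha|)^{1/2}$. This amounts to replacing $T$ by $c\alpha^{1-1/q}\log\log|\log\alpha|$ if necessary. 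One then checks that the lower bound from Proposition \ref{L12cBesovLowBnd} still dominates after this choice.
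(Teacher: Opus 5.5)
Your skeleton is the paper's: the same data, the same splitting $\omega=\Omega+\omega_r$, the lower bound from Proposition \ref{L12cBesovLowBnd} via second differences based at the origin, the $L^s_{12}$-pieces handled as in $K_2,K_3$, and $u(\omega_r)$ handled via Proposition \ref{RemainderEqP}, $C^\alpha\subset B^0_{\infty,q}$ and Riesz boundedness. The genuine gap is your step 2, the contribution of $\nabla^\perp\psi_r(\Omega)$. You are right to flag it, because the paper's own proof simply omits it: there $u_1(\Omega)$ is computed from $r^2L^s_{12}\sin 2\theta+r^2L^c_{12}\cos 2\theta$ alone. None of your three fixes closes this gap.

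\textbf{Interpolation.} Bounds of the type $|\Delta_h^2 v|_{L^\infty}\lesssim\min(|h|\,|\nabla v|_{L^\infty},\,|h|^{1+\alpha}|\nabla v|_{C^\alpha})$, however you interpolate them, give at best $|v|_{B^1_{\infty,q}}\lesssim \alpha^{-1/q}(|\nabla v|_{L^\infty}+|\nabla v|_{C^\alpha})$. This is because each of the $\sim 1/\alpha$ dyadic shells $e^{-1/\alpha}\le |h|\le 1$ contributes a fixed amount. These norms are genuinely of order one here. At $t=0$ the radial part of $\Omega_0$ sits entirely in $\psi_r$, and $u(\omega_C)$ is $O(1)$ in $B^1_{\infty,q}$ only because its second differences are $O(\alpha^{1/q}|h|^{1+\alpha})$. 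No H\"older norm of $\nabla u$ detects that cancellation.

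\textbf{Restricting to $h\in[h_0,1]$.} This throws away the main term. By \eqref{L12cFD} the main term comes from $|\log h|\sim 1/\alpha$, whereas for $h\ge h_0$ one has $|L^s_{12}(\tau,h)|\lesssim \alpha^{1/q}|\log h_0|$, so the restricted lower bound tends to $0$. Restricting instead to the true band $|\log h|\sim 1/\alpha$ still leaves $\sim 1/\alpha$ dyadic scales.

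\textbf{Shrinking $T$.} This does not help either. The $\alpha^{-1/q}$ loss is independent of $t$. Writing $s=t/\alpha^{1-1/q}$, the gain $\log(1+cs)$ tends to infinity only if $s\to\infty$, and there $e^{cs}$ wins. With your choice $e^{cs}=(\log|\log\alpha|)^{1/2}$, the gain is only of order $\log\log\log|\log\alpha|$.

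What the argument actually needs is a pointwise bound at the base point $0$:
$|\nabla^\perp\psi_r(\Omega)(2h)-2\nabla^\perp\psi_r(\Omega)(h)|\lesssim \alpha^{1/q}|h|^{1+\alpha}$ on $[0,T(\alpha)]$. That is, the cancellation must persist in time. This does not follow from Proposition \ref{ElgindiCcirc} and Corollary \ref{corOmegaCirc}.

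Independently of step 2, the input you import from Proposition \ref{L12cBesovLowBnd} is not reliable. Its proof fixes the sign of $\cos 2\theta(\tau)$ by the initial angle in \eqref{cos2thetaneg}, but trajectories cross $\theta=\pi/4$. To leading order the model's inverse flow is $(x,y)\mapsto(e^{\Lambda}x,e^{-\Lambda}y)$ with $\Lambda=\int_0^t 2L^s_{12}\,d\tau$. Hence $X=\frac12\log(\cosh 2\Lambda+\sinh 2\Lambda\cos 2\theta)$, whose $\cos 2\theta$-coefficient is exactly $\tanh\Lambda$. Equivalently, $\int_0^{\pi/2}\cos 2\theta(\tau)\cos 2\theta\,d\theta=\frac{\pi}{4}\mathrm{sech}^2\Lambda(\tau)$, which is not bounded below. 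So the factor $\int_0^t L^s_{12}\,d\tau$ in \eqref{L12cFD} is really a multiple of $\tanh\Lambda\le 1$. This route therefore yields at most an $O(1)$ lower bound for $|rL^c_{12}(\Omega)|_{B^1_{\infty,q}}$, not $c\log(c|\log\alpha|)$. Even with step 2 repaired, the theorem would not follow from this ingredient as it stands.
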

  \begin{remark}
Taking $\alpha \rightarrow 0$ implies that   $T(\alpha)=c{\alpha^{1-\frac{1}{q}} \log(c |\log(\alpha)|) }  \rightarrow 0$, and $  |u|_{B^{1}_{\infty,q}} \rightarrow \infty$.  Thus, we have  strong ill-posedness.  
   \end{remark}
  \begin{proof}
  
  We consider the following initial data
   \begin{equation}\label{MTID}
  \omega_0(r,\theta)=\Omega_0(r,\theta) =r^{\alpha} \sin(\alpha^{\frac{1}{q}}\log(r))+\alpha^{\frac{1}{q}} r ^{\alpha} \sin(2\theta)
  \end{equation}
  compactly supported in the ball of radius 1. 
  Recall that we can write the solution to 2d the Euler equation as:
  $$
  \omega(t)=\Omega(t)+\omega_r(t)
  $$
  where $\Omega$ is a solution to the leading order model \eqref{LOMeqExp} with initial data $\Omega|_{t=0}= \omega_0(r,\theta)$ in \eqref{MTID}, and $\omega_r$ is the remainder  satisfying \eqref{RemainderEq}   with   $\omega_r|_{t=0}=0$ From Proposition \ref{RemainderEqP}, we have 
  
   \begin{equation}\label{RemainderEqEstMT}
  |\omega_r|_{C^{\alpha}} \leq c \alpha^{\frac{1}{q} }
  \end{equation}
 Now we can write $\psi(\omega)=\psi(\Omega)+ \psi(\omega_r)$ and recall that the velocity is defined as follows: $$u=\nabla^{\perp} \cdot \psi  $$ Without the loss of generality, we consider $u_1$ (same can be done for $u_2$), and  write 
  
  $$
  u_{1}(\omega)= u_{1}(\Omega)+   u_{1}(\omega_r)
  $$ From \eqref{RemainderEqEstMT}, we have    $|\omega_r|_{C^{\alpha}} \leq c \alpha^{\frac{1}{q} }$, on the time scale  $T(\alpha)=c{\alpha^{1-\frac{1}{q}} \log(c |\log(\alpha)|) }$.  Hence, we have $|\omega_r|_{B^{\alpha}_{\infty,\infty}} \leq c \alpha^{\frac{1}{q} }$, which implies  $|\omega_r|_{B^{0}_{\infty,q}} \leq c $, with $c$ is independent of $\alpha$, on the same time interval.  Since the Riesz operators are bounded on   Besov spaces, we have  $u_1(\omega_r)$ is bounded in    ${B^{1}_{\infty,q}}.$ Namely,

   \begin{equation}\label{VeloctyReminder}
  |u_1(\omega_r)|_{B^{1}_{\infty,q}} \leq c,  
    \end{equation}
    where $c$ is independent of $\alpha$. Now we will estimate the velocity generated from the leading order model.
     Recall that $\dy=\sin(\theta) \dr + \frac{\cos(\theta)}{r} \dth$, hence the velocity generated from the leading order model will be
   \begin{align*} 
   u_1(\Omega) &=-\sin(\theta)\Big(2rL^{s}_{12}(\Omega)\sin(2\theta)+ 2rL^{c}_{12}(\Omega)\cos(2\theta)+r^2\dr L^{s}_{12}(\Omega)\sin(2\theta)+ r^2\dr L^{c}_{12}(\Omega)\cos(2\theta)\Big)\\
   &-\frac{\cos(\theta)}{r}\Big(2r^2L^{s}_{12}(\Omega)\cos(2\theta)- 2r^2L^{c}_{12}(\Omega)\sin(2\theta)) \Big).
     \end{align*}
   Reorganizing the terms, we have 
         
            \begin{align*} 
   u_1(\Omega) &= 2rL^{c}_{12}(\Omega)  \sin(\theta)  - 2rL^{s}_{12}(\Omega) \cos(\theta)    \\
   &  +  \frac{1}{2} r^2\dr L^{s}_{12}(\Omega) \Big(\cos(\theta))-\cos(3\theta)  \Big)  -     \frac{1}{2} r^2\dr L^{c}_{12}(\Omega)  \Big(  \sin(3\theta)-\sin(\theta) \Big).
     \end{align*} 
    Recall that the second finite difference is $\Delta^2_{h}u_1=u_1(x+2h)-2u_1(x+h)+u_1(x)$, where here  $h=(h_1,h_2)$ is a vector. Thus, we have 
     $$
     |\Delta^2_{h}u_1(\Omega)|_{L^{\infty}} \geq |  u_1(\Omega)(2h)-2u_1(\Omega)(h)|.
     $$
     
To simplify the notation, we will write $h=\sqrt{h_1^2+h_2^2}$ to denote the radial component. In addition, we will  define the following terms:
     \begin{align*}
   u_1(\Omega)(2h)-2u_1(\Omega)(h) & = \underbrace{ 4h  \Big( L^{c}_{12}(\Omega)(2h)- L^{c}_{12}(\Omega)(h)  \Big) \sin(\theta)}_{K_1} + \underbrace{-4h  \Big( L^{s}_{12}(\Omega)(2h)- L^{s}_{12}(\Omega)(\Omega)(h)  \Big) \cos(\theta) }_{K_2}    \\
   & \underbrace{ + \Big(2 h^2\dr L^{s}_{12}(\Omega)(2h) - h^2\dr L^{s}_{12}(\Omega)(h)  \Big)   \Big(\cos(\theta))-\cos(3\theta)  \Big) }_{K_3}\\ 
    &  \underbrace{- \Big(2 h^2\dr L^{c}_{12}(\Omega)(2h) - h^2\dr L^{c}_{12}(\Omega)(h)  \Big)  \Big(  \sin(3\theta)-\sin(\theta) \Big)}_{K_4}. 
     \end{align*} 
    
  When computing the Besov $B^{1}_{\infty,q}$ norm of the above terms, we will encounter integrals of the following types:
 $$
 \int_{B_1(0)}  |f(h)g(\theta)|^q \frac{1}{|h|^{2+q}}  h  dh d\theta, $$
   
 where $g(\theta)$ is going to be either $\sin(\theta), \cos(\theta), \sin(3\theta), \text{or} \cos(3\theta)$. Thus, they can be estimated as follows:
 
  $$
 \int_{B_1(0)} |f(h)g(\theta)|^q \frac{1}{h^{2+q}}  h   dh d\theta \geq   \int_{0}^1\int_{c_1\pi}^{c_1\pi}|f(h)g(\theta)|^q \frac{1}{h^{2+q}} h   d\theta dh  \geq   c   \int_{0}^1 |f(h)|^q \frac{1}{h^{2+q}} h  dh.   $$
 
Similarly, we have

  $$
 \int_{B_1(0)} |f(h)g(\theta)|^q \frac{1}{h^{2+q}}  h  dh d\theta  \leq   c   \int_{0}^1 |f(h)|^q \frac{1}{h^{2+q}} h  dh.   $$

Hence, it suffice to just do the radial integral. Now, we start estimating each term

     \textbf{Estimate on $K_1$}
     
     From Proposition \ref{L12cBesovLowBnd}, we know that   
     
    $$
|r L_{12}^{c}(\Omega)(t)|_{B^{1}_{\infty,q}}
   \geq     c     \log(c\frac{t}{\alpha^{1-\frac{1}{q}}}+1).  $$
   
   Namely, from equation \eqref{L12cFD}, we have

      $$    | \Delta^2_h(r L_{12}^{c}(\Omega)(r))|_{L^{\infty}} \geq 
  c  \alpha^{\frac{1}{q}}         h^{1+\alpha} | \cos(\alpha^{\frac{1}{q}}\log(h))\ |       \int_0^t  L^{s}_{12}(\Omega(\tau,h))     d\tau |.  $$

 Thus, in the same manner, it follows that

   \begin{equation}\label{K1est}
 c \int_0^1 |K_1|^{q}  \frac{1}{h^{2+q}} h   dh   \geq  (c     \log(c\frac{t}{\alpha^{1-\frac{1}{q}}}+1))^q, 
   \end{equation}
   
   on the time scale $T(\alpha)=c{\alpha^{1-\frac{1}{q}} \log(c |\log(\alpha)|) }$.
     
          \textbf{Estimate on $K_2$ and $K_3$ }
          
 From Proposition \ref{LowerBndL12}, using equation \eqref{L12smp0},  we have

                \begin{equation*}
  L_{12}^{s}(\Omega)(t)=c\int_{r}^{1}\int_{0}^{\frac{\pi}{2}}\frac{ 1}{s} \Big( \alpha^{\frac{1}{q}}  s^{\alpha}e^{\alpha X(t)} \sin(2\theta(t)) \Big)    \sin(2\theta) d\theta ds.
 \end{equation*} 
 
 Thus, by taking $\alpha$ small enough, we have 
 
 $$
 |K_2| \leq  c h \int_{h}^{2h} \frac{ 1}{s}   \alpha^{\frac{1}{q}}  s^{\alpha} ds \leq c \alpha^{\frac{1}{q}} h^{1+\alpha}
 $$
 
 on the same time scale    $T(\alpha)$. Hence, we obtain 
   \begin{equation}\label{K2est}
 c \int_0^1 |K_2|^{q}  \frac{1}{h^{1+q}}     dh   \leq c,
   \end{equation} 
 
where $c$ is independent  of $\alpha$.   Similarly, using equation \eqref{L12smp0}, we have

                \begin{equation*} 
  r \dr L_{12}^{s}(\Omega)(r)=-c \int_{0}^{\frac{\pi}{2}}  \Big( \alpha^{\frac{1}{q}}  r^{\alpha}e^{\alpha X(t)} \sin(2\theta(t)) \Big)    \sin(2\theta) d\theta.
 \end{equation*} 
 
Thus, we obtain

   \begin{equation*}  
 |K_3|  \leq c \alpha^{\frac{1}{q}} |h|^{1+\alpha},
   \end{equation*} 
 
 which similarly gives

   \begin{equation}\label{K3est}
  \int_0^1 |K_3|^q  \frac{1}{h^{1+q}}     dh   \leq c, 
   \end{equation} 
   
with $c$ independent  of $\alpha$.

                    \textbf{Estimate on $K_4$}
    
     From Proposition \ref{L12cBesovLowBnd}, using equation \eqref{L12cExp2},  we have     
\begin{align*} 
   L_{12}^{c}(\Omega)=c\int_{r}^{1}\int_{0}^{\frac{\pi}{2}}\frac{s^{\alpha}}{s} \Big(   \cos(\alpha^{\frac{1}{q}}\log(s) )  \alpha^{\frac{1}{q}} X(t)   + \alpha^{\frac{2}{q}}  X(t) E(s,X)  \Big)  \cos(2\theta) d\theta ds
\end{align*}

    Thus, by taking   $\alpha$ small enough, we have 
    $$|K_4|\leq \alpha^{\frac{1}{q}} |X(t)|    |h|^{1+\alpha}  \Big|   \cos(\alpha^{\frac{1}{q}}\log(2|h|) )      -   \cos(\alpha^{\frac{1}{q}}\log(|h|) )    \Big| \leq    \alpha^{\frac{2}{q}}  |X(t)|    |h|^{1+\alpha}  $$
    
    on the time scale $T(\alpha)=c{\alpha^{1-\frac{1}{q}} \log(c |\log(\alpha)|) }$. Hence, we obtain     
       \begin{equation}\label{K4est}
    \int_0^1 |K_4|^{q}  \frac{1}{h^{1+q}}   dh   \leq c 
   \end{equation} 
   
   where $c$ is independent  of $\alpha$.   
     
     \textbf{Final Estimate}

From \eqref{K1est}, \eqref{K2est}, \eqref{K3est}, and \eqref{K4est}, we observe that the main term is \eqref{K1est}. Thus, the remaining terms can be absorbed when computing the Besov norm. Therefore, we have
     
     $$
| u_1(\Omega) |_{B^{1}_{\infty,q}} \geq  c   \log(c \frac{t}{\alpha^{1-\frac{1}{q}}}+1)   
$$
on our time interval $0\leq t \leq T(\alpha)=c{\alpha^{1-\frac{1}{q}} \log(c |\log(\alpha)|) }$. Thus,  from \eqref{VeloctyReminder}, we have that the velocity of the  Euler equation satisfies 

 $$\sup_{0\leq t \leq T(\alpha)}|u_1(\omega)(t)|_{B^{1}_{\infty,q}} \geq  c \log(c |\log(\alpha)|)$$
   where $T(\alpha)=c{\alpha^{1-\frac{1}{q}} \log(c |\log(\alpha)|) }  \rightarrow 0$, and  $|u_1(\omega) |_{B^{1}_{\infty,q}}  \rightarrow \infty$ as $\alpha \rightarrow 0$. This gives strong ill-posedness. Now, since the constants $c$ are independent of $\alpha$, rescaling that data by $\delta>0$ completes the proof.  

\end{proof}
The following corollary follows from the fact that we can approximate the initial $C^{\alpha}$ vorticity by smooth  data, and that $C^{\alpha}\subset B^{0}_{\infty,q}$.

   \begin{corollary*}  For any $\delta>0$,  $1<q<\infty$, and $0<\alpha<1$,  there exist smooth initial velocity   $u_0^{\delta,\alpha,q} \in C^{\infty}(\mathbb{R}^2)$ such that the solution to the 2d Euler equation  satisfies the following: 
  
  $$|u_0|_{B^{1}_{\infty,q}}=\delta,  \hspace{0.3 cm } \text{but}  \hspace{0.3 cm }  \sup_{0\leq t \leq T(\alpha)}|u(t)|_{B^{1}_{\infty,q}} \geq   c \log(c |\log(\alpha)|), $$ 
  
  where 
  $T(\alpha)=c{\alpha^{1-\frac{1}{q}} \log(c |\log(\alpha)|) }$ and c is independent of $\alpha$.  
  \end{corollary*}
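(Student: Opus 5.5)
The corollary will be reduced to the Main Theorem just proved, using a smoothing-and-stability argument. Fix $\delta$, $q$ and $\alpha$, and let $\omega_0$ be the $C^{\alpha}$ vorticity \eqref{MTID}, compactly supported in the unit ball and rescaled by $\delta$. Let $\varphi_\epsilon$ be a standard mollifier and set $\omega_0^\epsilon=\varphi_\epsilon*\omega_0$. Then $\omega_0^\epsilon\in C^\infty_c$, its support lies in $B_2(0)$, and $u_0^\epsilon=\nabla^\perp\Delta^{-1}\omega_0^\epsilon\in C^\infty$. Mollification does not increase $C^{\alpha}$ norms, and $\omega_0^\epsilon\to\omega_0$ in $C^{\alpha'}$ for every $\alpha'<\alpha$. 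The angular symmetries of $\omega_0$ (the radial part and the $\sin(2\theta)$ part) are preserved if $\varphi_\epsilon$ is taken radial.

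\textbf{Step 1: initial norm.} Since $C^{\alpha}\subset B^{\alpha}_{\infty,\infty}$ and the Riesz transforms are bounded on Besov spaces, the map $\omega\mapsto u(\omega)$ is continuous from $C^{\alpha'}$ (with compact support) into $B^{1}_{\infty,q}$; this is the same interpolation bound used for $u(\omega_r)$ in \eqref{VeloctyReminder}, with a constant depending on $\alpha'$. Hence $|u_0^\epsilon|_{B^1_{\infty,q}}\to|u_0|_{B^1_{\infty,q}}$. After multiplying the data by a scalar factor tending to $1$, the norm equals $\delta$ exactly.

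\textbf{Step 2: stability of the solution.} The velocity $u_0\in C^{1,\alpha}$, so the 2d Euler equation is globally well posed there. The smooth solutions $\omega^\epsilon(t)$ stay bounded in $C^{\alpha}$ on $[0,T(\alpha)]$, with a bound depending on $\alpha$ but not on $\epsilon$; this is the classical estimate controlled by $|\nabla u|_{L^\infty}$. The standard difference estimate (Lagrangian, or a $C^{\alpha'}$ estimate for $\omega^\epsilon-\omega$ with $\alpha'<\alpha$) then gives $\omega^\epsilon(t)\to\omega(t)$ in $C^{\alpha'}$, uniformly on $[0,T(\alpha)]$. By the continuity from Step 1 we get $u^\epsilon(t)\to u(t)$ in $B^{1}_{\infty,q}$, uniformly in time.

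\textbf{Step 3: conclusion.} The Main Theorem gives some $t_*\le T(\alpha)$ with $|u(t_*)|_{B^1_{\infty,q}}\ge c\log(c|\log\alpha|)$. By Step 2 we may choose $\epsilon$ small, depending on $\alpha$, so that $|u^\epsilon(t_*)|_{B^1_{\infty,q}}\ge \tfrac{c}{2}\log(c|\log\alpha|)$; halving $c$ yields the stated bound. Alternatively, one can rerun the whole argument of Sections~\ref{LOME}--\ref{Main} directly with $\omega_0^\epsilon$. The leading order model lower bounds depend only on the symmetry, on the $\alpha^{1/q}$ amplitudes and on the $C^\alpha$ and $\mathring C^{1,\alpha}$ sizes, and all of these are preserved uniformly in $\epsilon$ by a radial mollifier.

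\textbf{Main obstacle.} The delicate point is Step 2: making sure that approximation in a weaker Hölder norm still controls the $B^1_{\infty,q}$ norm of the velocity. The difference in $C^{\alpha'}$ controls the $B^{\alpha'}_{\infty,\infty}$ norm of the vorticity, which embeds into $B^0_{\infty,q}$ only with a constant blowing up as $\alpha'\to0$. Since $\alpha$ is fixed at this stage, this costs only an $\alpha$-dependent $\epsilon$, which is harmless. I would therefore fix $\alpha'=\alpha/2$ and accept constants depending on $\alpha$.
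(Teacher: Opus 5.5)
Your proposal is correct and is essentially the paper's argument. The paper derives the corollary in one line from two facts: the initial $C^{\alpha}$ vorticity can be approximated by smooth data, and $C^{\alpha}\subset B^{0}_{\infty,q}$ (combined with boundedness of the Riesz transforms). Your Steps 1--3 carry out exactly that reduction, and they usefully fill in two things the paper leaves implicit. First, the approximation holds only in $C^{\alpha'}$, $\alpha'<\alpha$, since mollifiers do not converge in $C^{\alpha}$ for the $r^{\alpha}$ profile. Second, the solution must be stable on $[0,T(\alpha)]$.

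One correction to Step 2. A direct $C^{\alpha'}$ estimate for $\omega^{\epsilon}-\omega$ does not close, because the forcing $(u^{\epsilon}-u)\cdot\nabla\omega$ loses a derivative. You should instead close the difference estimate in a weaker norm, for example the $L^{2}$ energy estimate for $u^{\epsilon}-u$, which lies in $L^{2}$ because $\omega^{\epsilon}-\omega$ has zero mean. Then interpolate against the uniform $C^{\alpha}$ bounds. The resulting constants depend on $\alpha$ only through the choice of $\epsilon$.
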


\section*{Acknowledgements}
I would like to thank Professor Tarek M. Elgindi. I am grateful for all his important suggestions and feedback. I would also like to thank Professor Theodore D. Drivas for his valuable feedback.

\vskip 0.3 cm

Mathematics Department, Stony Brook University, Stony Brook NY, 11794
 
\textit{Email address:} karim.shikhkhalil@stonybrook.edu

\end{document}